\newcommand{\Ccal}{{\mathcal C}}
\newcommand{\cC}{{\mathcal C}}
\newcommand{\Dcal}{{\mathcal D}}
\newcommand{\cD}{{\mathcal D}}
\newcommand{\CC}{\mathbb{C}}
\newcommand{\NN}{\mathbb{N}}
\newcommand{\RR}{\mathbb{R}}
\def\onto{\twoheadrightarrow}           
\newcommand{\mcm}[3]{\newcommand{#1}[#2]{{\ensuremath{#3}}}} 
\mcm{\restric}{0}{\upharpoonright}
\DeclareMathOperator{\Fuzz}{Fuzz}
\DeclareMathOperator{\FuzzGrRings}{Fuzz^\prime}
\newcommand\N{{\mathbb N}}
\numberwithin{equation}{section}
\newtheorem{theorem}[equation]{Theorem}
\newtheorem{lemma}[equation]{Lemma}
\newtheorem{proposition}[equation]{Proposition}
\newtheorem{corollary}[equation]{Corollary}
\newtheorem{cor}[equation]{Corollary}
\newtheorem*{theorem*}{Theorem}
\theoremstyle{definition}
\newtheorem{defn}[equation]{Definition}
\newtheorem{example}[equation]{Example}
\theoremstyle{remark}
\newtheorem{remark}[equation]{Remark}
\newcommand{\involution}{\overline}
\newcommand{\noinvolution}{}
\begin{document}
\title{Matroids over Partial Hyperstructures}
\author{Matthew Baker}
\email{mbaker@math.gatech.edu}
\address{School of Mathematics,
          Georgia Institute of Technology, USA}
\author{Nathan Bowler}
\email{Nathan.Bowler@uni-hamburg.de}
\address{Department of Mathematics, Universit{\"a}t Hamburg, Germany}

\date{\today}

\thanks{The first author's research was supported by the National Science Foundation research grant DMS-1529573.}

\begin{abstract}
We present an algebraic framework which simultaneously generalizes the notion of linear subspaces, matroids, valuated matroids, oriented matroids, 
and regular matroids.  
To do this, we first introduce algebraic objects which we call {\em tracts}; they generalize both hyperfields in the sense of Krasner and partial fields in the sense of Semple and Whittle.
We then define matroids over tracts; in fact, there are (at least) two natural notions of matroid in this general context, which we call {\em weak} and {\em strong} matroids.
We give ``cryptomorphic'' axiom systems for such matroids in terms of circuits, Grassmann-Pl{\"u}cker functions, and dual pairs, and establish some 
basic duality results.
We then explore sufficient criteria for the notions of weak and strong matroids to coincide.  This is the case whenever vectors and covectors are orthogonal, and is closely related to the notion of ``perfect fuzzy rings'' from \cite{DressWenzelPM}.
For example, if $F$ is a particularly nice kind of tract called a doubly distributive partial hyperfield, we show that the notions of weak and strong matroid over $F$ coincide.
Our theory of matroids over tracts is closely related to but more general than
``matroids over fuzzy rings'' in the sense of Dress and Dress--Wenzel \cite{Dress,DressWenzelGP,DressWenzelVM,DressWenzelPM}.
\end{abstract}

\maketitle

\section{Introduction} \label{sec:intro}

Matroid theory is a remarkably rich part of combinatorics with links to algebraic geometry, optimization, and many other areas of mathematics. Matroids provide a useful abstraction of the notion of linear independence in vector spaces, and can be thought of as combinatorial analogues of linear subspaces of $K^m$, where $K$ is a field.
A key feature of matroids is that they possess a duality theory which abstracts the concept of orthogonal complementation from linear algebra.
There are a number of important enhancements of the notion of matroid, including oriented matroids, valuated matroids, and regular matroids.
In this paper, we provide an algebraic framework for unifying all of these enhancements, introducing what we call {\bf matroids over tracts}.
Examples of tracts include hyperfields\footnote{For the reader's convenience, there is a self-contained version of the present paper written in the more specialized language of hyperfields available on the arXiv, see \cite{BakerBowlerHyperfield}.} in the sense of Krasner, partial fields in the sense of Semple and Whittle \cite{SempleWhittle}, and fuzzy rings in the sense of Dress \cite{Dress}, cf.~\S\ref{sec:fuzzyringtract} below.

\medskip

It turns out that there are (at least) two natural notions of matroids over a tract $F$, which we call {\bf weak $F$-matroids} and {\bf strong $F$-matroids}.
In this paper we give ``cryptomorphic'' axiom systems for both kinds of $F$-matroids and present examples showing that the two notions of $F$-matroid diverge for certain tracts (which can be taken to be hyperfields) $F$.  
On the other hand, if $F$ is a {\bf doubly distributive partial hyperfield}, we show that the notions of weak and strong $F$-matroid coincide.

\subsection{Tracts}

Our basic algebraic object in this paper is what we call a {\bf tract}.
A tract is an abelian 
group $G$ (written multiplicatively), together with an {\bf additive relation structure} on $G$, which is a subset $N_G$ of the group semiring $\NN[G]$ satisfying:
\begin{itemize}
\item[(T0)] The zero element of $\NN[G]$ belongs to $N_G$.
\item[(T1)] The identity element $1$ of $G$ is not in $N_G$.
\item[(T2)] There is a unique element $\epsilon$ of $G$ with $1 + \epsilon \in N_G$.
\item[(T3)] $N_G$ is closed under the natural action of $G$ on $\NN[G]$.
\end{itemize}  

One thinks of $N_G$ as those linear combinations of elements of $G$ which ``sum to zero'' (the $N$ in $N_G$ stands for ``null set'').

\medskip

We let $F = G \cup \{ 0 \} \subset \NN[G]$, and we often refer to the tract $(G,N_G)$ simply as $F$.  
(This is similar to thinking of a field $K$ set-theoretically as its multiplicative group $K^\times$ together with an additional element called $0$.)
We will sometimes write $F^\times$ instead of $G$.

\medskip

\begin{lemma} \label{lem:negatives}
Let $F=(G,N_G)$ be a tract.
\begin{itemize}
\item[(a)] If $x,y \in G$ satisfy $x+y \in N_G$, then $y = \epsilon x$.  
\item[(b)] $\epsilon^2 = 1$.
\item[(c)] $G \cap N_G = \emptyset$.
\end{itemize}
\end{lemma}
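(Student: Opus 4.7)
The plan is that all three assertions follow quickly from the axioms (T1), (T2), (T3), by systematically exploiting the $G$-action on $\NN[G]$ to normalize elements of $N_G$ so that uniqueness in (T2) can be applied.

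For (a), I would start with $x+y \in N_G$ and multiply through by $x^{-1}$ using the action guaranteed by (T3); this yields $1 + x^{-1}y \in N_G$. Since $x^{-1}y \in G$, the uniqueness clause in (T2) forces $x^{-1}y = \epsilon$, hence $y = \epsilon x$.

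For (b), I would again apply (T3) to the element $1 + \epsilon \in N_G$, this time multiplying by $\epsilon^{-1}$, to obtain $\epsilon^{-1} + 1 = 1 + \epsilon^{-1} \in N_G$. The uniqueness clause of (T2) then identifies $\epsilon^{-1}$ with $\epsilon$, giving $\epsilon^2 = 1$. Alternatively, one can derive (b) directly from (a) by noting that multiplying $1+\epsilon$ by $\epsilon^{-1}$ yields $\epsilon^{-1} + 1 \in N_G$ and applying (a) with $x = 1$, $y = \epsilon^{-1}$, but the uniqueness approach is cleanest.

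For (c), suppose for contradiction that $g \in G \cap N_G$. Viewing $g$ as the monomial $1 \cdot g \in \NN[G]$ and acting by $g^{-1}$ via (T3) shows $1 \in N_G$, directly contradicting (T1). Hence $G \cap N_G = \emptyset$.

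I do not expect any real obstacle: each part is a one-line manipulation using the $G$-action to shift an element of $N_G$ into a form where (T1) or the uniqueness portion of (T2) applies. The only minor subtlety worth flagging is that in (c) one must be careful to read the group element $g$ as a monomial in $\NN[G]$ before applying the action, so that the conclusion lands at $1 \in N_G$ rather than merely at some other element of $G \cap N_G$.
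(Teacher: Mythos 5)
Your proposal is correct and follows essentially the same route as the paper: part (a) by acting with $x^{-1}$ and invoking uniqueness in (T2), part (b) by normalizing $1+\epsilon$ (the paper phrases this as applying (a) with $x=\epsilon$, $y=1$, which amounts to the same multiplication by $\epsilon^{-1}$), and part (c) by acting with $g^{-1}$ to contradict (T1). No gaps.
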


\begin{proof}
For (a), we have $(x+y) x^{-1} = 1 + yx^{-1} \in N_G$ so $yx^{-1} = \epsilon$ by (T2).  Thus $y = \epsilon x$.

For (b), apply (a) with $x=\epsilon$ and $y=1$ to the identity $1 + \epsilon \in N_G$.

For (c), note that if $g \in G \cap N_G$ then by (T3) $g^{-1}g = 1 \in N_G$, contradicting (T1).
\end{proof}

Because of Lemma~\ref{lem:negatives}, we often write $-1$ instead of $\epsilon$ and $-x$ instead of $\epsilon x$.



\medskip

A {\bf homomorphism} $f : (G,N_G) \to (G',N_{G'})$ of tracts is a group homomorphism $f : G \to G'$, together with a map $f : \NN[G] \to \NN[G']$ satisfying $f(\sum a_i g_i) = \sum a_i f(g_i)$ for $a_i \in \NN$ and $g_i \in G$, such that if $\sum a_i g_i \in N_G$ then
$\sum a_i f(g_i) \in N_{G'}$.

\subsection{Examples} \label{sec:tractexamples}
Tracts are extremely flexible objects, as we will see in Section \ref{sec:examplestracts}. We will see there that they generalize hyperfields and partial fields, as well as a common generalization of the two which we call partial hyperfields. They also generalize the fuzzy rings of Dress and Wenzel.

A {\em hyperfield} is an algebraic structure akin to a field with 1, but where addition is allowed to be multivalued. (Multivalued algebraic operations might seem exotic, but in fact hyperrings and hyperfields appear quite naturally in a number of mathematical settings and their properties have been explored by numerous authors in recent years.)  There is still a notion of additive inverse, but rather than requiring that $x$ plus $-x$ equals $0$,
one merely assumes that $0$ {\em belongs to the set} ``$x$ plus $-x$''. 

The notion of {\em partial field} was introduced by Semple and Whittle in \cite{SempleWhittle} as an algebraic framework for unifying various matroid representation theorems due to Tutte and Whittle.  It was further developed by Pendavingh and van Zwam in \cite{PvZLifts,PvZSkew}. {\em Fuzzy rings} were developed by Dress and Wenzel as an alternative algebraic framework for representing matroids. 

For each of these kinds of object we may define a corresponding tract, in such a way that representability over that tract is the same as representability over the original object (for hyperfields we take this as our definition of representability over the hyperfield, since this was not previously defined).

We can recover many familiar classes of matroids by considering representability over particular hyperfields. In Section \ref{sec:examplestracts} we will define certain hyperfields $\mathbb{K}$ (the {\em Krasner hyperfield}), $\mathbb{S}$ (the {\em hyperfield of signs}) and $\mathbb{T}$ (the {\em tropical hyperfield}). The classes of matroid representations over these are, respectively, all matroids, oriented matroids, and valuated matroids.

\subsection{Cryptomorphic axiomatizations}

Matroids famously admit a number of ``cryptomorphic'' descriptions, meaning that there are numerous axiom systems for them which turn out to be non-obviously equivalent. 
Two of the most useful cryptomorphic axiom systems for matroids (resp. oriented, valuated) are the descriptions in terms of {\em circuits} (resp. signed, valuated circuits) and {\em basis exchange axioms} (resp. chirotopes, valuated bases).  A third (less well-known but also very useful) cryptomorphic description in all of these contexts involves {\em dual pairs}.  We generalize all of these cryptomorphic descriptions (for both weak and strong matroids over tracts) with a single set of theorems and proofs.  

The circuit description of strong (resp.~weak) matroids over tracts is a bit technical to state, see \S\ref{sec:MatroidsOverTracts} for the precise definition.  Roughly speaking, though, if $F=(G,N_G)$ is a tract, a subset $\cC$ of $F^m$ not containing the zero-vector is the set of {\bf $F$-circuits of a weak matroid with coefficients in $F$}  if it is stable under scalar multiplication, satisfies a support-minimality condition, and obeys a {\em modular elimination law}.  (The {\bf support} of $X \in \cC$ is the set of all $i$ such that $X_i \neq 0$.)  
The ``modular elimination'' property means that
if the supports of $X,Y \in \cC$ are ``sufficiently close'' (in a precise poset-theoretic sense) and $X_i = -Y_i$ for some $i$, then one can find a ``quasi-sum'' $Z \in \cC$ with $Z_i=0$ and $X_j + Y_j - Z_j \in N_G$ for all $j$.
The underlying idea is that the $F$-circuits of an $F$-matroid behave like the set of support-minimal nonzero vectors in a linear subspace of a vector space.
The most subtle part of the definition is the restriction that the supports of $X$ and $Y$ be sufficiently close; this restriction is not encountered ``classically'' when working with matroids, oriented matroids, or valuated matroids, but it is necessary in the general context in which we work, as has already been demonstrated by Anderson and Delucchi in their work on phased matroids \cite{AndersonDelucchi}.  They give an example of a phased matroid which satisfies modular elimination but not a more robust elimination property.  
In  \S\ref{sec:MatroidsOverTracts} we also present a stronger and somewhat more technical set of conditions characterizing 
the set of $F$-circuits of a {\bf strong} $F$-matroid.

In the general context of matroids over tracts, the simplest and most useful way to state the ``basis exchange'' or chirotope / phirotope axioms is in terms of what we call {\em Grassmann-Pl{\"u}cker functions}.  
A nonzero function $\varphi : F^r \to F$ is called a {\bf Grassmann-Pl{\"u}cker function} if it is alternating and satisfies (tract analogues of) the basic algebraic identities satisfied by the determinants of the $(r \times r)$-minors of an $r \times m$ matrix of rank $r$ (see \S\ref{sec:GPsection} for a precise definition).
By a rather complicated argument, the definition of strong $F$-matroids in terms of strong $F$-circuits turns out to be cryptomorphically equivalent to the definition in terms of Grassmann-Pl{\"u}cker functions.
We also define {\em weak Grassmann-Pl{\"u}cker functions} and relate them to weak $F$-circuits.

The ``dual pair'' description of $F$-matroids is perhaps the easiest one to describe in a non-technical way, assuming that one already knows what a matroid is.  If $\underline{M}$ is a matroid in the usual sense, we call a subset $\cC$ of $F^m$ not containing $0$ and closed under nonzero scalar
multiplication an {\bf $F$-signature} of $\underline{M}$ if the support of $\cC$ in $E=\{ 1,\ldots, m \}$ is the set of circuits of $\underline{M}$.  The {\bf inner product} of two vectors $X,Y \in F^m$ 
is $X \cdot Y := \sum_{i=1}^m X_i Y_i$, 
and we call $X$ and $Y$ {\bf orthogonal} (written $X \perp Y$) if $X \cdot Y \in N_G$.
A pair $(\cC,\cD)$ consisting of an $F$-signature $\cC$ of $\underline{M}$ and an $F$-signature $\cD$ of the dual matroid $\underline{M}^*$ is called a {\bf dual pair} if $X \perp Y$ for all $X \in \cC$ and $Y \in \cD$.  By a rather complex chain of reasoning, it turns out that a strong $F$-matroid in either of the above two senses is equivalent to a dual pair $(\cC,\cD)$ as above.  We also define {\em weak dual pairs} and relate them to weak $F$-circuits and weak Grassmann-Pl{\"u}cker functions.

In the recent preprint \cite{AndersonVectors}, Laura Anderson proves that strong matroids over tracts can be characterized in terms of a cryptomorphically equivalent set of {\em vector axioms}.

\subsection{Duality}

If $\cC$ is the collection of strong $F$-circuits of an $F$-matroid $M$ and $(\cC,\cD)$ is a dual pair of $F$-signatures of the matroid $\underline{M}$ underlying $M$ (whose circuits are the supports of the $F$-circuits of $M$), it turns out that $\cD$ is precisely the set of (non-empty) support-minimal elements of the orthogonal complement of $\cC$ in $F^m$, and $\cD$ forms the set of $F$-circuits
of a strong $F$-matroid $M^*$ which we call the {\bf dual strong matroid}.

Duality behaves as one would hope: for example $M^{**}=M$, duality is compatible in the expected way with the notions of deletion and contraction, and the underlying matroid of the dual is the dual of the underlying matroid. 
There is a similar, and similarly behaved, notion of duality for weak $F$-matroids.

Matroids over tracts admit a useful push-forward operation: given a homomorphism of tracts $f : F \to F'$ and a strong (resp. weak) $F$-matroid $M$, there is an induced strong (resp. weak) $F'$-matroid $f_* M$ which can be defined using any of the cryptomorphically equivalent 
axiomatizations.  The ``underlying matroid'' construction coincides with the push-forward of an $F$-matroid $M$ to the Krasner hyperfield ${\mathbb K}$ (identified with the corresponding tract) via the canonical homomorphism $\psi : F \to {\mathbb K}$ sending $0$ to $0$ and every $g \in F^\times$ to $1$.  

If $\sigma : {\mathbb R} \to {\mathbb S}$ is the map taking a real number to its sign and $W \subseteq {\mathbb R}^m$ is a linear subspace (considered in the natural way as an ${\mathbb R}$-matroid), the push-forward $\sigma_*(W)$ coincides with the oriented matroid which one traditionally associates to $W$.  Similarly, if $v : K \to {\mathbb T}$ is the valuation on a non-Archimedean field and $W \subseteq K^m$ is a linear subspace,
$v_*(W)$ is just the {\bf tropicalization} of $W$ considered as a {\em valuated matroid} (cf.~\cite{MaclaganSturmfels}).  
%

\subsection{Relation to the work of Dress and Wenzel}

In \cite{Dress}, Andreas Dress introduced the notion of a {\bf fuzzy ring} and defined matroids over such a structure, showing that linear subspaces, matroids in the usual sense, and oriented matroids are all examples of matroids over a fuzzy ring.
In \cite{DressWenzelVM}, Dress and Wenzel introduced the notion of {\em valuated matroids} as a special case of matroids over a fuzzy ring.  
The results of Dress and Wenzel in \cite{Dress,DressWenzelGP,DressWenzelVM} include a duality theorem and a cryptomorphic characterization
of matroids over fuzzy rings in terms of Grassmann-Pl{\"u}cker functions.  (They also work with possibly infinite ground sets, whereas for simplicity we restrict ourselves to the finite case.)  

Our work generalizes theirs. In addition to the fact that tracts generalize fuzzy rings (see \S\ref{sec:fuzzyringtract} above),
we provide cryptomorphic characterizations of matroids in terms of circuits and dual pairs, which one does not find explicitly in the work of Dress--Wenzel.
Our work also has the advantage that (matroids over) tracts are arguably simpler and more intuitive to work with than (matroids over) fuzzy rings.


{We use theorems of Dress and Wenzel from \cite{DressWenzelPM} to show that if $F$ is a {\em doubly distributive partial hyperfield} (or, more generally, a {\em perfect tract}, cf.~\S{\ref{sec:perfect}} for the definition), the notions of weak and strong $F$-matroid coincide.}

\subsection{Relation to the work of Anderson and Delucchi}

While the proofs of our main theorems are somewhat long and technical, in principle a great deal of the hard work has already been done in \cite{AndersonDelucchi}, so on a number of occasions we merely point out that a certain proof from \cite{AndersonDelucchi} goes through {\em mutatis mutandis} in the general setting of matroids over tracts.  (By way of contrast, the proofs in the standard works on oriented and valuated matroids tend to rely on special properties of the sign and tropical hyperfields which do not readily generalize.)


\subsection{Other related work}

Despite the formal similarity in their titles, the theory in this paper generalizes matroids in a rather different way from the paper ``Matroids over a Ring'' by Fink and Moci \cite{FinkMoci}.  For example, if $K$ is any field, a matroid over $K$ in the sense of Fink--Moci is just a matroid in the usual sense (independent of $K$), while for us a matroid over $K$ is a linear subspace of $K^m$.  


The thesis of Bart Frenk \cite{Frenk} deals with matroids over certain kinds of algebraic objects which he calls tropical semifields; these are defined as sub-semifields of ${\mathbb R} \cup \{ \infty \}$.  Matroids over tropical semifields include, as special cases, both matroids in the traditional sense and valuated matroids, but not for example oriented matroids, linear subspaces of $K^m$ for a field $K$, or phased matroids.  Tropical semifields are a particular special case of idempotent semifields, and matroids over the latter are the subject of an interesting recent paper by the Giansiracusa brothers \cite{GiansiracusaGrassmann}.  

There is also a close connection between the tropical hyperfield ${\mathbb T}$ and the ``supertropical semiring'' of Izhakian--Rowen \cite{IRsupertropicalalgebra, IRmatrixalgebra}; roughly speaking, the map sending a ghost element of the supertropical semiring to the set of all tangible elements less than or equal to it identifies the two structures.

\subsection{A note on previous arXiv versions}
\label{sec:meaculpa}

This paper is a generalization to tracts of \cite{BakerBowlerHyperfield}, which is written in the more restrictive context of hyperfields.
In arXiv versions 1 through 3 of \cite{BakerBowlerHyperfield} (in which the first author was the sole author), there is a serious error which is related to the gap in \cite{AndersonDelucchi} mentioned above.  The second author noticed this mistake and found the counterexample discussed in \S\ref{sec:counterexample} below.  This made it clear that there are in fact at least two distinct notions of matroids over hyperfields (which we call ``weak'' and ``strong''), each of which admits a number of cryptomorphically equivalent axiomatizations.  The present version of the paper is our attempt to  correctly paint the landscape of matroids over hyperfields, as well as the corresponding generalization to tracts.

The problem with the previous versions of the present work occurs in the proof of Theorem 6.19 on page 29 of arXiv version 3. Shortly before the end of the proof, one finds the equation 
\[
X(e) \odot Y(e) = - X'(e) \odot Y(e) = \bigboxplus_{g \neq e} X'(g) \odot Y(g).
\] 
However, the term on the right is a set rather than a single element\footnote{When $|\underline{X} \cap \underline{Y}| \leq 3$, the proof of Theorem 6.19 goes through because in that case the hypersum $\bigboxplus_{g \neq e} X'(g) \odot Y(g)$ is single-valued (as there is just one element other than $e$ in $\underline{X}' \cap \underline{Y}$).} so the second equality sign should be $\in$ rather than $=$.
Unfortunately, this containment is not sufficient to give the desired result; indeed, the ``desired result'' is false as shown in \S\ref{sec:counterexample} below.

\subsection{Structure of the paper}

In Section~\ref{sec:examplestracts} we explain the algebraic structures which give the main motivation for tracts.
In Section~\ref{sec:MatroidsOverTracts} we present different ``cryptomorphic'' axiom systems for strong and weak matroids over tracts, 
and state the main results of duality theory.  
We also discuss (in Section~\ref{sec:counterexample}) some examples of weak $F$-matroids which are not strong, and (in Section~\ref{sec:functoriality}) push-forward operations on $F$-matroids.
We conclude the section by showing that weak and strong $F$-matroids coincide over perfect tracts, and that doubly distributive partial hyperfields are perfect.
Proofs of the main theorems are deferred to Section~\ref{sec:proofsection}.
There are two brief Appendices at the end of the paper: in Appendix~\ref{sec:errata} we collect some errata from \cite{AndersonDelucchi}, and in 
Appendix~\ref{sec:Lorscheid} we present a simplified point of view on fuzzy rings written by Oliver Lorscheid.

\subsection{Acknowledgments}
The first author would like to thank Felipe Rincon, Eric Katz, Oliver Lorscheid, and Ravi Vakil for useful conversations.
He also thanks Dustin Cartwright, Alex Fink, Felipe Rincon, and an anonymous referee for pointing out some minor mistakes in the first arXiv version of this paper. Finally, he thanks Sam Payne and Rudi Pendavingh and two anonymous referees for helpful comments, and Louis Rowen for explaining the connection to his work with Izhakian and Knebusch. 

We are also grateful to Masahiko Yoshinaga for pointing out a problem with an earlier version of Remark~\ref{rmk:inducedhyperaddition}, to Ting Su for suggesting improvements to the proof of Theorem \ref{thm:Prop5.6} and to Daniel Wei\ss auer for finding the counterexample given as Example~\ref{eg:danscex}. 
{ We are especially grateful to Laura Anderson for her detailed feedback on all the various drafts of this paper, and to Ting Su and an anonymous referee for additional corrections and suggestions.
We also thank Oliver Lorscheid for contributing Appendix~\ref{sec:Lorscheid}.}

\section{Examples of tracts}\label{sec:examplestracts}

In this section, we will explain some of the motivating examples of tracts. The tract axioms (T0)-(T3) are motivated by the fact that they appear to be precisely the properties needed in order to establish the basic cryptomorphisms of matroid theory.
Note, however, that many of the tracts in this section satisfy somewhat stronger properties.  For example, $N_G$ is frequenty an {\bf ideal} in $\NN[G]$, closed under addition (and therefore, by (T3), under multiplication by arbitrary elements of $\NN[G]$). 
Our first example lacks these nice properties, and illustrates the freedom allowed by our definition.

\begin{example}
The {\bf initial tract} ${\mathbb I}$ is defined to be $(G=\{-1,1\}, N_G=\{0, 1 + (-1)\})$, with the multiplication on $G$ being the usual one. Our terminology arises from the fact that ${\mathbb I}$ is the initial object in the category whose objects are tracts and whose maps are tract homomorphisms.
\end{example}

\subsection{Hyperrings and hyperfields} \label{sec:hyperfields}

 A hypergroup (resp. hyperring, hyperfield) is an algebraic structure similar to a group (resp. ring, field) except that addition is multivalued.  
More precisely, addition in a hypergroup is a {\bf hyperoperation} on a set $S$, i.e., a map $\boxplus$ from $S \times S$ to the collection of non-empty subsets of $S$.
All hyperoperations in this paper will be {\em commutative}, though the non-commutative case is certainly interesting as well.


If $A,B$ are non-empty subsets of $S$, we define
\[
A \boxplus B := \bigcup_{a \in A, b \in B} (a \boxplus b)
\]
and we say that $\boxplus$ is {\bf associative} if $a \boxplus (b \boxplus c) = (a \boxplus b) \boxplus c$ for all $a,b,c \in S$.

Given an associative hyperoperation $\boxplus$, we define the hypersum $x_1 \boxplus \cdots \boxplus x_m$ of $x_1,\ldots,x_m$ for $m \geq 2$ recursively by the formula
\[
x_1 \boxplus \cdots \boxplus x_m := \bigcup_{x' \in x_2 \boxplus \cdots \boxplus x_m} x_1 \boxplus x'.
\]

\begin{defn} \label{def:hypergroup}
A {\bf hypergroup} is a tuple $(G,\boxplus,0)$, where $\boxplus$ is an associative hyperoperation on $G$ such that:
\begin{itemize}
\item (H0) $0 \boxplus x = \{ x \}$ for all $x \in G$.
\item (H1) For every $x \in G$ there is a unique element of $G$ (denoted $-x$ and called the {\bf hyperinverse} of $x$) such that $0 \in x\boxplus -x$.
\item (H2) $x \in y \boxplus z$ if and only if $z \in x \boxplus (-y)$.
\end{itemize}
\end{defn}

\begin{remark}
Axiom (H2) is called {\em reversibility}, and in the literature a hypergroup is often only required to satisfy (H0) and (H1); a hypergroup satisfying (H2) is called a {\em canonical hypergroup}.  Since we will deal only with hypergroups satisfying (H2), we will drop the (old-fashioned sounding) adjective `canonical'.
\end{remark}



\begin{defn} \label{def:hyperring}
A {\bf hyperring} is a tuple $(R,\odot,\boxplus,1,0)$ such that:
\begin{itemize}
\item $(R,\odot,1)$ is a commutative monoid.
\item $(R,\boxplus,0)$ is a a commutative hypergroup.
\item (Absorption rule) $0 \odot x = x \odot 0 = 0$ for all $x \in R$.
\item (Distributive Law) $a \odot (x \boxplus y) = (a \odot x) \boxplus (a \odot y)$ for all $a,x,y \in R$, and similarly for right-multiplication.
\end{itemize}
\end{defn}

As usual, we will denote a hyperring by its underlying set $R$ when no confusion will arise.
Note that any unital ring $R$ may be considered in a trivial way as a hyperring.
We will often write $xy$ (resp. $x/y$) instead of $x \odot y$ (resp. $y^{-1} \odot x$) if there is no risk of confusion.

\begin{remark}
Our notion of hyperring is sometimes called a {\em Krasner hyperring} in the literature; it is a special case of a more general class of algebraic structures in which one allows multiplication to be multivalued as well.  
Since we will not make use of more general hyperrings in this paper, and since (following \cite{ConnesConsani}) we will use the term `Krasner hyperfield' for something different (see Example~\ref{ex:KrasnerHyperfield} below), we will not use the term `Krasner hyperring'.
\end{remark}

\begin{remark}
If we just require $(R,\boxplus,0)$ in Definition~\ref{def:hyperring} to satisfy (H0) and (H1), it follows automatically from the distributive law that it also satisfies (H2).
\end{remark}

\begin{remark} \label{rmk:inducedhyperaddition}
If $R$ is a commutative ring with $1$ and $G$ is a subgroup of the group $R^\times$ of units in $R$, then the set $R/G$ of orbits for the action of $G$ on $R$ by multiplication has a natural hyperring structure (cf.~\cite[Proposition 2.5]{ConnesConsani}), given by taking an orbit to be in the hypersum of two others if it is a subset of their setwise sum.
\end{remark}

\begin{defn}
A hyperring $F$ is called a {\bf hyperfield} if $0 \neq 1$ and every non-zero element of $F$ has a multiplicative inverse.  
\end{defn}

\subsection{Examples}

We now give some examples of hyperfields which will be important to us in the sequel.

\begin{example}
(Fields) If $F=K$ is a field, then $F$ can be trivially considered as a hyperfield by setting $a \odot b = a \cdot b$ and $a \boxplus b = \{ a+b \}$.
\end{example}

\begin{example} \label{ex:KrasnerHyperfield}
(Krasner hyperfield) Let ${\mathbb K} =  \{ 0,1 \}$ with the usual multiplication rule, but with hyperaddition defined by
$0\boxplus x=x\boxplus 0=\{x \}$ for $x=0,1$ and $1\boxplus 1 = \{ 0,1 \}$.  Then ${\mathbb K}$ is a hyperfield, called the {\bf Krasner hyperfield} by Connes and Consani in \cite{ConnesConsani}.
This is the hyperfield structure on $\{ 0, 1 \}$ induced (in the sense of Remark~\ref{rmk:inducedhyperaddition}) by the field structure on $F$, for any field $F$, with respect to the trivial valuation $v : F \to \{ 0,1 \}$ sending $0$ to $0$ and all non-zero elements to $1$.
\end{example}

\begin{example}
(Tropical hyperfield) Let ${\mathbb T}_+ := {\mathbb R} \cup \{ -\infty \}$, and for $a,b \in {\mathbb T}_+$ define $a\cdot b = a+b$ (with $-\infty$ as an absorbing element).  
The hyperaddition law is defined by setting $a \boxplus b = \{ \max (a,b) \}$ if $a \neq b$ and $a \boxplus b = \{ c \in {\mathbb T}_+ \; | \; c \leq a \}$ if $a = b$.  (Here we use the standard total order on ${\mathbb R}$ and set $-\infty \leq x$ for all $x \in {\mathbb R}$.)  Then ${\mathbb T}_+$ is a hyperfield, called the {\bf tropical hyperfield}.
The additive hyperidentity is $-\infty$ and the multiplicative identity is $0$.
Because it can be confusing that $0,1 \in {\mathbb R}$ are not the additive (resp. multiplicative) identity elements in ${\mathbb T}_+$, we will work instead with the isomorphic hyperfield ${\mathbb T} := {\mathbb R}_{\geq 0}$ in which $0,1 \in {\mathbb R}$ are the additive (resp. multiplicative) identity elements and multiplication is the
usual multiplication.  Hyperaddition is defined so that the map ${\rm exp}: {\mathbb T}_+ \to {\mathbb T}$ is an isomorphism of hyperfields.
\end{example}

\begin{example}
(Valuative hyperfields) More generally, if $\Gamma$ is any totally ordered abelian group (written multiplicatively), there is a canonical hyperfield structure on $\Gamma \cup \{ 0 \}$ defined in a similar way as for ${\mathbb T}$.
The hyperfield structure on $\Gamma \cup \{ 0 \}$ is induced from that on $F$ by $\| \cdot \|$ for any surjective norm $\| \cdot \| : F \onto \Gamma \cup \{ 0 \}$ on a field $F$.
We call a hyperfield which arises in this way a {\bf valuative hyperfield}.  In particular, both ${\mathbb K}$ and ${\mathbb T}$ are valuative hyperfields.
\end{example}

\begin{example}
(Hyperfield of signs) Let ${\mathbb S} := \{ 0, 1, -1 \}$ with the usual multiplication law, and hyperaddition defined by $1 \boxplus 1 = \{ 1 \}$, $-1 \boxplus -1 = \{ -1 \}$, $x \boxplus 0 = 0 \boxplus x = \{ x \}$, and $1 \boxplus -1 = -1 \boxplus 1 = \{ 0, 1, -1 \}$.  Then ${\mathbb S}$ is a hyperfield, called the {\bf hyperfield of signs}. 
The hyperfield structure on $\{ 0,1,-1 \}$ is induced from that on ${\mathbb R}$ by the map $\sigma : {\mathbb R} \to \{ 0, 1, -1 \}$ taking $0$ to $0$ and a nonzero real number to its sign.  
\end{example}

\begin{example}
(Weak hyperfields and the weak hyperfield of signs) 
For any abelian group $G$ and any self-inverse element $\epsilon$ of $G$, there is a hyperfield $W(G, \epsilon)$ given as follows: the underlying set is $G \cup \{0\}$, the multiplication is given by that of $G$ together with the rule $0 \cdot x = 0$, and the hyperaddition is given by $0 \boxplus x = \{x\}$, $x \boxplus (\epsilon \cdot x) = G \cup \{0\}$, and $x \boxplus y = G$ for any nonzero $x$ and $y$ with $y \neq \epsilon \cdot x$. It is easy to check that this really does give a hyperfield; for example both sides of the equation for associativity evaluate to $G \cup \{0\}$ if all summands are nonzero. We shall call such hyperfields {\em weak hyperfields}.

A particularly important example is the {\em weak hyperfield of signs} ${\mathbb W} = W(\{ 1,-1 \}, -1)$.
The underlying multiplicative monoid of ${\mathbb W}$ is the same as for ${\mathbb S}$.
The hyperfield structure on $\{ 0,1,-1 \}$ is induced from that on ${\mathbb F}_p$ by the map $\sigma : {\mathbb F}_p \to \{ 0, 1, -1 \}$ taking $0$ to $0$, all squares to 1 and all nonsquares to $-1$ for any prime number $p>3$ congruent to 3 modulo 4.  
\end{example}

\begin{example}
(Phase hyperfield) Let ${\mathbb P} := S^1 \cup \{ 0 \}$, where $S^1 = \{ z \in {\mathbb C} \; | \; |z|=1 \}$ is the complex unit circle.  Multiplication is defined as usual, and the hyperaddition law is defined for $x,y \neq 0$ by setting $x \boxplus -x := \{ 0, x, -x \}$ and 
$x \boxplus y := \{ \frac{\alpha x + \beta y}{\| \alpha x + \beta y \|} \; | \; \alpha, \beta \in {\mathbb R}_{>0} \}$ otherwise.
The hyperfield structure on $S^1 \cup \{ 0 \}$ is induced from that on ${\mathbb C}$ by the map $p : {\mathbb R} \to S^1 \cup \{ 0 \}$ taking $0$ to $0$ and a nonzero complex number $z$ to its phase $z / |z| \in S^1$.
\end{example}

Many other interesting examples of hyperstructures are given in Viro's papers  \cite{ViroDequant,Viro} and the papers \cite{ConnesConsaniAbsolute,ConnesConsani} of Connes and Consani.  Here are a couple of examples taken from these papers:

\begin{example}
\label{ex:triangle}
(Triangle hyperfield) Let ${\mathbb V}$ be the set ${\mathbb R}_{\geq 0}$ of nonnegative real numbers with the usual multiplication and the hyperaddition rule
\[
a \boxplus b := \{ c \in {\mathbb R}_{\geq 0} \; : \; |a-b| \leq c \leq a+b \}.
\]
(In other words, $a \boxplus b$ is the set of all real numbers $c$ such that there exists a Euclidean triangle with side  lengths $a, b, c$.)
Then ${\mathbb V}$ is a hyperfield, closely related to the notion of {\em Litvinov-Maslov dequantization} (cf.~\cite[\S{9}]{ViroDequant}).
\end{example}

\begin{example}
(Ad{\`e}le class hyperring) If $K$ is a global field and $A_K$ is its ring of ad{\`e}les, the commutative monoid $A_K / K^\times$ (which plays an important role in Connes' conjectural approach to proving the Riemann hypothesis) is naturally endowed with the structure of a hyperring by Remark~\ref{rmk:inducedhyperaddition}.  It is, moreover, an algebra over the Krasner hyperfield ${\mathbb K}$ in a natural way.
One of the interesting discoveries of Connes and Consani \cite{ConnesConsani} is that if $K$ is the function field of a curve $C$ over a finite field, 
the groupoid of prime elements of the hyperring $A_K / K^\times$ is canonically isomorphic to the loop groupoid of the maximal abelian cover of $C$.
\end{example}

\begin{remark}
There are examples of hyperfields which do not arise from the construction given in Remark~\ref{rmk:inducedhyperaddition}; see \cite{Massouros}.
\end{remark}

\subsection{The tract associated to a hyperfield}

A fundamental example of a tract is the tract associated to a hyperfield $K$, where we set $G = K \backslash \{ 0 \}$ and a formal sum 
$\sum_i a_i g_i \in \NN[G]$ with $a_i \in \NN$ and $g_i \in G$ belongs to $N_G$ if and only if $0 \in \boxplus_i a_i g_i$ in $K$.

With our general definition of matroids over tracts,\footnote{By a matroid over a hyperfield, we mean a matroid over the corresponding tract, and similarly for partial fields in the sense of \S\ref{sec:partialfields} below.}
we will find for example that:

\begin{itemize}
\item A (strong or weak) matroid over ${\mathbb S}$ is the same thing as an {\bf oriented matroid} in the sense of Bland--Las Vergnas \cite{BlandLasVergnas}.
\item A (strong or weak) matroid over ${\mathbb T}$ is the same thing as a {\bf valuated matroid} in the sense of Dress--Wenzel \cite{DressWenzelVM}.
\item There exists a weak matroid over ${\mathbb V}$ which is not a strong matroid.
\end{itemize}

Anderson and Delucchi consider aspects of both weak and strong matroids over ${\mathbb P}$ in \cite{AndersonDelucchi}, but there is a mistake in their proof that the circuit, Grassmann--Pl{\"u}cker, and dual pair axioms for phased matroids are all equivalent (cf.~Appendix~\ref{sec:errata}). A counterexample due to Daniel Wei\ss auer shows that weak ${\mathbb P}$-matroids are not the same thing as strong ${\mathbb P}$-matroids (see Example~\ref{eg:danscex}).


Both weak and strong matroids over tracts admit a duality theory which generalizes the existing duality theories in each of the above examples.  All known proofs of the basic duality theorems for oriented or valuated matroids are rather long and involved. 
One of our goals is to give a unified treatment of such duality results so that one only has to do the hard work once.

\subsection{Homomorphisms of hyperfields}

Our definition of homomorphisms of tracts is compatible with the usual definition of hyperfield homomorphisms with respect to the realization of hyperfields as tracts.
In order to make this precise, we recall the following:

\begin{defn}
A {\bf hypergroup homomorphism} is a map $f : G \to H$ such that $f(0)=0$ and $f(x \boxplus y) \subseteq f(x) \boxplus f(y)$ for all $x,y \in G$.

A {\bf hyperring homomorphism} is a map $f : R \to S$ which is a homomorphism of additive hypergroups as well as a homomorphism of multiplicative monoids (i.e.,  $f(1)=1$ and 
$ f(x \odot y)=f(x) \odot f(y)$ for $x,y \in R$).

A {\bf hyperfield homomorphism} is a homomorphism of the underlying hyperrings.  
\end{defn}

With these definitions and the construction of the tract associated to a hyperfield, it is not hard to see that the category of hyperfields is a full subcategory of
the category of tracts.  The main observation needed is the following lemma:

\begin{lemma}
If $F,F'$ are hyperfields and $f:F \to F'$ is a homomorphism of tracts, then for $x,y \in F$ we have $f(-x) = -f(x)$ and $f(x\boxplus y) \subseteq f(x) \boxplus f(y)$.
\end{lemma}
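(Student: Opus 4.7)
The plan is to handle the two claims in sequence, both by translating between the hyperfield's hyperaddition and the tract's null-set $N_G$, and then applying tract-homomorphism functoriality.

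For $f(-x)=-f(x)$, I would begin with the case $x=1$. The element $-1\in F$ is characterised, by axiom (T2) applied to the tract of $F$, as the unique element of $G = F\setminus\{0\}$ with $1+(-1)\in N_G$. Since $f$ is a tract homomorphism, $f(1+(-1))=1+f(-1)$ lies in $N_{G'}$, so the uniqueness clause of (T2) in the tract of $F'$ forces $f(-1)=-1$. For a general nonzero $x$, multiplicativity of $f$ on $G$ gives $f(-x)=f((-1)\cdot x) = (-1)\cdot f(x)=-f(x)$; the case $x=0$ is immediate since $f(0)=0$ by $\NN$-linearity.

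For the inclusion $f(x\boxplus y)\subseteq f(x)\boxplus f(y)$, suppose $z\in x\boxplus y$ with $x,y,z$ all nonzero. Iterated use of reversibility (H2) in $F$ shows this is equivalent to $0\in (-z)\boxplus x\boxplus y$, which is exactly the tract condition $(-z)+x+y\in N_G$ by the definition of $N_G$ given above the lemma. Applying the tract homomorphism, $f(-z)+f(x)+f(y)\in N_{G'}$, and substituting $f(-z)=-f(z)$ from the first part yields $(-f(z))+f(x)+f(y)\in N_{G'}$. Unpacking this back into hyperfield language gives $0\in -f(z)\boxplus f(x)\boxplus f(y)$ in $F'$, and one final application of reversibility in $F'$ produces $f(z)\in f(x)\boxplus f(y)$, as required.

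It remains only to handle the degenerate situations where one of $x,y,z$ is zero. If $x=0$ (or $y=0$), axiom (H0) forces $z=y$ (resp.\ $z=x$), and the containment is immediate. If $z=0$ then $0\in x\boxplus y$ forces $y=-x$ by (H1), and the conclusion $0\in f(x)\boxplus (-f(x))$ follows from (H1) in $F'$ (equivalently, from $x+y\in N_G$ combined with tract-functoriality and Lemma~\ref{lem:negatives}(a)). The only real obstacle is the bookkeeping involved in transferring between the hyperaddition statement $z\in x\boxplus y$ and the $N_G$-membership statement $(-z)+x+y\in N_G$, and in particular being careful that formal sums live in $\NN[G]$ (where $0$ is excluded) while hyperaddition is defined on all of $F$; once that dictionary is in hand, both claims reduce to direct applications of the tract axioms and the definition of a tract homomorphism.
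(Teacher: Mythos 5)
Your proposal is correct and follows essentially the same route as the paper's: translate the hyperaddition statement $z\in x\boxplus y$ into membership of $-z+x+y$ in $N_G$, push forward via the tract homomorphism, and translate back using reversibility in $F'$. The only cosmetic difference is in the first claim, where you prove $f(-1)=-1$ and then use multiplicativity (implicitly invoking the standard fact that $-x=(-1)\cdot x$ in a hyperfield, which is immediate from Lemma~\ref{lem:negatives}(a) or from distributivity), whereas the paper applies Lemma~\ref{lem:negatives}(a) directly to $f(-x)+f(x)\in N_{G'}$.
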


\begin{proof}
Let $G = F^\times$ and $G' = (F')^\times$.  If $x=0$ or $y=0$ the result is trivial, so we may assume that $x,y \in G$.
Since $-x + x \in N_G$ and $f$ is a homomorphism of tracts, $f(-x) + f(x) \in N_{G'}$, which by Lemma~\ref{lem:negatives} implies that $f(-x)=-f(x)$.
Similarly, if $z \in x \boxplus y$ then $0 \in -z \boxplus x \boxplus y$, which means that $-z + x + y \in N_G$.  
Thus $-f(z) + f(x) + f(y) \in N_{G'}$, which implies that $0 \in -f(z) \boxplus f(x) \boxplus f(y)$ and thus $f(z) \in f(x) \boxplus f(y)$.
\end{proof}



\begin{example}
A hyperring homomorphism from a commutative ring $R$ with $1$ to the Krasner hyperfield ${\mathbb K}$ (cf.~Example~\ref{ex:KrasnerHyperfield}) is the same thing as a prime ideal of $R$, via the correspondence ${\mathfrak p} := f^{-1}(0)$.
\end{example}

\begin{example}
A hyperring homomorphism from a commutative ring $R$ with $1$ to the tropical hyperfield ${\mathbb T}$ is the same thing as a prime ideal ${\mathfrak p}$ of $R$ together with a real valuation on the residue field of ${\mathfrak p}$ (i.e., the fraction field  of $R/{\mathfrak p}$).  
 (Similarly, a hyperring homomorphism from $R$ to $\Gamma \cup \{ 0 \}$ for some totally ordered abelian group $\Gamma$
is the same thing as a prime ideal ${\mathfrak p}$ of $R$ together with a Krull valuation on the residue field of ${\mathfrak p}$.)
In particular, a hyperring homomorphism from a field $K$ to ${\mathbb T}$ is the same thing as a real valuation on $K$.
These observations allow one to reformulate the basic definitions in Berkovich's theory of analytic spaces \cite{BerkovichBook} in terms of hyperrings, though we will not explore this further in the present paper.
\end{example}

\begin{example}
 A hyperring homomorphism from a commutative ring $R$ with $1$ to the hyperfield of signs ${\mathbb S}$ is the same thing as a prime ideal ${\mathfrak p}$ together with an ordering on the residue field of ${\mathfrak p}$ in the sense of ordered field theory (see e.g. \cite[\S{3}]{Marshall}).
In particular, a hyperring homomorphism from a field $K$ to ${\mathbb S}$ is the same thing as an ordering on $K$.
This observation allows one to reformulate the notion of {\em real spectrum} \cite{BasuPollackRoy,MarshallBook} in terms of hyperrings, and provides an interesting lens through which to view the analogy between Berkovich spaces and real spectra.
\end{example}

\subsection{Partial fields}  \label{sec:partialfields}

The following definition is taken from \cite[Definitions 2.1 and 3.1]{PvZSkew}:

\begin{defn}
A {\bf partial field} $P$ is a pair $(G,R)$ consisting of a commutative ring $R$ with $1$ and a subgroup $G$ of the group of units of $R$ such that:
\begin{itemize}
\item[(PF1)] $-1$ belongs to $G$.
\item[(PF2)] $G$ generates the ring $R$.
\end{itemize}
\end{defn}

Note that some authors omit axiom (PF2) and instead consider pairs $(F,R)$ as above modulo a certain equivalence relation which yield the same
objects.

Note also that a partial field with $G = R \backslash \{ 0 \}$ is the same thing as a {\em field}.

\begin{example}
(Partial fields) There are many interesting examples of partial fields given in \cite{PvZLifts}.  We mention for example the following:
\begin{itemize}
\item The {\bf regular partial field} ${\mathbb U}_0 := (\{\pm 1 \}, {\mathbb Z})$.
\item The {\bf dyadic partial field} ${\mathbb D} := (\langle -1,2 \rangle, {\mathbb Z}[\frac{1}{2}])$.
\end{itemize}
\end{example}

There are numerous classical theorems about representability of matroids which can be interpreted and/or enriched using the language of partial fields.  For example:

\begin{example} \label{ex:regular}
A matroid is called {\bf regular} if it is representable over every field.  By \cite{Tutte} (see also \cite[Theorem 2.29]{PvZLifts}) 
the following are equivalent:
\begin{enumerate}
\item $M$ is regular.
\item $M$ is representable over every partial field.
\item $M$ is representable over ${\rm GF}(2)$ and ${\rm GF}(3)$.
\item $M$ is representable over the partial field ${\mathbb U}_0$.
\end{enumerate}
\end{example}

\begin{example} \label{ex:dyadic}
A matroid is called {\bf dyadic} if it is representable over every field of characteristic different from $2$.  By \cite{Whittle} (see also \cite[Theorem 4.3]{PvZLifts}), the following are equivalent:
\begin{enumerate}
\item $M$ is dyadic.
\item $M$ is representable over ${\rm GF}(3)$ and ${\rm GF}(5)$.
\item $M$ is representable over the partial field ${\mathbb D}$.
\end{enumerate}
\end{example}






\medskip

We can associate a tract to a partial field $P=(G,R)$ by declaring that
a formal sum $\sum a_i g_i \in \NN[G]$ belongs to $N_G$ if and only if $\sum a_i g_i = 0$ in $R$.

\medskip

Our definition of matroid over a partial field\footnote{As before, by a matroid over a partial field $F$ we mean a matroid over the corresponding tract.} $P$ will have the property that (weak or strong) $P$-matroids are the same thing as matroids representable over $P$ in the sense of \cite{PvZLifts}.
In particular, a regular (resp. dyadic)
matroid is the same thing as a (weak or strong) matroid over the partial field ${\mathbb U}_0$ (resp. ${\mathbb D}$).

\subsection{Partial hyperfields}
\label{sec:partialhyperfields}

We define a {\bf partial hyperfield} to be a pair $(G,R)$, where $G$ is a subgroup of the group of units of a (commutative) hyperring $R$ which
is an integral domain, i.e., $xy=0$ in $R$ implies that $x=0$ or $y=0$.  
Partial hyperfields generalize both hyperfields and partial fields in a natural way.
We will set $P = G \cup \{ 0 \}$ and denote the partial hyperfield $(G,R)$ simply by $P$ when no confusion is likely to arise.

We can associate a tract to a partial hyperfield by declaring that a formal sum $\sum a_i g_i \in \NN[G]$ belongs to $N_G$ if and only if $0 \in \boxplus a_i g_i$.

We will see in \S\ref{sec:perfect} below that if $P$ is a {\em doubly distributive} partial hyperfield, every weak matroid over $P$ is automatically strong.



\subsection{Fuzzy rings} \label{sec:fuzzyringtract}


A {\bf fuzzy ring} in the sense of Dress--Wenzel (see, e.g., \cite{DressWenzelPM}) is a tuple $(K; +; \cdot; \epsilon; K_0)$ where $K$ is a set, 
$+$ and $\cdot$ are binary operations on $K$, $\epsilon \in K$, and $K_0 \subseteq K$ satisfying the following axioms:

\begin{itemize}
\item[(FR0)] $(K,+)$ and $(K,\cdot)$ are abelian semigroups with neutral elements $0,1$, respectively.
\item[(FR1)] $0 \cdot x = 0$ for all $x \in K$.
\item[(FR2)] If $x,y \in K$ and $\alpha \in K^* := \{ \beta \in K \; : \; 1 \in \beta \cdot K \}$ is a unit in $K$, then $\alpha \cdot (x+y) = \alpha \cdot x + 
\alpha \cdot y$.
\item[(FR3)] $\epsilon^2 = 1$.
\item[(FR4)] $K_0$ is a proper semiring ideal, i.e., $K_0 + K_0 \subseteq K_0$, $K \cdot K_0 \subseteq K_0$, $0 \in K_0$, and $1 \not\in K_0$.
\item[(FR5)] For $\alpha \in K^*$ we have $1 + \alpha \in K_0$ if and only if $\alpha = \epsilon$.
\item[(FR6)] If $x_1,x_2,y_1,y_2 \in K$ and $x_1 + y_1, x_2 + y_2 \in K_0$ then $x_1 \cdot x_2 + \epsilon \cdot y_1 \cdot y_2 \in K_0$.
\item[(FR7)] If $x,y,z_1,z_2 \in K$ and $x + y \cdot (z_1 + z_2) \in K_0$ then $x + y \cdot z_1 + y \cdot z_2 \in K_0$.
\end{itemize}

By an observation of Lorscheid (cf. Appendix~\ref{sec:Lorscheid}), the category of fuzzy rings together with weak homomorphisms between them is equivalent to the category whose objects are quintuples $(K;+;\cdot;\epsilon;K_0)$ for which $(K,+,\cdot)$ is a commutative semiring equal to $\N[K^\ast]$ and such that $\epsilon\in K^\ast$ and $K_0\subseteq K$ satisfy (FR4), (FR5) and (FR6).  Such quintiples are special cases of tracts (with $G=K^\ast$).

Using Lorscheid's observation,
fuzzy rings with weak homomorphisms between them can be viewed as a full subcategory of the category of tracts.
Moreover, it follows from the Grassmann-Pl{\"u}cker characterization in \cite{DressWenzelGP} that a matroid over a fuzzy ring $F$ in the sense of Dress--Wenzel is the same thing as a strong matroid in our sense over the corresponding tract.
Therefore our theory generalizes that of Dress and Wenzel.

\section{Matroids over tracts}
\label{sec:MatroidsOverTracts}

Let $E$ be a finite set.
In this section, we will define what it means to be a strong (resp. weak) {\bf matroid on $E$ with coefficients in a tract $F$}, or (for brevity) a strong (resp. weak) {\bf matroid over $F$} or {\bf $F$-matroid}.
Our definition will be such that:

\begin{itemize}
\item When $F=K$ is a field, a strong or weak matroid on $E$ with coefficients in $K$ is the same thing as a vector subspace of $K^E$ in the usual sense.
\item A strong or weak matroid over ${\mathbb K}$ is the same thing as a {\bf matroid}.
\item A strong or weak matroid over ${\mathbb T}$ is the same thing as a {\bf valuated matroid} in the sense of Dress--Wenzel \cite{DressWenzelVM}.
\item A strong or weak matroid over ${\mathbb S}$ is the same thing as an {\bf oriented matroid} in the sense of Bland--Las Vergnas \cite{BlandLasVergnas}.
\item A strong or weak matroid over the regular partial field ${\mathbb U}_0$ is the same thing as a regular matroid.
\end{itemize}

See \S\ref{sec:whythesame} for further details on the compatibility of our notion of $F$-matroid with various existing definitions in these particular examples.

\subsection{Linear independence, spans, and orthogonality} \label{sec:modules}

If $F$ is a tract and $E$ is a set, we denote by $F^E$ the set of functions from $E$ to $F$, which carries a natural action of $F$ by pointwise multiplication.
The $F$-circuits of a (strong or weak) $F$-matroid will by definition be certain subsets of $F^E$.

\medskip

There are natural left and right actions of $F$ on $F^E$ by coordinate-wise multiplication.
If $E = \{ 1,\ldots,m \}$, we sometimes write $F^m$ instead of $F^E$.

\medskip

The {\bf support} of $X \in F^E$, denoted $\underline{X}$ or ${\rm supp}(X)$, is the set of $e \in E$ such that $X(e) \neq 0$.
If $A \subseteq F^E$, we set ${\rm supp}(A) := \{ \underline{X} \; | \; X \in A \}$.

\medskip

The {\bf projective space} ${\mathbb P}(F^E)$ is defined to be the set of equivalence classes of elements of $F^E$ under the equivalence relation where $X_1 \sim X_2$ if and only if $X_1 = g \odot X_2$ for some $\alpha \in G$.  
Note that the support of $X \in F^E$ depends only on its equivalence class in ${\mathbb P}(F^E)$.
We let $\pi : F^E \backslash \{ 0 \} \onto {\mathbb P}(F^E)$ denote the natural projection. 

\begin{defn} \label{defn:linindep}
(Linear independence) We say that elements $X_1,\ldots,X_k$ in $F^E$ are {\bf linearly dependent} if there exist $c_1,\ldots,c_k \in F$, not all $0$, such that
\[
c_1 X_1 + \cdots + c_k X_k \in N_G^E.
\]
Elements which are not linearly dependent are called {\bf linearly independent}.
\end{defn}

{ 
We can define linear spans in a similar way.

\begin{defn} \label{defn:linspan}
(Linear span) The {\bf linear span} of $X_1,\ldots,X_k \in F^E$ is defined to be the set
of all $X \in F^E$ such that 
\[
c_1X_1 + \cdots + c_k X_k - X \in N_G^E
\]
for some $c_1,\ldots,c_k \in F$.
\end{defn}
}

The following definitions will play an important role in the theory of duality which we develop later in this paper.

{
\begin{defn}
(Involution) Let $F$ be a tract.  An {\bf involution} of $F$ is a homomorphism $\tau : F \to F$ 
such that $\tau^2$ is the identity map.
\end{defn}

\begin{defn}
(Orthogonality) Let $F$ be a tract endowed with an involution $x \mapsto \overline{x}$, and
let $E = \{ 1,\ldots, m \}$.
The {\bf inner product} of $X=(x_1,\ldots,x_m)$ and $Y=(y_1,\ldots,y_m)$ in $F^m$ is defined to be 
\[
X \cdot Y := x_1 \involution{y}_1 + \cdots + x_m \involution{y}_m.  
\]
We say that $X,Y$ are {\bf orthogonal}, denoted $X \perp Y$, if $X \cdot Y \in N_G$.

If $S \subseteq M$, we denote by $S^\perp$ the set of all $X \in M$ such that $X \perp Y$ for all $Y \in S$.
\end{defn}

When $F$ is the field $\CC$ of complex numbers or the phase hyperfield ${\mathbb P}$, one should take the involution on $F$ to be complex conjugation. For $F \in \{ {\mathbb K}, {\mathbb T}, {\mathbb S} \}$, one should take the involution on $F$ to be the identity map.
More generally, in examples where we do not specify what the involution $x \mapsto \involution{x}$ is, the reader should take it to be the identity map.
}

\medskip

Note for later reference that for $X,Y \neq 0$, the condition $X \perp Y$ only depends on the equivalence classes of $X,Y$ in ${\mathbb P}(F^E)$.

\subsection{Modular pairs}

As in the investigation of phased matroids by Anderson--Delucchi, a key ingredient for obtaining a robust notion of matroid in the general setting of hyperfields is the concept of {\em modular pairs}. 

\medskip

\begin{defn}
Let $E$ be a set and let ${\mathcal C}$ be a collection of pairwise incomparable nonempty subsets of $E$.  We say that $C_1,C_2 \in {\mathcal C}$ form a {\bf modular pair} in ${\mathcal C}$ if $C_1 \neq C_2$ and $C_1 \cup C_2$ does not properly contain a union of two distinct elements of ${\mathcal C}$.
\end{defn}

{It is useful to reinterpret this definition in the language of {\em lattices}.  We recall the relevant definitions for the reader's benefit.

\medskip

Let $(S,\leq)$ be a partially ordered set (poset).  A {\bf chain} in $S$ is a totally ordered subset $J$; the {\bf length} of a chain is $\ell(J) := |J| - 1$.
The {\bf length} of $S$ is the supremum of $\ell(J)$ over all chains $J$ of $S$.
The {\bf height} of an element $X$ of $S$ is the largest $n$ such that there is a chain $X_0 < X_1 < \ldots < X_n$ in $S$ with $X_n = X$.

\medskip

Given $x \in S$ we write $S_{\leq x} = \{ y \in S \; | \; y \leq x \}$ and $S_{\geq x} = \{ y \in S \; | \; y \geq x \}$.  These are sub-posets of $S$.
Let $x,y \in S$.  If the poset $S_{\geq x} \cap S_{\geq y}$ has a unique minimal element, this element is denoted $x \vee y$ and called the {\bf join} of $x$ and $y$.
If the poset $S_{\leq x} \cap S_{\leq y}$ has a unique maximal element, this element is denoted $x \wedge y$ and called the {\bf meet} of $x$ and $y$.
The poset $S$ is called a {\bf lattice} if the meet and join are defined for any $x,y \in S$.

\medskip

Every finite lattice $L$ has a unique minimal element $0$ and a unique maximal element $1$.
An element $x \in L$ is called an {\bf atom} if $x \neq 0$ and there is no $z \in L$ with $0 < z < x$.
Two atoms $x,y \in L$ form a {\bf modular pair} if the height of $x \vee y$ is 2, i.e., $x \neq y$ and there do not exist $z,z' \in L$ with $0 < z < z' < x \vee y$.

\medskip

If ${\mathcal S}$ is any family of subsets of a set $E$, the set $U({\mathcal S}) := \{ \bigcup T \; | \; T \in {\mathcal S} \}$ forms a lattice when equipped with the partial order coming from inclusion of sets, with join corresponding to union and with the meet of $x$ and $y$ defined to be the union of all sets in ${\mathcal S}$ contained in both
$x$ and $y$.
If the elements of ${\mathcal S}$ are incomparable, then every $x \in {\mathcal S}$ is atomic as an element of $U({\mathcal S})$.
We say that two elements $x,y \in {\mathcal S}$ are a {\bf modular pair} in ${\mathcal S}$ if they are a modular pair in the lattice $U({\mathcal S})$.
}

\medskip

Our interest in modular pairs comes in part from the observation of Anderson and Delucchi that there is a nice axiomatization of {\em phased matroids} in terms of modular pairs of {\em phased circuits}, but general pairs of phased circuits do not obey circuit elimination. 
The following facts about modular pairs will come in quite handy:

\begin{lemma} [cf.~\cite{Delucchi}] \label{lem:Delucchi}
Let ${\mathcal C}$ be a collection of non-empty incomparable subsets of a finite set $E$.  Then the following are equivalent:
\begin{enumerate}
\item ${\mathcal C}$ is the set of circuits of a matroid $M$ on $E$.
\item Every pair $C_1,C_2$ of distinct elements of ${\mathcal C}$ satisfies {\bf circuit elimination}: if $e \in C_1 \cap C_2$ then there exists $C_3 \in {\mathcal C}$ such that $C_3 \subseteq (C_1 \cup C_2) \backslash e$.
\item Every modular pair in ${\mathcal C}$ satisfies circuit elimination.
\end{enumerate}
\end{lemma}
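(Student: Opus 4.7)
My plan is to prove the three-way equivalence by establishing the cycle $(1) \Rightarrow (2) \Rightarrow (3) \Rightarrow (2) \Rightarrow (1)$. The implications $(1) \Leftrightarrow (2)$ form Whitney's classical axiomatization: the circuits of a matroid satisfy weak circuit elimination, and conversely any collection of incomparable nonempty subsets obeying weak circuit elimination is the set of circuits of a unique matroid (whose independent sets are those containing no element of $\mathcal{C}$). Since this is standard, I would merely cite it. The implication $(2) \Rightarrow (3)$ is trivial, as a modular pair is in particular a pair of distinct elements of $\mathcal{C}$. Thus the substantive content is $(3) \Rightarrow (2)$.

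For $(3) \Rightarrow (2)$, I plan to induct on $|C_1 \cup C_2|$. Let $C_1 \neq C_2$ in $\mathcal{C}$ with $e \in C_1 \cap C_2$. If $(C_1,C_2)$ is modular, condition (3) immediately produces the desired $C_3 \subseteq (C_1 \cup C_2) \setminus \{e\}$. Otherwise, by definition of non-modularity, there exist distinct $D_1, D_2 \in \mathcal{C}$ with $D_1 \cup D_2 \subsetneq C_1 \cup C_2$; I would fix such a pair with $|D_1 \cup D_2|$ minimal. If $e \notin D_1 \cup D_2$, then $D_1$ itself already lies in $(C_1 \cup C_2) \setminus \{e\}$ and we are finished. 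So we may assume $e \in D_1$ (up to relabelling). Now consider the two pairs $(D_1, C_1)$ and $(D_1, C_2)$: both have $e$ in their intersection, and both have union contained in $C_1 \cup C_2$. If either union is strictly contained, the inductive hypothesis applied to that pair yields a circuit $C_3 \subseteq (C_1 \cup C_2) \setminus \{e\}$, completing the induction.

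The main obstacle is the degenerate sub-case where simultaneously $D_1 \cup C_1 = D_1 \cup C_2 = C_1 \cup C_2$. In this situation, $D_1$ must contain the entire symmetric difference $C_1 \triangle C_2$; together with the fact that $D_1 \cup D_2 \subsetneq C_1 \cup C_2$, this forces $D_2$ to be a very constrained subset of $C_1 \cap C_2$. By carefully invoking incomparability of the elements of $\mathcal{C}$ (which prevents $D_1$ from containing or being contained in $C_1$ or $C_2$) together with the minimality of $|D_1 \cup D_2|$, I expect to either derive a contradiction outright or exhibit a strictly smaller union of two distinct circuits containing $e$, to which the inductive hypothesis applies. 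Ironing out this case analysis — and in particular checking that one does not have to iterate the choice of the minimal pair $(D_1,D_2)$ indefinitely — is where the bulk of the work lies; once it is handled, the three-way equivalence falls out.
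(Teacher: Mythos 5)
The paper itself gives no argument for this lemma (it simply cites Delucchi), so your proposal has to stand on its own, and as written it does not: the entire content of the lemma beyond Whitney's classical axiomatization is the implication $(3)\Rightarrow(2)$, and that is exactly the step you leave unfinished. Your "degenerate sub-case" ($D_1\cup C_1=D_1\cup C_2=C_1\cup C_2$, so $D_1\supseteq C_1\triangle C_2$) is not disposed of — you only say you "expect" a contradiction or a smaller pair — and the one concrete claim you make about it is false: $D_2$ need not be a subset of $C_1\cap C_2$; it only has to avoid (together with $D_1$) some single element of $C_1\cap C_2$, and it may well meet the symmetric difference. So there is a genuine gap precisely where the work is.

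The gap is created by recursing on the wrong pairs. Keep your induction on $|C_1\cup C_2|$, but in the non-modular case argue as follows. Choose distinct $D_1,D_2\in\mathcal{C}$ with $D_1\cup D_2\subsetneq C_1\cup C_2$ (no minimality needed). If $e\notin D_1$, then $D_1\subseteq D_1\cup D_2\subseteq(C_1\cup C_2)\setminus\{e\}$ and you may take $C_3=D_1$; similarly if $e\notin D_2$. Otherwise $e\in D_1\cap D_2$, and since $|D_1\cup D_2|<|C_1\cup C_2|$ the inductive hypothesis applies \emph{to the pair $(D_1,D_2)$ itself}, producing $C_3\subseteq(D_1\cup D_2)\setminus\{e\}\subseteq(C_1\cup C_2)\setminus\{e\}$. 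This finishes $(3)\Rightarrow(2)$ in a few lines; the detour through $(D_1,C_1)$ and $(D_1,C_2)$ (which also silently needs $D_1\neq C_1,C_2$ for the inductive hypothesis to be applicable) is what manufactures the hard case you could not close.
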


The following lemma, which can be pieced together from \cite[Lemma 2.7.1]{WhiteCG} and \cite[Lemma 4.3]{MurotaTamura} (and also makes a nice exercise), might help the reader get a better feeling for the concept of modular pairs in the context of matroid theory:

\begin{lemma} \label{lem:white}
Let $M$ be a matroid with rank function $r$, and let $C_1, C_2$ be distinct circuits of $M$.  Then the following are equivalent:
\begin{enumerate}
\item $C_1, C_2$ are a modular pair of circuits.
\item $r(C_1 \cup C_2) + r(C_1 \cap C_2) = r(C_1) + r(C_2)$.  
\item $r(C_1 \cup C_2) = |C_1 \cup C_2| - 2$.
\item For each $e \in C_1 \cap C_2$, there is a unique circuit $C_3$ with $C_3 \subseteq (C_1 \cup C_2) \backslash e$, and this circuit has the property that $C_3$ contains the symmetric difference $C_1 \Delta C_2$.
\item { There are a basis $B$ for $M$ and a pair $e_1,e_2$ of distinct elements of $E \backslash B$ such that $C_1 = C(B,e_1)$ and $C_2 = C(B,e_2)$, where $C(B,e)$ denotes the fundamental circuit with respect to $B$ and $e$.}
\end{enumerate}
\end{lemma}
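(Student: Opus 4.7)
The plan is to establish the equivalences by running the cycle $(2) \Leftrightarrow (3) \Leftrightarrow (4) \Leftrightarrow (5)$, and separately $(1) \Leftrightarrow (3)$. The key tool throughout is the observation that a subset $S$ of the ground set has nullity $|S|-r(S)$ equal to $1$ if and only if it contains a unique circuit (both directions are standard: nullity $1$ yields a unique fundamental circuit, while two distinct circuits in $S$ would force nullity at least $2$ via circuit elimination). I begin by recording that since $C_1, C_2$ are distinct circuits, neither contains the other, so $C_1 \cap C_2$ is properly contained in each $C_i$ and is therefore independent; hence $r(C_i) = |C_i| - 1$ and $r(C_1 \cap C_2) = |C_1 \cap C_2|$. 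Substituting these into $|C_1|+|C_2| = |C_1 \cup C_2| + |C_1 \cap C_2|$ rewrites $(2)$ as $r(C_1 \cup C_2) = |C_1 \cup C_2| - 2$, which is $(3)$, giving $(2) \Leftrightarrow (3)$.

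For $(3) \Rightarrow (4)$, I note that every $x \in C_1 \cup C_2$ lies on a circuit contained in $C_1 \cup C_2$ (namely $C_1$ or $C_2$), so $r((C_1 \cup C_2) \setminus x) = r(C_1 \cup C_2) = |C_1 \cup C_2| - 2$; thus $(C_1 \cup C_2) \setminus x$ has nullity $1$ and contains a unique circuit. Applied to $x = e \in C_1 \cap C_2$, this yields $C_3$. For the containment $C_1 \Delta C_2 \subseteq C_3$ I argue by contradiction: if some $f \in C_1 \setminus C_2$ lies outside $C_3$, then $C_3 \subseteq (C_1 \cup C_2) \setminus \{e, f\}$; but $(C_1 \cup C_2) \setminus f$ also has nullity $1$ and visibly contains $C_2$, so its unique circuit must be $C_2$, forcing $C_3 = C_2$ and contradicting $e \in C_2 \setminus C_3$. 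Conversely, $(4)$ immediately forces $(C_1 \cup C_2) \setminus e$ to have nullity $1$, and restoring $e$ (which is spanned by $C_1 \setminus e$) does not change the rank, giving $(3)$.

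For $(3) \Leftrightarrow (5)$, pick $e_i \in C_i \setminus C_{3-i}$. Then $(C_1 \cup C_2) \setminus \{e_1, e_2\} = (C_1 \setminus e_1) \cup (C_2 \setminus e_2)$, and since each $C_i \setminus e_i$ spans $C_i$, this set spans $C_1 \cup C_2$; under $(3)$ its cardinality equals the rank, so it is a basis of the restriction, which extends to a basis $B$ of $M$ with $e_1, e_2 \in E \setminus B$. Since $B \cup \{e_i\}$ has nullity $1$ and contains the circuit $C_i$, uniqueness of fundamental circuits gives $C_i = C(B, e_i)$. The converse is immediate: $(5)$ exhibits the required basis of $C_1 \cup C_2$ of size $|C_1 \cup C_2| - 2$, from which $(3)$ follows by a rank count.

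Finally, $(3) \Rightarrow (1)$ is clean: if $C_4 \neq C_5$ are circuits with $C_4 \cup C_5 \subsetneq C_1 \cup C_2$ and $g$ is an element of $C_1 \cup C_2$ missed by $C_4 \cup C_5$, then $(C_1 \cup C_2) \setminus g$ has nullity $1$ yet contains two distinct circuits, a contradiction. For $(1) \Rightarrow (3)$, which I expect to be the main obstacle, I argue contrapositively: if $r(C_1 \cup C_2) \leq |C_1 \cup C_2| - 3$, choose a basis $B'$ of the restriction of $M$ to $C_1 \cup C_2$; then there are at least three elements $f_1, f_2, f_3 \in (C_1 \cup C_2) \setminus B'$, and the fundamental circuits $D_i := C(B', f_i) \subseteq C_1 \cup C_2$ are pairwise distinct since $D_i$ meets $(C_1 \cup C_2) \setminus B'$ only in $f_i$. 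The union $D_1 \cup D_2$ then omits $f_3$, exhibiting a union of two distinct circuits strictly contained in $C_1 \cup C_2$ and contradicting $(1)$. This closes the cycle.
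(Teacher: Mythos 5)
Your proof is correct in substance, and it is worth noting that the paper does not actually prove this lemma at all: it is dispatched with a citation to \cite[Lemma 2.7.1]{WhiteCG} and \cite[Lemma 4.3]{MurotaTamura} and the remark that it ``makes a nice exercise.'' So your write-up is genuinely doing the exercise, and it does so with one uniform organizing principle --- a set has nullity $1$ if and only if it contains a unique circuit, combined with the observation that deleting an element of $C_1 \cup C_2$ does not lower the rank of the union --- which cleanly drives $(2)\Leftrightarrow(3)$, $(3)\Leftrightarrow(4)$, $(3)\Leftrightarrow(5)$, and both directions of $(1)\Leftrightarrow(3)$. All the individual steps check out: the identities $r(C_i)=|C_i|-1$ and $r(C_1\cap C_2)=|C_1\cap C_2|$, the basis-extension in $(3)\Rightarrow(5)$ (any basis containing $(C_1\cup C_2)\setminus\{e_1,e_2\}$ automatically avoids $e_1,e_2$ since that set already spans them), and the three-fundamental-circuits argument for $\neg(3)\Rightarrow\neg(1)$ are all sound.

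Two small points deserve a remark. First, in the contrapositive of $(1)\Rightarrow(3)$ you pass from $\neg(3)$ to ``$r(C_1\cup C_2)\leq|C_1\cup C_2|-3$''; this silently uses the submodular inequality $r(C_1\cup C_2)+r(C_1\cap C_2)\leq r(C_1)+r(C_2)$, which together with your opening computations shows the nullity of a union of two distinct circuits is always at least $2$ --- a one-line addition worth making explicit. Second, your step $(4)\Rightarrow(3)$ requires an element $e\in C_1\cap C_2$ to exist. When $C_1\cap C_2=\emptyset$, condition $(4)$ is vacuously true while $(1)$--$(3)$ and $(5)$ can all fail (take two disjoint $2$-element circuits in $U_{1,4}$), so no argument can close that gap; this is a defect of the statement as literally written (the cited sources, and the paper's use of modular pairs, concern circuits sharing an element $e$) rather than of your proof, but you should flag the standing assumption $C_1\cap C_2\neq\emptyset$ where you invoke it.
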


In particular, if $M$ is the cycle matroid of a connected graph $G$ then $C_1,C_2$ are a modular pair if and only if they are fundamental cycles associated to the same spanning tree $T$.

Note that for general circuits $C_1$ and $C_2$ in a matroid $M$, the {\bf submodular inequality} asserts that $r(C_1 \cup C_2) + r(C_1 \cap C_2) \leq r(C_1) + r(C_2)$.  Condition (2) of the lemma says that $C_1$ and $C_2$ form a modular pair if and only if {\em equality} holds in this inequality (hence the name ``modular pair'').

\subsection{Weak circuit axioms}

The following definition presents the first of several equivalent axiomatizations of weak matroids over tracts.
\begin{defn} \label{def:Fcircuits}
Let $E$ be a non-empty finite set and let $F = (G,N_G)$ be a tract.  
A subset ${\mathcal C}$ of $F^E$ is called the {\bf $F$-circuit set of a weak $F$-matroid $M$ on $E$} if ${\mathcal C}$ satisfies the following axioms:
\begin{itemize}
\item (C0) $0 \not\in {\mathcal C}$.
\item (C1) If $X \in {\mathcal C}$ and $\alpha \in F^\times$, then $\alpha \cdot X \in {\mathcal C}$.
\item (C2) [Incomparability] If $X,Y \in {\mathcal C}$ and $\underline{X} \subseteq \underline{Y}$, then there exists $\alpha \in F^\times$ such that $X = \alpha \cdot Y$.
\item ${\rm (C3)}'$ [Modular Elimination] If $X,Y \in {\mathcal C}$ are a {\bf modular pair of $F$-circuits} (meaning that $\underline{X},\underline{Y}$ are a modular pair in ${\rm supp}({\mathcal C})$) and $e \in E$ is such that $X(e)=-Y(e) \neq 0$, 
there exists an $F$-circuit $Z \in {\mathcal C}$ such that $Z(e)=0$ and $X(f) + Y(f) - Z(f) \in N_G$ for all $f \in E$.
\end{itemize} 
\end{defn}

This is equivalent to the axiom system for phased circuits given in \cite{AndersonDelucchi} in the case of phased matroids (i.e., when $F = {\mathbb P}$).  Also, the $F$-circuit $Z$ in (C3)$'$ is {\em unique}.  (Both of these observations follow easily from Lemma~\ref{lem:white}.)

\medskip

If ${\mathcal C}$ is the set of $F$-circuits of a weak $F$-matroid $M$ with ground set $E$, there is an underlying matroid (in the usual sense) $\underline{M}$ on $E$ whose circuits are the supports of the $F$-circuits of $M$.  (It is straightforward, in view of Lemma~\ref{lem:Delucchi}, to check that the circuit axioms for a matroid are indeed satisfied.)

\begin{defn} \label{def:rank}
The {\bf rank} of $M$ is defined to be the rank of the underlying matroid $\underline{M}$.
\end{defn}

\medskip

A {\bf projective $F$-circuit} of $M$ is an equivalence class of $F$-circuits of $M$ under the equivalence relation $X_1 \sim X_2$ if and only if $X_1 = g \cdot X_2$ for some $g \in F^\times$.  
Axioms (C0)-(C2) together imply that the map from projective $F$-circuits of $M$ to circuits of $\underline{M}$ which sends a projective circuit $C$ to its support is a {\em bijection}.
In particular, $M$ has only finitely many projective $F$-circuits, and one can think of a weak matroid over $F$ as a matroid $\underline{M}$ together with a function associating to each circuit $\underline{C}$ of $\underline{M}$ an element $X(\underline{C}) \in {\mathbb P}(F^E)$ such that modular elimination holds for
$\cC := \pi^{-1}(\{ X(\underline{C}) \})$.

\begin{remark}
For a version of ${\rm (C3)}'$ which holds even when $X,Y$ are not assumed to be a modular pair, see Lemma~\ref{lem:Lemma5.4}.  This weaker elimination property is not strong enough, however, to characterize weak $F$-matroids except in very special cases such as $F={\mathbb K}$.
\end{remark}

\subsection{Strong circuit axioms}



We say that a family of atomic elements of a lattice is {\bf modular} if the height of their join in the lattice is the same as the size of the family. If $\Ccal$ is a subset of $F^E$ then a {\bf modular family} of elements of $\Ccal$ is one such that the supports give a modular family of elements in the lattice of unions of supports of elements of $\Ccal$.

The following definition presents the first of several equivalent axiomatizations of strong matroids over tracts.
\begin{defn} \label{def:Fcircuitsprime}
A subset ${\mathcal C}$ of $F^E$ is called the {\bf $F$-circuit set of a strong $F$-matroid $M$ on $E$} if ${\mathcal C}$ satisfies (C0),(C1),(C2), and the following stronger version of the modular elimination axiom ${\rm (C3)}'$:

{
\begin{itemize}
{ \item (C3) [Strong modular elimination] Suppose $X_1,\ldots,X_k$ and $X$ are $F$-circuits of $M$ which together form a modular family of size $k+1$ such that $\underline X \not \subseteq \bigcup_{1 \leq i \leq k} \underline X_i$, and for $1 \leq i \leq k$ let $$e_i \in (X \cap X_i) \setminus \bigcup_{\substack{1 \leq j \leq k \\ j \neq i}} X_j$$ be such that $X(e_i) = -X_i(e_i) \neq 0$. Then there is an $F$-circuit $Z$ such that $Z(e_i) = 0$ for $1 \leq i \leq k$ and $X_1(f) + \cdots + X_k(f) + X(f) - Z(f) \in N_G$ for every $f \in E$.}
\end{itemize}
}
\end{defn}

Any strong $F$-matroid on $E$ is in particular a weak $F$-matroid on $E$ {(take $k=1$ in the above definition)}, 
and we define the rank of such an $F$-matroid accordingly.

\medskip

{ 
Condition (C3) in Definition~\ref{def:Fcircuitsprime} may look unnatural and/or unmotivated at first glance.  However, the next result 
shows that (C3) is equivalent to a more natural-looking condition ${\rm (C3)}''$:


\begin{theorem} \label{thm:C3primeprime}
Let ${\mathcal C}$ be a subset of $F^E$ satisfying {\rm (C0),(C1)}, and {\rm (C2)}.  Then ${\mathcal C}$ satisfies {\rm (C3)} if and only if it satisfies 
{
\begin{itemize}
{ \item ${\rm (C3)}''$
The support of ${\mathcal C}$ is the set of circuits of a matroid $\underline{M}$, and for every $X \in {\mathcal C}$ and every basis $B$ of $\underline{M}$, $X$ is in the linear span of the vectors $X_{B,e}$ for $e \in E \setminus B$, where 
$X_{B,e}$ denotes the unique element of ${\mathcal C}$ with $X_{B,e}(e)=1$ whose support is the fundamental circuit of $e$ with respect to $B$.
}
\end{itemize}
}
\end{theorem}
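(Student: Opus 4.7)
The plan is to prove the equivalence by exhibiting, in each direction, a careful choice of basis of $\underline{M}$ that converts the given $F$-circuits into (scalar multiples of) fundamental circuits, and then transferring between the modular-elimination and linear-span formulations.

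For (C3) $\Rightarrow$ (C3)$''$: Specializing (C3) to $k = 1$ gives the modular elimination axiom (C3)$'$, so by Lemma~\ref{lem:Delucchi} the supports in $\cC$ form the circuits of a matroid $\underline{M}$. Now I would fix $X \in \cC$ and a basis $B$ and write $\underline{X} \setminus B = \{g_0, g_1, \ldots, g_s\}$. The case $s = 0$ is immediate from (C2). For $s \geq 1$ I would set $X^{(i)} := -X(g_i) X_{B, g_i}$ for $i = 1, \ldots, s$ and verify via a direct rank computation (using that each $g_i$ lies in $\mathrm{cl}(C(B, g_i) \cap B)$ and $g_0 \in \mathrm{cl}(\underline{X} \setminus \{g_0\})$) that the union $\underline{X} \cup \bigcup_{i \geq 1} C(B, g_i)$ has corank exactly $s + 1$ in $\underline{M}$. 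Thus $\{X^{(1)}, \ldots, X^{(s)}, X\}$ is a modular family of size $s + 1$, and the remaining hypotheses of (C3) hold with $e_i = g_i$ and the extra element $g_0 \in \underline{X} \setminus \bigcup_i C(B, g_i)$. Applying (C3) produces $Z \in \cC$ with $Z(g_i) = 0$ for $i \geq 1$ and $X^{(1)} + \cdots + X^{(s)} + X - Z \in N_G^E$. Examining the $g_0$-coordinate (where each $X^{(i)}$ vanishes) forces $Z(g_0) = X(g_0) \neq 0$, and the null condition outside the union of supports forces $\underline{Z} \subseteq B \cup \{g_0\}$. Hence $\underline{Z} = C(B, g_0)$ and $Z = X(g_0) X_{B, g_0}$ by (C2). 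Substituting $X^{(i)} = -X(g_i) X_{B, g_i}$ and rearranging then gives $X - \sum_{i=0}^{s} X(g_i) X_{B, g_i} \in N_G^E$, which is (C3)$''$.

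For (C3)$''$ $\Rightarrow$ (C3): Since $\underline{X} \not\subseteq \bigcup \underline{X_i}$, I would pick $e_0 \in \underline{X} \setminus \bigcup \underline{X_i}$; then $\{e_1, \ldots, e_k\} \subsetneq \underline{X}$ is independent. I would extend it to a basis $B$ of $\underline{M}$ whose intersection with $\mathrm{cl}(\bigcup \underline{X_i} \cup \underline{X})$ is a basis of that closure. By minimality, each $\underline{X_i}$ equals the fundamental circuit $C(B, f_i)$ for a unique $f_i \notin B$, and the modular family condition forces the $k + 1$ non-basis elements in $\bigcup \underline{X_i} \cup \underline{X}$ to be exactly $\{f_1, \ldots, f_k, e_0\}$. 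The crucial step is a simultaneous basis exchange: since by hypothesis the $k \times k$ ``incidence matrix'' with entry $\mathbf{1}[e_j \in \underline{X_i}]$ is the identity, iterating single pivot exchanges shows that $B' := (B \setminus \{e_1, \ldots, e_k\}) \cup \{f_1, \ldots, f_k\}$ is again a basis and that with respect to $B'$ each $\underline{X_i}$ coincides with the fundamental circuit $C(B', e_i)$. In particular $X(e_i) X_{B', e_i} = -X_i$, using $X(e_i) = -X_i(e_i)$. The non-basis elements of $\underline{X}$ with respect to $B'$ are precisely $\{e_0, e_1, \ldots, e_k\}$, so applying (C3)$''$ to $X$ with basis $B'$ and substituting yields $X + \sum_i X_i - Z \in N_G^E$ where $Z := X(e_0) X_{B', e_0}$. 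Finally $Z(e_i) = 0$ for $i \geq 1$ because $e_i \neq e_0$ and $e_0$ is the unique non-basis element of $C(B', e_0)$ with respect to $B'$.

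The hardest part will be the simultaneous basis exchange in the second direction: one must verify that after each single pivot the remaining fundamental circuits $C(B_\ell, f_j)$ for $j > \ell$ are still equal to $\underline{X_j}$, which is routine given $e_\ell \notin \underline{X_j}$ for $\ell \neq j$. A secondary subtlety is arranging in the first step that $B$ restricts to a basis of $\mathrm{cl}(\bigcup \underline{X_i} \cup \underline{X})$, which is needed to pin down the $k + 1$ non-basis elements of the union.
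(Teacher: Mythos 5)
Your first direction, (C3) $\Rightarrow$ (C3)$''$, is essentially the paper's own argument (the paper eliminates at all but one non-basis element of $\underline{X}$ against the scaled fundamental circuits and then identifies the resulting $Z$ with the remaining scaled fundamental circuit, exactly as you do up to an overall sign), and your explicit nullity computation for the modularity of the family $\{X^{(1)},\ldots,X^{(s)},X\}$ correctly fills in what the paper dismisses as an easy check. That half is fine.

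The converse direction has a genuine gap at the step ``By minimality, each $\underline{X_i}$ equals the fundamental circuit $C(B,f_i)$ \ldots and the modular family condition forces the $k+1$ non-basis elements \ldots to be exactly $\{f_1,\ldots,f_k,e_0\}$.'' Nothing in your prescription for $B$ (extend $\{e_1,\ldots,e_k\}$ to a basis meeting $\mathrm{cl}(A)$ in a basis, $A=\underline{X}\cup\bigcup\underline{X_i}$) guarantees either claim, and worse, a basis containing all the $e_i$ with each $\underline{X_i}$ a fundamental circuit need not exist at all, so no cleverer choice of the extension can rescue the subsequent simultaneous pivot. Take $\underline{M}$ the cycle matroid of the multigraph on vertices $u,v,w,x$ with parallel edges $e_1,a$ joining $u,v$, and edges $e_2=vw$, $b=wu$, $c=wx$, $d=xu$; put $\underline{X_1}=\{e_1,a\}$, $\underline{X_2}=\{a,e_2,b\}$, $\underline{X}=\{e_1,e_2,c,d\}$. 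This is a modular family of size $3$ (the union has $6$ elements and rank $3$), with $e_1\in(\underline{X}\cap\underline{X_1})\setminus\underline{X_2}$, $e_2\in(\underline{X}\cap\underline{X_2})\setminus\underline{X_1}$, and $\underline{X}\not\subseteq\underline{X_1}\cup\underline{X_2}$; it is realizable over any field (or over ${\mathbb K}$), so it occurs for sets $\mathcal{C}$ satisfying (C0)--(C2) and (C3)$''$. Now any basis $B$ containing $e_1$ and $e_2$ must omit $a$ (else $B\supseteq\{e_1,a\}$ is dependent); then for $\underline{X_2}$ to be a fundamental circuit with respect to $B$ its unique non-basis element would have to be $a$, forcing $\underline{X_2}=C(B,a)=\underline{X_1}$, a contradiction. (Even in easier cases your recipe admits bad choices: in $U_{2,4}$ with $\underline{X}=\{a,b,c\}$, $\underline{X_1}=\{b,c,d\}$, $e_1=b$, $e_0=a$, the permitted basis $B=\{a,b\}$ puts $e_0$ into $B$ and leaves $\underline{X_1}$ with two non-basis elements.) The correct route, which is the paper's, is the opposite one: keep $e_0,e_1,\ldots,e_k$ \emph{out} of the basis. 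Using modularity one shows $I=A\setminus\{e_0,e_1,\ldots,e_k\}$ is a basis of $\underline{M}|A$ (its size equals the rank of $A$ because the height of $A$ is $k+1$, and it is spanning since each $e_i\in\mathrm{cl}(\underline{X_i}\setminus e_i)\subseteq\mathrm{cl}(I)$ and then $e_0\in\mathrm{cl}(\underline{X}\setminus e_0)\subseteq\mathrm{cl}(I)$); extending by a basis $J$ of $\underline{M}/A$ gives a basis $B=I\cup J$ for which automatically $\underline{X_i}=C(B,e_i)$ (so $X_i=-X(e_i)X_{B,e_i}$) and $\underline{X}\setminus B=\{e_0,e_1,\ldots,e_k\}$; applying (C3)$''$ to $X$ with this $B$ then yields the required $Z=X(e_0)X_{B,e_0}$ directly, with no basis exchange needed. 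You would need to replace your construction of $B$ by this one (or prove some substitute for the nonexistent ``simultaneous pivot'' basis) for the second direction to stand.
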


\begin{remark} Condition ${\rm (C3)}''$ is equivalent to the statement that 
the support of ${\mathcal C}$ is the set of circuits of a matroid $\underline{M}$, and for every $X \in {\mathcal C}$ and every basis $B$ of $\underline{M}$ we have
\begin{equation} \label{eq:lincomb}
X(f) - \sum_{e \in E \setminus B} X(e)X_{B,e}(f) \in N_G
\end{equation}
for all $f \in E$.
\end{remark}

Despite its naturality, condition ${\rm (C3)}''$ has the disadvantage that we need to know {\it a priori} that the support of ${\mathcal C}$ is the set of circuits of a matroid.  Another reason to prefer (C3) over ${\rm (C3)}''$ is that the former is a more direct generalization of the weak modular elimination axiom ${\rm (C3)}'$.
On the other hand, condition ${\rm (C3)}''$ has a more direct relationship to the axioms for $F$-vectors developed by Anderson in \cite{AndersonVectors}.

\medskip

We provide a proof of Theorem~\ref{thm:C3primeprime} in \S\ref{sec:C3primeprime}.
}

\subsection{Grassmann-Pl{\"u}cker functions}
\label{sec:GPsection}

We now describe a cryptomorphic characterization of weak and strong matroids over a tract $F$ in terms of {\bf Grassmann-Pl{\"u}cker functions} (called ``chirotopes'' in the theory of oriented matroids and ``phirotopes'' in \cite{AndersonDelucchi}).  In addition to being interesting in its own right, this description will be crucial for establishing a duality theory
for matroids over $F$.

\medskip

\begin{defn}
Let $E$ be a non-empty finite set, let $F=(G,N_G)$ be a tract, and let $r$ be a positive integer.  A { (strong)} {\bf Grassmann-Pl{\"u}cker function of rank $r$ on $E$ with coefficients in $F$} is a function $\varphi : E^r \to F$ such that:
\begin{itemize}
\item (GP1) $\varphi$ is not identically zero.
\item (GP2) $\varphi$ is alternating, i.e., $\varphi(x_1,\ldots,x_i, \ldots, x_j, \ldots, x_r)=-\varphi(x_1,\ldots,x_j, \ldots, x_i, \ldots, x_r)$ and $\varphi(x_1,\ldots, x_r) = 0$ if $x_i = x_j$ for some $i \neq j$.
\item (GP3) [Grassmann--Pl{\"u}cker relations] For any two subsets $\{ x_1,\ldots,x_{r+1} \}$ and $\{ y_1,\ldots,y_{r-1} \}$ of $E$,
\begin{equation}
\label{eq:GP3}
\sum_{k=1}^{r+1} (-1)^k \varphi(x_1,x_2,\ldots,\hat{x}_k,\ldots,x_{r+1}) \cdot \varphi(x_k,y_1,\ldots,y_{r-1}) \in N_G.
\end{equation}
\end{itemize}
\end{defn}

For example, if $F=K$ is a field and $A$ is an $r \times m$ matrix of rank $r$ with columns indexed by $E$, it is a classical fact 
that the function $\varphi_A$ taking an $r$-element subset of $E$ to the determinant of the corresponding $r \times r$ minor of $A$ is a Grassmann-Pl{\"u}cker function.
The function $\varphi_A$ depends (up to a non-zero scalar multiple) only on the row space of $A$, and conversely the row space of $A$ is uniquely determined by the function $\varphi_A$ (this is equivalent to the well-known fact that the {\em Pl{\"u}cker relations}
cut out the Grassmannian $G(r,m)$ as a projective algebraic set).
\medskip

We say that two Grassmann-Pl{\"u}cker functions $\varphi_1$ and $\varphi_2$ are {\bf equivalent} if $\varphi_1 = g \cdot \varphi_2$ for some $g \in F^\times$.

\begin{theorem} \label{thm:A}
Let $E$ be a non-empty finite set, let $F$ be a tract, and let $r$ be a positive integer. 
There is a natural bijection between equivalence classes of Grassmann-Pl{\"u}cker functions of rank $r$ on $E$ with coefficients in $F$ and strong $F$-matroids of rank $r$ on $E$, defined via axioms {\rm (C0)} through {\rm (C3)}.
\end{theorem}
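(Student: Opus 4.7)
The plan is to exhibit explicit constructions in both directions and then verify they are mutually inverse, following the classical template (bases/minors $\leftrightarrow$ circuits) but carrying it out in the language of tracts rather than fields.

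For the forward direction, given a Grassmann--Pl\"ucker function $\varphi$, I would first recover the underlying matroid $\underline{M}$ by declaring an $r$-subset $B=\{b_1,\ldots,b_r\}$ of $E$ to be a basis precisely when $\varphi(b_1,\ldots,b_r)\neq 0$; the classical three-term identity, specialized from (GP3) by taking $\{x_1,\ldots,x_{r+1}\}=B\cup\{x\}$ and $\{y_1,\ldots,y_{r-1}\}=B\setminus\{b_i,b_j\}\cup\{y\}$, yields the symmetric basis-exchange axiom for $\underline{M}$ (here one must use Lemma~\ref{lem:negatives} plus (T2) to extract the relation $\varphi(B)\varphi(B')\neq 0$ for at least one of the exchanged bases). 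Then, for each basis $B=(b_1,\ldots,b_r)$ and each $e\in E\setminus B$, I would define a putative fundamental circuit $X_{B,e}\in F^E$ by
\[
X_{B,e}(b_i):=(-1)^{i}\,\varphi(b_1,\ldots,b_{i-1},e,b_{i+1},\ldots,b_r),\qquad X_{B,e}(e):=\varphi(b_1,\ldots,b_r),
\]
and $0$ elsewhere. The support of $X_{B,e}$ is, by the exchange property, exactly the fundamental circuit $C(B,e)$ of $\underline M$. I would then take $\mathcal C_\varphi$ to be the closure of $\{X_{B,e}\}$ under scalar multiplication by $F^\times$ (axiom (C1) is then automatic, and (C0) is immediate from (GP1)). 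The first real work is to show that $X_{B,e}$ only depends, up to $F^\times$, on the \emph{unordered} support $C(B,e)$; this is where (GP3) is used essentially, comparing $X_{B,e}$ and $X_{B',e'}$ whenever $B\cup\{e\}=B'\cup\{e'\}$ (so the two fundamental circuits are equal as sets), by expanding the appropriate Grassmann--Pl\"ucker relation and invoking Lemma~\ref{lem:negatives}(a) to read off a scalar factor. Axiom (C2) follows from this ``phase is determined by the support'' fact together with the matroid property of $\underline M$.

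The heart of the proof is verifying axiom (C3). Given a modular family $X_1,\ldots,X_k,X\in\mathcal C_\varphi$ with distinguished coordinates $e_i$ as in the statement, Lemma~\ref{lem:white} (applied repeatedly) lets me choose a common basis $B$ of $\underline M$ and elements $e,f_1,\ldots,f_k\in E\setminus B$ so that, up to $F^\times$ multiples, $X=X_{B,e}$ and $X_i=X_{B,f_i}$; the ``sufficiently close'' modularity assumption is exactly what is needed to guarantee that this is possible simultaneously. With everything written in terms of $\varphi$ evaluated on sets of the form $B\triangle\{e,f_i\}$, the required relation $\sum_i X_i(g)+X(g)-Z(g)\in N_G$ (for a suitable $Z\in\mathcal C_\varphi$, necessarily the one supported on the appropriate fundamental circuit of $B\triangle\{e\}$) becomes a direct consequence of a $(k{+}1)$-fold iterated application of the Grassmann--Pl\"ucker relations, combined with (T3) (to pull scalar factors through) and the closure properties of $N_G$. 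This step is technical but mechanical once the correct bases are chosen; I expect this combinatorial bookkeeping on modular families to be the main obstacle, since one must carefully track signs (via the alternating axiom (GP2)) and stay inside $N_G$ at every stage without being allowed to add arbitrary elements of $N_G$.

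For the reverse direction, starting from a strong $F$-matroid $M$ with circuit set $\mathcal C$ and underlying matroid $\underline M$, I would fix a reference basis $B_0$, set $\varphi(B_0):=1$, and then propagate along the basis-exchange graph: if $B$ and $B'=B\setminus\{b\}\cup\{e\}$ are adjacent bases, the unique (up to scalar) circuit $X\in\mathcal C$ with support $C(B,e)$ determines the ratio $\varphi(B')/\varphi(B)$ via the formula above. The key point is well-definedness: one must show the value obtained at $B$ is independent of the path in the basis-exchange graph, equivalently that going around each elementary cycle returns $1$. Elementary cycles in the basis-exchange graph correspond to modular pairs of fundamental circuits, and the strong modular elimination axiom (C3) is precisely what makes the corresponding product of ratios lie in $N_G+1$, forcing it to equal $1$ by Lemma~\ref{lem:negatives}(a). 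The alternating property (GP2) is built in by using ordered bases and signs, and (GP3) is then verified by unwinding the definition at $\{x_1,\ldots,x_{r+1}\}$ and $\{y_1,\ldots,y_{r-1}\}$ using one application of (C3) on a modular $(r{+}1)$-family of circuits.

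Finally, I would check that the two constructions are mutually inverse up to the equivalence $\varphi\sim g\cdot\varphi$: starting from $\varphi$, running forward to $\mathcal C_\varphi$ and then backward recovers $\varphi$ up to the choice of scalar at $B_0$, by construction of $X_{B,e}$; starting from $\mathcal C$, the recovered $\mathcal C_{\varphi_{\mathcal C}}$ agrees with $\mathcal C$ on each fundamental circuit and hence everywhere, since (C2) together with the matroid structure of $\underline M$ shows every element of $\mathcal C$ is a scalar multiple of some $X_{B,e}$. The main obstacle throughout remains the verification of (C3) from (GP3) and of (GP3) from (C3); Lemma~\ref{lem:white} and the existence of common bases for modular families are the crucial combinatorial inputs that make the algebraic manipulation of sums in $N_G$ go through cleanly.
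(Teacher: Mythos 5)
The central step of your forward direction---reducing the verification of (C3) to the claim that a modular family $X,X_1,\ldots,X_k$ satisfying the hypotheses of (C3) consists of fundamental circuits with respect to one common basis---is false for $k\geq 2$; Lemma~\ref{lem:white}(5) covers modular \emph{pairs} only and does not iterate. Concretely, take the rank-$2$ graphic matroid on a triangle with every edge doubled, with edges $a_1,a_2$ (between $u,v$), $b_1,b_2$ (between $v,w$), $c_1,c_2$ (between $u,w$), and circuits $\underline{X}=\{a_1,b_1,c_1\}$, $\underline{X_1}=\{a_1,a_2\}$, $\underline{X_2}=\{a_2,b_2,c_1\}$, with the $F$-circuits scaled so that $X(a_1)=-X_1(a_1)$ and $X(c_1)=-X_2(c_1)$. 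The union has nullity $3$, so this is a modular family of size $3$, and $z=b_1$, $e_1=a_1$, $e_2=c_1$ satisfy all hypotheses of (C3); but if some basis $B$ had $|\underline{X}\setminus B|=|\underline{X_1}\setminus B|=|\underline{X_2}\setminus B|=1$, then a count of incidences between $A\setminus B$ and the three circuits forces each circuit to contain an element lying in neither of the other two, whereas $\underline{X_1}\subseteq \underline{X}\cup\underline{X_2}$. So no common basis exists and your reduction collapses. The proof of Theorem~\ref{thm:Prop5.3} in the paper instead uses \emph{two} bases: $B_1\supseteq\underline{X}\setminus\{z\}$, so that $X$ is fundamental there, and $B_2\supseteq I=A\setminus\{z,e_1,\ldots,e_k\}$, so that the $X_i$ and the eliminant $Z$ are fundamental there, and then a \emph{single} application of the full relation (GP3) to this pair of bases. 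Your fallback of a ``$(k{+}1)$-fold iterated application'' of Grassmann--Pl\"ucker relations is not available in a general tract: $N_G$ is not closed under addition and admits no cancellation, so memberships in $N_G$ cannot be chained; this is exactly the weak/strong subtlety, and indeed the three-term relations ${\rm (GP3)}'$ do \emph{not} imply (C3) (see \S\ref{sec:counterexample}).

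The converse direction is also substantially underestimated. Propagating ratios of $\varphi$ along the basis-exchange graph matches the paper's construction in outline, but your one-line verification of the full (GP3) ``by one application of (C3) on a modular $(r{+}1)$-family'' hides the real content: the factors $\varphi(x_i,y_1,\ldots,y_{r-1})$ in (GP3) are naturally the values of a single \emph{cocircuit} (supported on the complement of the hyperplane spanned by $y_1,\ldots,y_{r-1}$), not of any one circuit, and for varying $i$ their ratios come from different circuits. This is why the paper inserts the dual-pair axiomatization as an intermediate step: Theorem~\ref{thm:Prop5.6} uses (C3), applied to a modular family built from fundamental circuits of a carefully chosen basis, to construct the cocircuit signature $\mathcal{D}$ and prove full circuit--cocircuit orthogonality (DP3), and Theorem~\ref{thm:Prop4.6} (a long argument following Anderson--Delucchi) then produces $\varphi$ from the dual pair and derives (GP3) precisely from $X\perp Y$. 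Without this orthogonality statement, or an equivalent substitute, your sketch does not yield (GP3). So, although the objects you write down (the fundamental circuits $X_{B,e}$ and the ratio-propagated $\varphi$) agree with the paper's, the proposal has genuine gaps in both directions at exactly the technically hard points.
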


The bijective map from equivalence classes of Grassmann-Pl{\"u}cker functions to strong $F$-matroids in Theorem~\ref{thm:A} can be described explicitly as follows.  
Let $B_\varphi$ be the {\bf support} of $\varphi$, i.e., the collection of all subsets $\{ x_1,\ldots,x_r \} \subseteq E$ such that $\varphi(x_1,\ldots,x_r) \neq 0$.  Then $B_{\varphi}$ is the set of bases for a rank $r$ matroid $M_{\varphi}$ (in the usual sense) on $E$ (cf.~\cite[Remark 2.5]{AndersonDelucchi}).
For each circuit $C$ of $M_{\varphi}$, we define a corresponding projective $F$-circuit
$X \in {\mathbb P}(F^E)$ with ${\rm supp}(X) = C$ as follows.  Let $x_0 \in C$ and let $\{ x_1,\ldots,x_r \}$ be a basis for $M_{\varphi}$ containing $C \backslash x_0$.  Then 
\begin{equation} \label{eq:CircuitsFromGP}
\frac{X(x_i)}{X(x_0)} = (-1)^i \frac{\varphi(x_0,\ldots,\hat{x}_i,\ldots,x_r)}{\varphi(x_1,\ldots,x_r)}.
\end{equation}
We will show that this is well-defined, and give an explicit description of the inverse map from strong $F$-matroids to equivalence classes of Grassmann-Pl{\"u}cker functions.


\begin{remark}
When $F={\mathbb K}$ is the Krasner hyperfield, it is not difficult to see that (\ref{eq:GP3}) is equivalent to the following well-known condition characterizing the set of bases of a matroid (cf.~\cite[Condition (B2), p.17]{Oxley}):
\begin{itemize}
\item (Basis Exchange Axiom) Given bases $B,B'$ and $b \in B \backslash B'$, there exists $b' \in B' \backslash B$ such that $(B \cup \{ b' \}) \backslash \{ b \}$ is also a basis.
\end{itemize}
\end{remark}

\begin{defn}
 A {\bf weak Grassmann-Pl{\"u}cker function of rank $r$ on $E$ with coefficients in $F$} is a function $\varphi : E^r \to F$ such that the support of $\varphi$ is the set of bases of a rank $r$ matroid on $E$ and $\varphi$ satisfies (GP1), (GP2), and the following variant of (GP3):
\begin{itemize}
\item ${\rm (GP3)}'$ [3-term Grassmann--Pl{\"u}cker relations] Equation (\ref{eq:GP3}) holds for any two subsets $I=\{ x_1,\ldots,x_{r+1} \}$ and $J=\{ y_1,\ldots,y_{r-1} \}$ of $E$ with $|I \backslash J|=3$.
\end{itemize}
\end{defn}

It is clear that any { strong} Grassmann-Pl\"ucker function is also a weak Grassmann-Pl\"ucker function.

\begin{theorem} \label{thm:Aprime}
Let $E$ be a non-empty finite set, let $F$ be a tract, and let $r$ be a positive integer. 
There is a natural bijection between equivalence classes of weak Grassmann-Pl{\"u}cker functions of rank $r$ on $E$ with coefficients in $F$ and weak $F$-matroids of rank $r$ on $E$, defined via axioms {\rm (C0)} through {\rm (C2)} and ${\rm (C3)}'$.
\end{theorem}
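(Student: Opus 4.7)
The plan is to parallel the structure of Theorem~\ref{thm:A}, but working only with $3$-term relations and modular pairs instead of the full modular family machinery. In one direction, starting from a weak Grassmann-Pl\"ucker function $\varphi$, define $M_\varphi$ to be the matroid whose bases are the support of $\varphi$ (guaranteed to be a matroid by hypothesis), and assign to each circuit $\underline{C}$ of $M_\varphi$ a projective $F$-circuit $X(\underline{C})$ by the same formula (\ref{eq:CircuitsFromGP}) used in Theorem~\ref{thm:A}. The set $\mathcal{C}$ of all scalar multiples of these $X(\underline{C})$ is our candidate $F$-circuit set.

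First I would verify that the formula (\ref{eq:CircuitsFromGP}) is independent of the auxiliary basis $\{x_1,\dots,x_r\}$ containing $\underline{C}\setminus\{x_0\}$, up to a common scalar. Any two such bases differ by a sequence of single-element exchanges within $\underline{C}$, and each exchange changes two values of $\varphi$ in a way governed by a $3$-term Grassmann-Pl\"ucker relation (an instance of ${\rm (GP3)}'$ with $|I\setminus J|=3$); the ratios appearing in (\ref{eq:CircuitsFromGP}) are then constant. Axioms (C0), (C1), (C2) are immediate from the construction together with the fact that distinct circuits of $\underline{M}$ are incomparable. The key step is verifying modular elimination ${\rm (C3)}'$: given a modular pair $X,Y\in\mathcal{C}$ with $X(e)=-Y(e)\ne 0$, use Lemma~\ref{lem:white}(5) to realize $\underline{X}=C(B,e_1)$ and $\underline{Y}=C(B,e_2)$ as fundamental circuits of a common basis $B$. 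One then chooses $I=(B\setminus\{b\})\cup\{e_1,e_2,e\}$ and $J=B\setminus\{b,e\}$ for an appropriate $b\in B$ to obtain a $3$-term instance of ${\rm (GP3)}'$; expanding using (\ref{eq:CircuitsFromGP}) converts this relation into precisely the statement that there exists $Z\in\mathcal{C}$ with $Z(e)=0$ and $X(f)+Y(f)-Z(f)\in N_G$ for all $f$.

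For the inverse direction, given a weak $F$-matroid $M$ with underlying matroid $\underline{M}$, I would construct $\varphi$ on ordered $r$-tuples as follows: fix a reference ordered basis $B_0=(b_1,\dots,b_r)$ and set $\varphi(B_0)=1$; extend by alternation to all reorderings of $B_0$; and for any other basis $B$, join $B_0$ to $B$ by a sequence of single-element symmetric exchanges $B_i\to B_{i+1}$, and at each step use the corresponding fundamental $F$-circuit $X_{B_i,e}$ to define the ratio $\varphi(B_{i+1})/\varphi(B_i)$. Non-basis $r$-tuples and repeats are sent to $0$. Well-definedness (independence of the path) reduces to the case of two elementary exchanges, which is exactly a modular pair of circuits and is controlled by ${\rm (C3)}'$; alternation is built in, and ${\rm (GP3)}'$ for a particular $(I,J)$ with $|I\setminus J|=3$ similarly translates, after clearing denominators, into a modular elimination identity for the two circuits supported on $I$ associated with the near-basis $J$.

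The main obstacle is the well-definedness of $\varphi$ under basis exchange: one has to check that two different sequences of exchanges from $B_0$ to $B$ yield the same value, and this is exactly where the $3$-term Grassmann-Pl\"ucker relations must match up with modular elimination. By classical matroid theory (cf.\ Lemma~\ref{lem:white}), any two such sequences are connected by elementary ``square'' moves involving modular pairs of fundamental circuits, so the equivalence of the two constructions ultimately rests on the correspondence between modular pairs and $3$-term relations that was already used in one direction. Once this is established, checking that the two constructions are mutually inverse (up to the equivalence relation on $\varphi$ and the scalar action on $\mathcal{C}$) is a direct verification by applying (\ref{eq:CircuitsFromGP}) to the constructed $\varphi$ and recovering the original $X(\underline{C})$.
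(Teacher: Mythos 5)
Your forward direction (from a weak Grassmann-Pl\"ucker function to a weak circuit set) is essentially the paper's: well-definedness of the ratios via the $3$-term relations is Lemma~\ref{lem:blergh}, the construction is Definition~\ref{defn:Cphi}, and modular elimination is the $k=1$ case of Theorem~\ref{thm:Prop5.3}, where the point is exactly that the single elimination element forces $|\{x_1,\ldots,x_{r+1}\}\setminus\{y_1,\ldots,y_{r-1}\}|\leq 3$ so that ${\rm (GP3)}'$ suffices. The real divergence, and the gap, is in your inverse direction. You propose to build $\varphi$ directly from the circuit data by fixing a reference basis and propagating ratios along chains of single-element exchanges, and you dispose of path-independence by asserting that any two exchange sequences are connected by elementary ``square'' moves each controlled by a modular pair and ${\rm (C3)}'$. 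That assertion is precisely the hard content of the theorem, and nothing in Lemma~\ref{lem:white} gives it to you: Lemma~\ref{lem:white} characterizes modular pairs but says nothing about the cycle space of the basis-exchange graph being generated by moves of the required type, nor does it show that the ratio consistency around each such elementary cycle follows from circuit elimination alone. Likewise, your claim that ${\rm (GP3)}'$ for a given $(I,J)$ ``translates, after clearing denominators, into a modular elimination identity'' is stated but not carried out, and it is not obviously true as stated: in the paper's chain of reasoning, ${\rm (GP3)}'$ is never derived from circuit elimination directly.

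The paper avoids exactly these difficulties by interposing dual pairs: from a weak circuit set it first constructs the cocircuit signature $\cD$ and proves the weak orthogonality ${\rm (DP3)}'$ (Theorem~\ref{thm:Prop5.6}, which is itself a nontrivial use of modular elimination), and only then builds $\varphi$ from the dual pair $(\cC,\cD)$ (Theorem~\ref{thm:Prop4.6}, following Anderson--Delucchi): there, the consistency of the ratios of $\varphi$ between bases differing in two elements --- your path-independence problem --- is established using both the circuit and the cocircuit signatures, and ${\rm (GP3)}'$ is verified from the orthogonality relation $X\perp Y$ restricted to the case $|\underline{X}\cap\underline{Y}|\leq 3$. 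So your proposal either needs to supply a genuine proof of path-independence and of ${\rm (GP3)}'$ from ${\rm (C3)}'$ alone (in effect re-proving Theorems~\ref{thm:Prop5.6} and~\ref{thm:Prop4.6} in a compressed form), or it should be rerouted through the dual-pair construction as the paper does. As written, the inverse direction is an outline of what must be proved rather than a proof.
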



\subsection{Grassmannians over hyperfields}
\label{sec:Dressian}

For concreteness and ease of notation, write $E=\{ e_1,\ldots,e_m \}$ and let $S$ denote the collection of $r$-element subsets of $\{ 1,\ldots, m\}$, so that $|S|=\binom{m}{r}$.
Given a Grassmann-Pl{\"u}cker function $\varphi$, define the corresponding {\bf Pl{\"u}cker vector} $p = (p_I)_{I \in S} \in F^S$ by $p_I := \varphi(e_{i_1},\ldots,e_{i_r})$,
where $I = \{ i_1,\ldots,i_r \}$ and $i_1 < \cdots < i_r$.  Clearly $\varphi$ can be recovered uniquely from $p$.  The vector $p$ satisfies an analogue of the Grassmann--Pl{\"u}cker relations (GP3); for example, the 3-term relations can be rewritten as follows:
for every $A \subset \{ 1,\ldots,m \}$ of size $r-2$ and $i,j,k,\ell \in \{ 1,\ldots,m \} \backslash A$, we have
\begin{equation}
\label{eq:3termGP}
p_{A \cup i \cup j} \cdot p_{A \cup k \cup \ell}  -p_{A \cup i \cup k} \cdot p_{A \cup j \cup \ell} + p_{A \cup i \cup \ell} \cdot p_{A \cup j \cup k} \in N_G.
\end{equation}

More generally, for all subsets $I,J$ of $\{ 1,\ldots, m \}$ with $|I|=r+1$, $|J|=r-1$, and $|I \backslash J| \geq 3$, the point $p=(p_I)$ lies on the ``subvariety'' of the projective space in the $\binom{m}{r}$ homogeneous variables $x_I$ for $I \in S$ defined by
\begin{equation}
\label{eq:GP}
\sum_{i \in I} {\rm sign}(i;I,J) x_{J \cup i} x_{I \backslash i} \in N_G,
\end{equation}
where ${\rm sign}(i;I,J)=(-1)^s$ with $s$ equal to the number of elements $i' \in I$ with $i < i'$ plus the number of elements $j \in J$ with $i < j$.

{ 
Although we will not explore this further in the present paper, when $F$ is a hyperfield one can view the ``equations'' (\ref{eq:GP}) as defining a hyperring scheme $G(r,m)$ 
in the sense of \cite{JunHyperringScheme}, which we call the {\bf $F$-Grassmannian}.
In this geometric language, Theorem~\ref{thm:A} says that a strong matroid of rank $r$ on $\{ 1,\ldots, m \}$ over a hyperfield $F$ can be identified with an $F$-valued point of $G(r,m)$; thus $G(r,m)$ is a ``moduli space'' for rank $r$ matroids over $F$.
If $F=K$ is a field, the $K$-Grassmannian $G(r,m)$ coincides with the usual Grassmannian variety over $K$.
If $F={\mathbb T}$ is the tropical hyperfield, the ${\mathbb T}$-Grassmannian $G(r,m)$ is what Maclagan and Sturmfels \cite[\S{4.4}]{MaclaganSturmfels} call the {\bf Dressian} $D(r,m)$ (in order to distinguish it from a tropicalization of the Pl{\"u}cker embedding of the usual Grassmannian).
}

\subsection{Duality} \label{sec:Duality}

There is a duality theory for matroids over tracts which generalizes the established duality theory for matroids, oriented matroids, valuated matroids, etc.  (For matroids over fields, it corresponds to orthogonal complementation.)


\begin{theorem} \label{thm:B}
Let $E$ be a non-empty finite set with $|E|=m$, let $F$ be a tract {endowed with an involution $x \mapsto \involution{x}$},
and let $M$ be a strong (resp. weak) $F$-matroid of rank $r$ on $E$ with strong (resp. weak) $F$-circuit set ${\mathcal C}$ and Grassmann-Pl{\"u}cker function (resp. weak Grassmann-Pl{\"u}cker function) $\varphi$.
There is a strong (resp. weak) $F$-matroid $M^*$ of rank $m-r$ on $E$, called the {\bf dual matroid} of $M$, with the following properties:
\begin{itemize}
\item The $F$-circuits of $M^*$ are the elements of ${\mathcal C}^* := {\rm SuppMin}({\mathcal C}^\perp - \{ 0 \})$, where ${\rm SuppMin}(S)$ denotes the elements of $S$ of minimal support.
\item A Grassmann-Pl{\"u}cker function (resp. weak Grassmann-Pl{\"u}cker function) $\varphi^*$ for $M^*$ is defined by the formula
\[
\varphi^*(x_1,\ldots,x_{m-r}) = {\rm sign}(x_1,\ldots,x_{m-r},x_1',\ldots,x_r') \involution{\varphi(x_1',\ldots,x_r')},
\]
where $x_1',\ldots,x_r'$ is any ordering of $E \backslash \{ x_1,\ldots,x_{m-r} \}$.
\item The underlying matroid of $M^*$ is the dual of the underlying matroid of $M$, i.e., $\underline{M^*} = \underline{M}^*$.
\item $M^{**} = M$.
\end{itemize}
\end{theorem}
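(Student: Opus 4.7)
The natural plan is to work with the Grassmann--Pl\"ucker picture first and deduce the circuit description at the end. The first step is to take $\varphi^\ast$ as defined in the statement and verify it is a (strong or weak) Grassmann--Pl\"ucker function of rank $m-r$. That $\varphi^\ast$ is well-defined independent of the ordering of the $x_i'$, alternating, and not identically zero is routine, since $\varphi$ itself is alternating and nonzero. The real work is verifying that $\varphi^\ast$ satisfies the Grassmann--Pl\"ucker relations (GP3) (or, in the weak case, only the three-term relations $(\mathrm{GP3})'$). Here the idea is to rewrite each term of the alleged relation for $\varphi^\ast$ by complementation: the factor $\varphi^\ast(\cdot)$ becomes a signed $\overline{\varphi(\cdot)}$ evaluated on the complementary index set, and after collecting a global sign that factors out, what remains is (after applying the involution, which is a tract homomorphism and hence preserves $N_G$) a Grassmann--Pl\"ucker relation for $\varphi$ on a complementary pair of index sets. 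The sign bookkeeping involved in this matching is the main obstacle in the proof; it is the tract-theoretic incarnation of the classical duality for Pl\"ucker coordinates.

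Granting that $\varphi^\ast$ is a (strong or weak) Grassmann--Pl\"ucker function, Theorems~\ref{thm:A} and~\ref{thm:Aprime} furnish a (strong or weak) $F$-matroid $M^\ast$ of rank $m-r$. Its support consists of precisely those $(m-r)$-subsets whose complements are bases of $\underline{M}$; these are the bases of $\underline{M}^\ast$, which gives $\underline{M^\ast}=\underline{M}^\ast$. For the involution property $M^{\ast\ast}=M$, a direct computation shows $\varphi^{\ast\ast}(x_1,\ldots,x_r) = c \cdot \overline{\overline{\varphi(x_1,\ldots,x_r)}} = c \cdot \varphi(x_1,\ldots,x_r)$, where $c \in F^\times$ is a global sign depending only on $m$ and $r$ arising from the product of the two sign factors. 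Since Grassmann--Pl\"ucker functions are only considered up to a scalar, this yields $M^{\ast\ast}=M$.

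It remains to identify the $F$-circuits of $M^\ast$ with ${\rm SuppMin}(\mathcal{C}^\perp \setminus \{0\})$. Using formula~(\ref{eq:CircuitsFromGP}) on both sides, I would first check orthogonality directly: for an $F$-circuit $X\in\mathcal{C}$ with support $C$ a circuit of $\underline M$, and a circuit $Y$ of $M^\ast$ with support $D$ a cocircuit of $\underline M$, expanding $X\cdot Y = \sum_{f\in C\cap D}X(f)\overline{Y(f)}$ in terms of $\varphi$ and $\varphi^\ast$ reduces the sum (up to a common unit factor) to an instance of the Grassmann--Pl\"ucker relation (GP3) applied to $\varphi$ for appropriate index sets, hence lies in $N_G$. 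For the reverse inclusion, the supports appearing in ${\rm SuppMin}(\mathcal{C}^\perp\setminus\{0\})$ are precisely the cocircuits of $\underline M$ (a standard matroid-theoretic fact derived from the characterization of cocircuits as minimal sets meeting every basis), which coincide with the supports of $F$-circuits of $M^\ast$; on a given cocircuit $D$, axiom (C2) and the orthogonality of $Y$ with every element of $\mathcal{C}$ pin $Y$ down up to a scalar, and one final application of a Grassmann--Pl\"ucker relation (involving a single element of $C\cap D$) shows this scalar must agree with the one prescribed by~(\ref{eq:CircuitsFromGP}).
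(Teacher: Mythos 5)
Your proposal follows essentially the same route as the paper: show $\varphi^*$ is a (strong/weak) Grassmann--Pl{\"u}cker function of rank $m-r$ with underlying matroid $\underline{M}^*$ (Lemma~\ref{lem:Lemma3.2}), verify circuit--cocircuit orthogonality by reducing $X\cdot Y$ to an instance of (GP3) or ${\rm (GP3)}'$ (Lemma~\ref{lem:Prop4.3}), and identify ${\rm SuppMin}(\cC^\perp\setminus\{0\})$ with the cocircuit signature (Lemma~\ref{lem:Prop5.8}), all glued together by the cryptomorphism of Theorem~\ref{thm:maintheorem}, with $M^{**}=M$ coming from $\varphi^{**}$ being a unit multiple of $\varphi$. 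The only point stated a bit too casually is your claim that the minimal supports in $\cC^\perp\setminus\{0\}$ are the cocircuits ``by a standard matroid-theoretic fact'': besides the fact that a set containing no cocircuit has spanning complement, one also needs the tract-level observation that a single nonzero term of $G$ cannot lie in $N_G$, so a nonzero $Y\in\cC^\perp$ avoiding every cocircuit would fail orthogonality against a fundamental circuit meeting $\underline{Y}$ in exactly one element --- which is precisely the content of Lemma~\ref{lem:Prop5.8}.
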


The $F$-circuits of $M^*$ are called the {\bf $F$-cocircuits} of $M$, and vice-versa.

\subsection{Dual pairs}
\label{sec:DualPairs}

Let $F$ be a tract {endowed with an involution $x \mapsto \involution{x}$}, and
let $M$ be a (classical) matroid with ground set $E$.  We call a subset ${\mathcal C}$ of $F^E$ an {\bf $F$-signature of $M$} if ${\mathcal C}$ satisfies properties (C0) and (C1) from Definition~\ref{def:Fcircuits}, and taking supports gives a bijection from the projectivization of ${\mathcal C}$ to circuits of $M$.

\begin{defn}
We say that $({\mathcal C},{\mathcal D})$ is a {\bf dual pair of $F$-signatures of $M$} if:
\begin{itemize}
\item (DP1) ${\mathcal C}$ is an $F$-signature of the matroid $M$.
\item (DP2) ${\mathcal D}$ is an $F$-signature of the dual matroid $M^*$.
\item (DP3) ${\mathcal C} \perp {\mathcal D}$, meaning that $X \perp Y$ for all $X \in \cC$ and $Y \in \cD$.
\end{itemize}
\end{defn}

\begin{theorem} \label{thm:C}
Let $M$ be a matroid on $E$,  let ${\mathcal C}$ be an $F$-signature of $M$, and let ${\mathcal D}$ be an $F$-signature of $M^*$.  Then ${\mathcal C}$ and ${\mathcal D}$ are the set of $F$-circuits and $F$-cocircuits, respectively, of a strong $F$-matroid with underlying matroid $M$ if and only if 
 $({\mathcal C},{\mathcal D})$ satisfies {\rm (DP3)}  (i.e., is a dual pair of $F$-signatures of $M$).
\end{theorem}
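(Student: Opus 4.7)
The forward direction is immediate from Theorem~\ref{thm:B}: if $M'$ is a strong $F$-matroid with $F$-circuit set $\cC$ and $F$-cocircuit set $\cD$, and $\underline{M'} = M$, then Theorem~\ref{thm:B} applied to $M'$ yields $\cD = {\rm SuppMin}(\cC^\perp \setminus \{0\})$, and in particular $\cC \perp \cD$.

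For the backward direction, the strategy is to verify axiom (C3) for $\cC$ via its equivalent reformulation ${\rm (C3)}''$ from Theorem~\ref{thm:C3primeprime}. Axioms (C0)-(C2) follow from $\cC$ being an $F$-signature, and the support of $\cC$ is by hypothesis the set of circuits of $M$; so the only thing to check is that for every $X \in \cC$, every basis $B$ of $M$, and every $f \in E$,
\[
X(f) - \sum_{e \in E \setminus B} X(e)\, X_{B,e}(f) \in N_G,
\]
where $X_{B,e}$ denotes the unique element of $\cC$ with $X_{B,e}(e) = 1$ whose support is the fundamental circuit $C(B,e)$.

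The case $f \in E \setminus B$ is immediate: since $\underline{X_{B,e}} \subseteq B \cup \{e\}$, only the term $e=f$ survives, giving $X(f) - X(f)\cdot 1 = 0 \in N_G$. For $f \in B$, I invoke the fundamental cocircuit $Y_{B,f} \in \cD$, which is unique up to a scalar because $\cD$ is an $F$-signature of $M^*$; its support is $C^*(B,f) = \{f\} \cup \{e \in E \setminus B : f \in C(B,e)\}$. A short combinatorial check shows $\underline{X_{B,e}} \cap \underline{Y_{B,f}} = \{f,e\}$ whenever $f \in C(B,e)$, so the orthogonality relation $X_{B,e} \perp Y_{B,f}$ reduces to a two-term sum in $N_G$, and Lemma~\ref{lem:negatives}(a) gives
\[
\overline{Y_{B,f}(e)} = -X_{B,e}(f)\, \overline{Y_{B,f}(f)}.
\]
Substituting this identity into the expansion of $X \perp Y_{B,f}$ (which is a sum over $C^*(B,f)$) and canceling the nonzero scalar $\overline{Y_{B,f}(f)} \in G$ via axiom (T3) yields exactly the desired membership in $N_G$.

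Once ${\rm (C3)}''$ is verified, Theorem~\ref{thm:C3primeprime} delivers (C3), so $\cC$ is the $F$-circuit set of a strong $F$-matroid $M'$ with $\underline{M'} = M$. It remains to identify $\cD$ with the $F$-cocircuit set of $M'$. By Theorem~\ref{thm:B}, the latter equals $\cD' := {\rm SuppMin}(\cC^\perp \setminus \{0\})$. Since $\cD \subseteq \cC^\perp$ by (DP3) and the supports of elements of $\cD$ are precisely the cocircuits of $M$, which are already minimal among supports in $\cC^\perp \setminus \{0\}$, we have $\cD \subseteq \cD'$; equality follows because both sets are unions of scalar classes indexed by the cocircuits of $M$, each cocircuit support determining a unique scalar class (by (C2) for $\cD$ and by the uniqueness content of Theorem~\ref{thm:B} for $\cD'$). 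The main technical obstacle is the careful bookkeeping of the involution $x \mapsto \overline{x}$ in the orthogonality computations of the $f \in B$ case; once that is handled, the rest of the argument is essentially administrative.
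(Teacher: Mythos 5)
Your argument is essentially correct, but it takes a genuinely different route from the paper's. The paper obtains Theorem~\ref{thm:C} as part of the cryptomorphism in Theorem~\ref{thm:maintheorem}: from a dual pair it reconstructs a Grassmann--Pl\"ucker function (Theorem~\ref{thm:Prop4.6}, the long argument adapted from Anderson--Delucchi), then deduces the strong elimination axiom (C3) from the Grassmann--Pl\"ucker relations (Theorem~\ref{thm:Prop5.3}); the converse is Theorem~\ref{thm:Prop5.6}. You instead prove ${\rm (DP3)}\Rightarrow{\rm (C3)}$ directly: pairing $X$ and the fundamental circuits $X_{B,e}$ against the fundamental cocircuits $Y_{B,f}\in\cD$, using Lemma~\ref{lem:negatives}(a) on the two-term orthogonality relations and factoring out $\overline{Y_{B,f}(f)}$ via (T3), gives exactly the relation (\ref{eq:lincomb}), and Theorem~\ref{thm:C3primeprime} (whose proof is independent of Theorems~\ref{thm:A}--\ref{thm:C}) converts ${\rm (C3)}''$ into (C3). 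Your central computation checks out, including the bookkeeping of the involution and the observation that the terms with $f\notin C(B,e)$ vanish. What your route buys is a short, conceptual proof of the hard implication that bypasses the Grassmann--Pl\"ucker reconstruction entirely; what it does not give is the (GP) vertex of the cryptomorphism, which the paper's proof produces simultaneously.

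Two caveats. First, you invoke Theorem~\ref{thm:B} both for the forward direction and to identify $\cD$ with the cocircuit set of $M'$; in the paper, Theorem~\ref{thm:B} is deduced from Theorem~\ref{thm:maintheorem} (i.e.\ from Theorems~\ref{thm:A} and \ref{thm:C}), so as written your proof is circular relative to the paper's development. The circularity is one of citation rather than substance, since the content of Theorem~\ref{thm:B} that you use is proved from Lemmas~\ref{lem:Lemma3.2}, \ref{lem:Prop4.3}, \ref{lem:Prop5.8} and Theorems~\ref{thm:Prop4.6}, \ref{thm:Prop5.3}, \ref{thm:Prop5.6}, none of which presupposes Theorem~\ref{thm:C}; but you should cite those results (for instance, the uniqueness statement of Theorem~\ref{thm:Prop5.6} together with Lemma~\ref{lem:Prop5.8} identifies your $\cD$ with the cocircuit set) rather than Theorem~\ref{thm:B} as a black box. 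Second, your assertion that the cocircuits of $M$ are support-minimal in $\cC^\perp\setminus\{0\}$ is stated without justification. It is true: if $0\neq Z\in\cC^\perp$ had support a nonempty proper subset $S$ of a cocircuit $D$, then choosing $g\in D\setminus S$ and $e\in S$ one finds a circuit $C\subseteq (E\setminus D)\cup\{e,g\}$ with $C\cap S=\{e\}$, so for $X\in\cC$ with $\underline X=C$ the inner product $X\cdot Z$ is a single element of $G$, contradicting Lemma~\ref{lem:negatives}(c). This is essentially the content of Lemma~\ref{lem:Prop5.8}, but it needs to be said (or that lemma cited) for your final identification $\cD=\cD'$ to be complete.
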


\begin{defn}
We say that $({\mathcal C},{\mathcal D})$ is a {\bf weak dual pair of $F$-signatures of $M$} if ${\mathcal C}$ and ${\mathcal D}$ satisfy (DP1),(DP2), and the following weakening of (DP3):
\begin{itemize}
\item ${\rm (DP3)}'$ $X\perp Y$ for every pair $X \in \cC$ and $Y \in \cD$ with $|\underline{X} \cap \underline{Y}| \leq 3$.
\end{itemize}
\end{defn}

\begin{theorem} \label{thm:Cprime}
Let $M$ be a matroid on $E$,  let ${\mathcal C}$ be an $F$-signature of $M$, and let ${\mathcal D}$ be an $F$-signature of $M^*$.  Then ${\mathcal C}$ and ${\mathcal D}$ are the set of $F$-circuits and $F$-cocircuits, respectively, of a weak $F$-matroid with underlying matroid $M$ if and only if 
 $({\mathcal C},{\mathcal D})$ satisfies ${\rm (DP3)}'$  (i.e., is a weak dual pair of $F$-signatures of $M$).
\end{theorem}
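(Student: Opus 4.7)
Plan: I would prove Theorem~\ref{thm:Cprime} in close analogy with the strong version Theorem~\ref{thm:C}, tracking throughout how the ``weak/three-term'' conditions match up: the restriction $|\underline X \cap \underline Y| \leq 3$ in ${\rm (DP3)}'$ corresponds to the three-term Grassmann-Pl\"ucker relations ${\rm (GP3)}'$, which correspond via Theorem~\ref{thm:Aprime} to modular elimination ${\rm (C3)}'$. Both directions amount to carefully tracking this correspondence.

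For the forward direction, assume $\cC$ and $\cD$ are the $F$-circuits and $F$-cocircuits of a weak $F$-matroid $M$. Axioms (DP1) and (DP2) follow from the weak version of Theorem~\ref{thm:B}, and only ${\rm (DP3)}'$ needs checking. For $X \in \cC$ and $Y \in \cD$ with $|\underline X \cap \underline Y| \leq 3$, classical circuits and cocircuits of $\underline M$ never meet in exactly one element, so the intersection has size $0$, $2$, or $3$; the empty case is trivial. In the remaining cases, expand $X$ via (\ref{eq:CircuitsFromGP}) applied to a weak Grassmann-Pl\"ucker function $\varphi$ for $M$, and analogously for $Y$ using $\varphi^*$ from Theorem~\ref{thm:B}. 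After clearing common nonzero scalars, $X \cdot Y$ takes exactly the shape of a three-term Grassmann-Pl\"ucker expression (\ref{eq:3termGP}) for $\varphi$, which lies in $N_G$ by ${\rm (GP3)}'$.

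For the backward direction, it suffices to verify the weak circuit axioms for $\cC$; a parallel argument (or symmetry, once duality is in hand) then identifies $\cD$ as the cocircuit set. Axioms (C0), (C1), (C2) follow from $\cC$ being an $F$-signature of $\underline M$. For modular elimination ${\rm (C3)}'$, given a modular pair $X, Y \in \cC$ with $X(e) = -Y(e) \neq 0$, Lemma~\ref{lem:white}(4) yields a unique circuit $C_3 \subseteq (\underline X \cup \underline Y) \setminus e$ of $\underline M$; pick any $Z_0 \in \cC$ with $\underline{Z_0} = C_3$. We must exhibit $\lambda \in F^\times$ such that $Z := \lambda Z_0$ satisfies $X(f) + Y(f) - Z(f) \in N_G$ for every $f \in E$, which by Lemma~\ref{lem:negatives}(a) amounts to a single scalar equation for each $f \in C_3$. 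The correct $\lambda$ is pinned down by orthogonality against fundamental $F$-cocircuits $Y' \in \cD$: for each $f \in C_3$, one can choose a basis $B$ of $\underline M$ containing $(\underline X \cup \underline Y) \setminus \{e,f\}$, and the fundamental cocircuit of $B$ through $e$ (or $f$) meets $\underline X \cup \underline Y$ in at most three elements, so ${\rm (DP3)}'$ applies to $X$, $Y$, and $Z_0$ against $Y'$. The main obstacle is the consistency check: as $f$ and $Y'$ vary, these equations must determine a single $\lambda$, and the resulting $Z$ must additionally satisfy $X(f) + Y(f) \in N_G$ for every $f \notin \underline Z$. This is precisely where the modular-pair hypothesis is essential, since Lemma~\ref{lem:white} guarantees the existence of the necessary low-intersection cocircuits; the argument closely mirrors the corresponding (technically involved) step in the strong case, and a number of the verifications can be imported {\em mutatis mutandis} from \cite{AndersonDelucchi}.
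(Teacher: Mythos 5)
Your forward direction is, in outline, the paper's own route: once Theorem~\ref{thm:Aprime} and the weak case of Theorem~\ref{thm:B} are in hand, the computation you sketch is exactly the one carried out in Lemma~\ref{lem:Prop4.3} (the paper also proves this implication straight from the circuit axioms in Theorem~\ref{thm:Prop5.6}), so that half is fine modulo the fact that you are quoting results whose proofs already contain it.

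The backward direction has a genuine gap, and it sits at the heart of the theorem. The paper never derives ${\rm (C3)}'$ directly from ${\rm (DP3)}'$: it first reconstructs a weak Grassmann-Pl\"ucker function $\varphi$ from the weak dual pair (Theorem~\ref{thm:Prop4.6}, the long Step 1--Step 3 argument adapted from \cite{AndersonDelucchi}), and only then deduces modular elimination from the three-term Grassmann-Pl\"ucker relations (Theorem~\ref{thm:Prop5.3}). Your plan --- scale $Z_0$ by a $\lambda$ ``pinned down by orthogonality'' and verify $X(f)+Y(f)-Z(f)\in N_G$ coordinate by coordinate --- leaves precisely the hard part (your ``consistency check'') unproved, and in a general tract there is a structural obstruction to doing it this way. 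Each instance of ${\rm (DP3)}'$ is a single membership $\sum_g W(g)Y'(g)\in N_G$ involving the values of \emph{one} element $W$ of $\cC$ at several coordinates, whereas the membership you need involves the values of \emph{three} different elements of $\cC$ (namely $X$, $Y$, $Z$) at \emph{one} coordinate; no orthogonality relation ever mixes two circuit-signature elements, and since $N_G$ is only assumed closed under the $G$-action --- it need not be closed under addition (e.g.\ the initial tract ${\mathbb I}$) --- you cannot add or subtract different orthogonality relations to manufacture the required membership. Lemma~\ref{lem:negatives}(a) extracts information only from relations with exactly two nonzero terms, so it can force $\lambda$ (via coordinates in $\underline X\setminus\underline Y$), but it gives nothing at coordinates where $X(f)$, $Y(f)$, $Z(f)$ are all nonzero; such coordinates genuinely occur, already for any modular pair of circuits in $U_{2,4}$. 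Your auxiliary cocircuits do not rescue this: ``the fundamental cocircuit of $B$ through $e$'' is undefined since $e,f\notin B$, and the fundamental cocircuits of elements $b\in B$ meet $\underline X\cup\underline Y$ in up to three elements $\{b,e,f\}$, so again only three-term relations arise. Finally, the appeal to importing the remaining verifications ``mutatis mutandis'' from \cite{AndersonDelucchi} is misplaced: the corresponding argument there (their Proposition 3.6) is exactly the dual-pair-to-Grassmann-Pl\"ucker reconstruction that your outline tries to bypass; neither that paper nor this one contains a direct dual-pair-to-elimination argument to import.
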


\subsection{Minors}
\label{sec:minors}

Let ${\mathcal C}$ be the set of $F$-circuits of a (strong or weak) $F$-matroid $M$ on $E$, and let $A \subseteq E$.
For $X \in {\mathcal C}$, define $X \backslash A \in F^{E \backslash A}$ by $(X \backslash A)(e) = X(e)$ for $e \not\in A$.
(Thus $X \backslash A$ can be thought of as the restriction of $X$ to the complement of $A$.)

Let ${\mathcal C} \backslash A = \{ X \backslash A \; | \; X \in {\mathcal C}, \; \underline{X} \cap A = \emptyset \}$. 
Similarly, let ${\mathcal C} / A = {\rm SuppMin}(\{ X \backslash A \; | \; X \in {\mathcal C} \})$.  

\begin{theorem} \label{thm:D}
Let ${\mathcal C}$ be the set of $F$-circuits of a strong (resp. weak) $F$-matroid $M$ on $E$, and let $A \subseteq E$.
Then ${\mathcal C} \backslash A$ is the set of $F$-circuits of a strong (resp. weak) $F$-matroid $M \backslash A$ on $E \backslash A$, called the {\bf deletion} of $M$ with respect to $A$, whose underlying matroid is $\underline{M} \backslash A$.
Similarly, ${\mathcal C} / A$ is the set of $F$-circuits of a strong (resp. weak) $F$-matroid $M / A$ on $E \backslash A$, called the {\bf contraction} of $M$ with respect to $A$, whose underlying matroid is $\underline{M} / A$.
Moreover, we have $(M \backslash A)^* = M^* / A$ and $(M / A)^* = M^* \backslash A$.
\end{theorem}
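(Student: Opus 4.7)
My plan is to handle deletion directly from the circuit axioms and to reduce contraction to deletion via the duality Theorem~\ref{thm:B}, so that both the $F$-matroid structure on $M/A$ and its circuit identification come from duality.

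For deletion, axioms (C0)--(C2) for $\cC\setminus A$ hold immediately, since restriction to $E\setminus A$ preserves nonzero-ness for vectors supported off $A$, commutes with scalar multiplication, and inherits incomparability of supports.  The substantive point is modular elimination.  The key observation is that for any collection of circuits of $\underline M$ contained in $E\setminus A$, the interval from $\emptyset$ to their union is \emph{the same} in the lattice of unions of supports of $\cC$ as in the corresponding lattice for $\cC\setminus A$, because any intermediate union of circuits of $\underline M$ already lies in $E\setminus A$ and is hence a union of circuits of $\underline M\setminus A$.  It follows that modular pairs (resp.\ families) in $\cC\setminus A$ are modular pairs (resp.\ families) in $\cC$.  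Applying ${\rm (C3)}'$ (resp.\ (C3)) in $M$ then produces an $F$-circuit $Z\in\cC$ whose support lies in $\bigl(\bigcup_i\underline{X_i}\cup\underline X\bigr)\setminus\{e_1,\ldots,e_k\}\subseteq E\setminus A$, so $Z\setminus A$ serves as the required eliminant in $\cC\setminus A$.  That the underlying matroid is $\underline M\setminus A$ is immediate from the fact that the supports of elements of $\cC\setminus A$ are precisely the circuits of $\underline M$ contained in $E\setminus A$.

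For contraction, a direct verification of (C2) for $\cC/A$ is awkward: two non-proportional elements of $\cC$ could restrict to elements of $F^{E\setminus A}$ of the same minimal support, and forcing those restrictions to be proportional takes real work.  I would therefore \emph{define}
\[
M/A \;:=\; (M^{\ast}\setminus A)^{\ast},
\]
which by Theorem~\ref{thm:B} and the deletion case is a strong (resp.\ weak) $F$-matroid, with underlying matroid $(\underline M^{\ast}\setminus A)^{\ast}=\underline M/A$.  It then remains to identify its $F$-circuit set with $\cC/A$.  By Theorem~\ref{thm:B}, the $F$-circuits of $M/A$ are ${\rm SuppMin}\bigl((\cC^{\ast}\setminus A)^{\perp}\setminus\{0\}\bigr)$ in $F^{E\setminus A}$, where $\cC^{\ast}$ denotes the $F$-cocircuits of $M$.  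Orthogonality is easy: for $X\in\cC$ and $Y\in\cC^{\ast}$ with $\underline Y\cap A=\emptyset$ we have $(X\setminus A)\cdot Y = X\cdot Y\in N_G$ by Theorem~\ref{thm:C} (resp.\ \ref{thm:Cprime}), so $\{X\setminus A:X\in\cC\}\subseteq(\cC^{\ast}\setminus A)^{\perp}$.  The supports arising in $\cC/A$ are, by the classical description of contraction, exactly the circuits of $\underline M/A$; these are also the supports of the $F$-circuits of $M/A$.  Hence every element of $\cC/A$ is support-minimal in $(\cC^{\ast}\setminus A)^{\perp}\setminus\{0\}$ and so is an $F$-circuit of $M/A$; conversely, each $F$-circuit of $M/A$ shares its support with some element of $\cC/A$, and axiom (C2) in $M/A$ (now available) forces proportionality.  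The duality identity $(M/A)^{\ast}=M^{\ast}\setminus A$ is immediate from the definition and $M^{\ast\ast}=M$, and applying the same formula with $M^{\ast}$ in place of $M$ gives $(M\setminus A)^{\ast}=M^{\ast}/A$.

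The main obstacle is precisely this circuit identification step for contraction: reconciling the ``analytic'' description of the $F$-circuits of $M/A$ coming from duality with the ``combinatorial'' description $\cC/A$ in the theorem statement.  The strategy of defining $M/A$ via duality lets us import axiom (C2) for $M/A$ as a fact rather than prove it directly, which is where a naive axiomatic approach to $\cC/A$ would run into real trouble.
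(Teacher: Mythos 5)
Your deletion argument is correct (and is a direct circuit-axiom argument, unlike the paper, which routes everything through Grassmann--Pl\"ucker functions), but your contraction argument has a genuine gap in the weak case. The step $\{X\setminus A : X\in\cC\ \text{support-minimal}\}\subseteq(\cC^*\setminus A)^\perp$ requires $X\perp Y$ for every $X\in\cC$ and every $Y\in\cC^*$ with $\underline Y\cap A=\emptyset$, with no control on $|\underline X\cap\underline Y|$. Theorem~\ref{thm:Cprime}, which you cite, only yields ${\rm (DP3)}'$, i.e.\ orthogonality when $|\underline X\cap\underline Y|\leq 3$; full circuit--cocircuit orthogonality is (DP3), which by Theorem~\ref{thm:C} would force $M$ to be strong. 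For a weak-but-not-strong matroid (e.g.\ the ${\mathbb V}$- and ${\mathbb P}$-examples of \S\ref{sec:counterexample}) some circuit genuinely fails to be orthogonal to some cocircuit, and already for $A=\emptyset$ your route asserts $\cC\subseteq(\cC^*)^\perp$, which is false there. So no appeal to Theorem~\ref{thm:B} or to orthogonal complements can identify the circuit set of $(M^*\setminus A)^*$ with $\cC/A$ in the weak setting by this method; the strong half of your contraction argument, using Theorem~\ref{thm:C} and the ${\rm SuppMin}(\cC^\perp-\{0\})$ description, is fine.

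The paper sidesteps exactly this difficulty by defining minors at the level of Grassmann--Pl\"ucker functions: $\varphi\setminus A$ and $\varphi/A$ are again weak (resp.\ strong) Grassmann--Pl\"ucker functions with $(\varphi\setminus A)^*=\varphi^*/A$ (Lemma~\ref{lem:MinorLemma}), and $\cC_{\varphi\setminus e}=\cC_\varphi\setminus e$, $\cC_{\varphi/e}=\cC_\varphi/e$ (Lemma~\ref{lem:Prop4.3}); Theorems~\ref{thm:maintheorem} and \ref{thm:maintheorem'} then transfer this to circuit sets uniformly for strong and weak matroids, so full orthogonality is never needed. To repair your plan you would need a weak-case substitute for the orthogonality step, for instance verifying the circuit axioms for $\cC/A$ directly (the (C2) issue you flag is where the GP description does the work) or passing through weak Grassmann--Pl\"ucker functions as the paper does; duality together with ${\rm (DP3)}'$ alone does not pin down the circuits of $M/A$ as $\cC/A$.
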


\subsection{Equivalence of different definitions}
\label{sec:whythesame}

We briefly indicate how to see the equivalence of various flavors of matroids in the literature with our notions of strong and weak $F$-matroid, for some specific choices of the tract $F$.

\begin{example} \label{ex:fieldexample}
When $F=K$ is a field, a strong or weak matroid on $E$ with coefficients in $K$ is the same thing as a vector subspace of $K^E$ in the usual sense.  
Indeed, a weak Grassmann-Pl{\"u}cker function with coefficients in a field $K$ automatically satisfies (GP3) (cf.~the proof of \cite[Theorem 1]{KleimanLaksov}), and the bijection between $r$-dimensional subspaces of $K^E$ and equivalence classes of rank $r$ Grassmann-Pl{\"u}cker functions with coefficients in $K$ also follows from 
{\em loc.~cit.}
\end{example}

\begin{example}
A strong or weak matroid over ${\mathbb K}$ is 
the same thing as a matroid in the usual sense.  
\end{example}

\begin{example}
A strong or weak matroid over ${\mathbb T}$ is the same thing as a valuated matroid in the sense of Dress--Wenzel \cite{DressWenzelVM}.
This follows from \cite[Theorem 3.2]{MurotaTamura} and the discussion at the top of page 202 in {\em loc.~cit.}
\end{example}

\begin{example} \label{ex:signsexample}
A strong or weak matroid over ${\mathbb S}$ is the same thing as an {\bf oriented matroid} in the sense of Bland--Las Vergnas \cite{BlandLasVergnas}.
This follows for example from \cite[Theorems 3.5.5 and 3.6.2]{OM}.
\end{example}

\begin{example} \label{ex:weaksignsexample}
A strong or weak matroid over ${\mathbb W}$ is the same thing as a weakly oriented matroid in the sense of Bland and Jensen \cite{BlandJensen}.  This follows from the results of {\em loc.~cit.}
\end{example}


\begin{example} \label{regularexample}
A strong or weak matroid $M$ over a partial field $P$ is the same thing as a representation of $\underline{M}$ over $P$ in the sense of
\cite[Definition 2.4]{PvZSkew}.  This follows from \cite[Proof of Theorem 1]{KleimanLaksov} exactly as in Example~\ref{ex:fieldexample}, since that
argument works verbatim if one replaces the field of coefficients by a partial field.
In particular, a (strong or weak) matroid over the regular partial field ${\mathbb U}_0$ is the same thing as a regular matroid.  
\end{example}

\begin{example}\label{ex:initial}
A weak matroid over ${\mathbb I}$ is the same thing as a weak matroid over ${\mathbb U}_0$, since these tracts have the same underlying multiplicative group and their null sets contain the same elements having at most 3 summands. On the other hand, the matroids strongly representable over ${\mathbb I}$ are precisely the direct sums of matroids of the form $M(G)$ with $G$ a series-parallel network. Indeed, it is clear that no matroid strongly representable over ${\mathbb I}$ can have a circuit-cocircuit intersection of size 4. Hence such matroids also cannot have minors with such circuit-cocircuit intersections. In particular, they can have no $M(K_4)$-minors. But it is known that the only regular connected matroids with no $M(K_4)$-minor are those of the form $M(G)$ with $G$ a series-parallel network \cite[Corollary 11.2.15]{Oxley}. It is easy to check that any such matroid is strongly representable over ${\mathbb I}$.
\end{example}

\subsection{Weak $F$-matroids which are not strong $F$-matroids}
\label{sec:counterexample}
Even for hyperfields $F$, there are many examples of a weak $F$-matroids which are not strong $F$-matroids. Our first example of this phenomenon is over the triangle hyperfield.

\begin{example}
Let $F$ be the triangle hyperfield ${\mathbb V}$ (cf. Example~\ref{ex:triangle}).
Consider the Grassmann-Pl\"ucker function $\varphi$ of rank 3 on the 6 element set $E = \{1,2,\ldots 6\}$ with coeffiecients in $F$ given by
$$\varphi(x_1, x_2, x_3) = \begin{cases}4 &\text{if }\{x_1,x_2,x_3\} = \{1,5,6\}\\2 &\text{if $\{x_1, x_2,x_3\}$ consists of one element from each of}\\ &\text{$\{1\}$, $\{2, 3, 4\}$ and $\{5,6\}$}\\ 1 &\text{otherwise, as long as the $x_i$ are distinct}\\0 &\text{if the $x_i$ are not distinct.}\end{cases}$$ Then $\varphi$ is symmetric under permutation of the second, third and fourth coordinates, as well as under exchange of the fifth and sixth coordinates. It is clear that $\varphi$ satisfies (GP1) and (GP2), and the support of $\varphi$ is the set of bases of the uniform matroid $U_{3,6}$ of rank 3 on 6 elements. 
We will now verify that $\varphi$ also satisfies ${\rm (GP3)}'$. 

\medskip

Suppose we have subsets $I = \{x_1, x_2, x_3, x_4\}$ and $J = \{y_1,y_2\}$ of $E$ with $|I \setminus J| = 3$. Then $|I \cap J| = 1$. So all summands in the corresponding 3-term Grassmann-Pl\"ucker relation are in the set $\{0,1,2,4\}$. In order to show that the relation holds, it suffices to show that there cannot be summands equal to each of 1 and 4. So suppose for a contradiction that $\varphi(x_2,x_3,x_4) \odot \varphi(x_1, y_1, y_2) = 1$ but $\varphi(x_1, x_3, x_4) \odot \varphi(x_2, y_1, y_2) = 4$. Then we have $\varphi(x_2,x_3,x_4) = \varphi(x_1,y_1,y_2) = 1$. Since $\varphi(x_1, y_1, y_2)$ and $\varphi(x_2, y_1, y_2)$ are nonzero, neither $x_1$ nor $x_2$ is in $J$. If $\varphi(x_1, x_3, x_4) = 4$ then $x_1 = 1$ and $\{x_3, x_4\} = \{5,6\}$, but neither 5 nor 6 can be in $J$ since $\varphi(x_1, y_1, y_2) = 1$. So $I$ and $J$ are disjoint, which is impossible. If $\varphi(x_2, y_1, y_2) = 4$ then $x_2 = 1$ and $\{y_1, y_2\} = \{5,6\}$, but neither 5 nor 6 can be in $\{x_3, x_4\}$ since $\varphi(x_2, x_3, x_4) = 1$. So once more $I$ and $J$ are disjoint, which is impossible. The only remaining case is that $\varphi(x_1,x_3,x_4) = \varphi(x_2, y_1, y_2) = 2$. Thus both of these sets contain 1, so that without loss of generality $y_1 = 1$. Then $y_2 \not \in \{5,6\}$, so $x_2 \in \{5,6\}$. Thus neither of $x_3$ or $x_4$ can be 1, so $x_1 = 1$, so that $\varphi(x_1, y_1, y_2) = 0$, again a contradiction.

\medskip

On the other hand, not all of the Grassmann-Pl\"ucker relations are satisfied. Let $I = \{1,2,3,4\}$ and $J= \{5,6\}$. The corresponding Grassmann-Pl\"ucker relation is $0 \in (1 \odot 4) \boxplus (1 \odot 1) \boxplus (1 \odot 1) \boxplus (1 \odot 1)$, which is false.
\end{example}

Our second example is due to Daniel Wei\ss auer, and it shows that weak and strong matroids do not coincide over the phase hyperfield. It has only been verified by an exhaustive computer check, and so we do not provide a proof here. 

\begin{example}\label{eg:danscex}
Consider the weak Grassmann-Pl\"ucker function of rank 3 on the 6-element set $\{x,y,z,t,l,m\}$ given by
$$\begin{array}{rclrclrcl}
\varphi(x,y,z) &=& 1&
\varphi(x,y,t) &=& -1&
\varphi(x,z,t) &=& 1\\
\varphi(y,z,t) &=& -1&
\varphi(x,y,l) &=& e^{(0.9+\pi)i}&
\varphi(x,z,l) &=& e^{2.5i}\\
\varphi(y,z,l) &=& e^{5.5i}&
\varphi(x,t,l) &=& e^{(2.7+\pi)i}&
\varphi(y,t,l) &=& e^{(5.8-\pi)i}\\
\varphi(z,t,l) &=& e^{(0.3 + \pi)i}&
\varphi(x,y,m) &=& e^{(0.5 + \pi)i}&
\varphi(x,z,m) &=& e^{1.2i}\\
\varphi(y,z,m) &=& e^{3.8i}&
\varphi(x,t,m) &=& e^{(3 + \pi)i}&
\varphi(y,t,m) &=& e^{(5.1-\pi)i}\\
\varphi(z,t,m) &=& e^{(0.4 + \pi)i}&
\varphi(x,l,m) &=& e^{3.1i}&
\varphi(y,l,m) &=& e^{0.1i}\\
\varphi(z,l,m) &=& 1&
\varphi(t,l,m) &=& e^{3.1i}&&&
\end{array}$$
and with the remaining values determined by (GP2). This function satisfies ${\rm (GP3)}'$, but it does not satisfy (GP3). Consider for example the lists $(x,y,z,t)$ and $(l,m)$. Applying (GP3) to these lists gives $e^{3.1i} \oplus e^{0.1i} \oplus 1 \oplus e^{3.1i} \ni 0$, which is false.
\end{example}



\subsection{Functoriality}
\label{sec:functoriality}

In this section we discuss the behavior of matroids over tracts with respect to homomorphisms of the latter.


Recall that if $F$ is a tract and $M$ is an $F$-matroid on $E$, there is an underlying classical matroid $\underline{M}$, and that classical matroids are the same as matroids over the Krasner hyperfield ${\mathbb K}$.  We now show that the ``underlying matroid'' construction is a special case of a general push-forward operation on matroids over tracts.

The following lemma is straightforward from the various definitions involved:

\begin{lemma} \label{lem:pushforward}
If $f : F \to F'$ is a homomorphism of tracts and $M$ is a strong (resp. weak) $F$-matroid on $E$, 
\[
\{ c' f_*(X) \; : \; c' \in (F')^\times, \; X \in \cC(M) \}
\]
is the set of $F'$-circuits of a strong (resp. weak) $F'$-matroid $f_*(M)$ on $E$, called the {\bf push-forward} of $M$.
\end{lemma}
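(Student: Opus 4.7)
The plan is to verify the $F'$-circuit axioms directly. The key observation is that the extension of $f$ to $F \to F'$ (with $f(0) = 0$) preserves supports: since $f$ restricts to a group homomorphism $G \to G'$, we have ${\rm supp}(f_*(X)) = {\rm supp}(X)$ for every $X \in F^E$. Consequently, $\Scal := \{ c' f_*(X) \; : \; c' \in (F')^\times, \; X \in \cC(M) \}$ has the same support family as $\cC(M)$, namely the circuit set of $\underline{M}$. Axioms (C0) and (C1) are then immediate, and (C2) is inherited from (C2) for $M$: if ${\rm supp}(c_1' f_*(X_1)) \subseteq {\rm supp}(c_2' f_*(X_2))$, then $X_1 = \alpha X_2$ for some $\alpha \in F^\times$, whence $f_*(X_1) = f(\alpha) f_*(X_2)$.

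For the modular elimination axioms, my strategy is to lift everything to $M$ after arranging the scalars to match. Given $X', Y' \in \Scal$ (or a modular family $X_1', \ldots, X_k', X' \in \Scal$ in the strong case) satisfying the matching conditions $X'(e) = -Y'(e) \neq 0$ (resp.\ $X'(e_i) = -X_i'(e_i) \neq 0$), I would choose representatives $X, Y \in \cC(M)$ (resp.\ $X, X_1, \ldots, X_k$) and rescale $Y$ (resp.\ each $X_i$) by an element of $F^\times$, absorbing the change into the scalar $c_2'$ (resp.\ $c_i'$), so that the matching conditions hold verbatim in $F$, namely $X(e) = -Y(e)$ (resp.\ $X_i(e_i) = -X(e_i)$ for each $i$). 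The matching equation $X'(e) = -Y'(e)$ then forces the adjusted $c_2'$ to equal $c_1'$, say to a common value $c'$; in the strong case, the same computation at each $e_i$ (using that $f(X(e_i)) \neq 0$ since $X(e_i) \neq 0$) forces every $c_i'$ to equal the scalar $c'$ attached to $X'$.

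Once the scalars are aligned, (C3)$'$ (resp.\ (C3)) for $M$ produces $Z \in \cC(M)$ with $Z(e) = 0$ (resp.\ $Z(e_i) = 0$ for all $i$) and the appropriate null-set relation in $N_G$. Applying $f$, which carries $N_G$ into $N_{G'}$ by definition of a tract homomorphism, and then multiplying through by $c'$, which preserves $N_{G'}$ by axiom (T3), yields the required null-set relation in $N_{G'}$ with the candidate eliminator $Z' := c' f_*(Z) \in \Scal$. I expect the only nontrivial ingredient is the bookkeeping of the rescalings in the strong case, where one must track the interaction between the $k+1$ scaling factors; the point is that the matching conditions at the distinct witnesses $e_i$ pin down each $c_i'$ independently, making the alignment automatic. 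As a sanity check, one could alternatively deduce the lemma via Theorems~\ref{thm:A} and~\ref{thm:Aprime} by noting that $f \circ \varphi$ is a (strong or weak) Grassmann-Pl\"ucker function whenever $\varphi$ is, and then reading off the circuits of $f_*(M)$ from formula (\ref{eq:CircuitsFromGP}).
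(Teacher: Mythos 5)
Your proposal is correct and is essentially the argument the paper intends: the paper gives no proof of this lemma beyond calling it straightforward from the definitions, and your direct verification of (C0)--(C2) together with the scalar-alignment reduction of (C3)$'$/(C3) to the corresponding axiom for $M$ (using that a tract homomorphism preserves supports, products, negation, and carries $N_G$ into $N_{G'}$, with (T3) handling the final rescaling) is exactly that verification written out. The only step worth making explicit is the identity $f(-x)=-f(x)$ used in the alignment, which follows from Lemma~\ref{lem:negatives}(a) applied to $f(x)+f(\epsilon x)\in N_{G'}$.
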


The tract associated to the Krasner hyperfield ${\mathbb K}$ (which by abuse of terminology we also denote by ${\mathbb K}$) is a final object
in the category of tracts.  
Indeed, recall that the tract $(H,N_H)$ associated to ${\mathbb K}$ has $H = \{ 1 \}$, $\NN[H] = \NN$, and $N_H = \NN \backslash \{ 1 \}$. 
Thus if $F=(G,N_G)$ is a tract, there is a unique homomorphism $\psi : F \to {\mathbb K}$ sending $0$ to $0$ and every element of $G=F^\times$ to $1$.
$0$ to $0$ and all non-zero elements of $F$ to $1$.  

\medskip

If $M$ is an $F$-matroid, the push-forward $\psi_*(M)$ coincides with the underlying matroid $\underline{M}$.

\medskip

Given a Grassmann-Pl{\"u}cker function (resp. weak Grassmann-Pl{\"u}cker function) $\varphi : E^r \to F$ and a homomorphism of tracts $f : F \to F'$, we define the {\bf push-forward} $f_* \varphi : E^r \to F'$ by the formula
\[
(f_* \varphi)(e_1,\ldots,e_r) = f(\varphi(e_1,\ldots,e_r)).
\]
This is easily checked to once again be a Grassmann-Pl{\"u}cker function (resp. weak Grassmann-Pl{\"u}cker function).

\medskip

As an immediate consequence of (\ref{eq:CircuitsFromGP}), we see that the push-forward of an $F$-matroid can be defined using either circuits or Grassmann-Pl{\"u}cker functions:

\begin{lemma} 
\label{lem:compatibility_of_pushforwards}
If $M_{\varphi}$ is the strong (resp. weak) $F$-matroid associated to the Grassmann-Pl{\"u}cker function (resp. weak Grassmann-Pl{\"u}cker function) 
$\varphi : E^r \to F$, and $f : F \to F'$ is a homomorphism of tracts, then $f_*(M_{\varphi}) = M_{f_* \varphi}$.
\end{lemma}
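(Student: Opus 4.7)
The plan is to verify that $f_{\ast}(M_{\varphi})$ and $M_{f_{\ast}\varphi}$ have the same set of $F'$-circuits by comparing them circuit-by-circuit via the explicit formula (\ref{eq:CircuitsFromGP}).

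First I would check that the two matroids have the same underlying classical matroid. Since $f$ is a tract homomorphism, it restricts to a group homomorphism $F^{\times}\to (F')^{\times}$, so it sends no nonzero element of $F$ to $0$. Hence the support of $f_{\ast}\varphi$ equals the support of $\varphi$, which is the set of bases of $\underline{M_{\varphi}}$; in particular $\underline{M_{f_{\ast}\varphi}}=\underline{M_{\varphi}}$. Likewise, applying $f$ pointwise to an $F$-circuit of $M_{\varphi}$ preserves its support, so by Lemma \ref{lem:pushforward} the underlying matroid of $f_{\ast}(M_{\varphi})$ is also $\underline{M_{\varphi}}$. I would also note that $f(-1)=-1$ in $F'$: indeed $1+(-1)\in N_{G}$ maps under $f$ to $1+f(-1)\in N_{G'}$, and by axiom (T2) applied to $F'$ the unique element with this property is $-1$.

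Next I would fix an arbitrary circuit $C$ of the common underlying matroid and, as in the description following Theorem \ref{thm:A}, pick $x_{0}\in C$ together with a basis $\{x_{1},\ldots,x_{r}\}$ of $\underline{M_{\varphi}}$ containing $C\setminus\{x_{0}\}$. Let $X$ be the projective $F$-circuit of $M_{\varphi}$ with support $C$, characterized by
\[
\frac{X(x_{i})}{X(x_{0})}=(-1)^{i}\,\frac{\varphi(x_{0},\ldots,\hat{x}_{i},\ldots,x_{r})}{\varphi(x_{1},\ldots,x_{r})}.
\]
Applying $f$ to both sides, and using that $f$ is a group homomorphism on nonzero elements and satisfies $f(-1)=-1$, I obtain
\[
\frac{f(X(x_{i}))}{f(X(x_{0}))}=(-1)^{i}\,\frac{(f_{\ast}\varphi)(x_{0},\ldots,\hat{x}_{i},\ldots,x_{r})}{(f_{\ast}\varphi)(x_{1},\ldots,x_{r})}.
\]
This is precisely the defining formula (\ref{eq:CircuitsFromGP}) for the projective $F'$-circuit of $M_{f_{\ast}\varphi}$ with support $C$. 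Hence $f_{\ast}X$ represents that projective circuit.

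Finally, since the set of $F'$-circuits of $f_{\ast}(M_{\varphi})$ is, by construction, the $(F')^{\times}$-orbit closure of the vectors $f_{\ast}X$ as $X$ runs through the $F$-circuits of $M_{\varphi}$, and since both $f_{\ast}(M_{\varphi})$ and $M_{f_{\ast}\varphi}$ have exactly one projective $F'$-circuit per circuit of $\underline{M_{\varphi}}$ (axioms (C0)--(C2)), the equality of projective circuits established above promotes to an equality of full circuit sets. Thus $f_{\ast}(M_{\varphi})=M_{f_{\ast}\varphi}$. I do not anticipate a genuine obstacle here; the content is essentially the observation that the ratio formula (\ref{eq:CircuitsFromGP}) is manifestly functorial under multiplicative maps, combined with the care needed to ensure $f$ behaves correctly on the sign $(-1)^{i}$.
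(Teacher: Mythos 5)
Your proposal is correct and follows exactly the route the paper has in mind: the paper treats the lemma as an immediate consequence of the circuit formula \eqref{eq:CircuitsFromGP}, and your argument simply spells out that applying $f$ to that formula (using $f(-1)=-1$ and that $f$ preserves nonvanishing, hence supports and underlying matroids) identifies the projective circuits of $f_*(M_\varphi)$ and $M_{f_*\varphi}$ support by support, which by (C0)--(C2) gives equality of the full circuit sets.
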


It is also straightforward to check (using either circuits or Grassmann-Pl{\"u}cker functions) that if $M$ is a strong (resp. weak) $F$-matroid and $f : F \to F'$ is a homomorphism of tracts,
then the dual strong (resp. weak) $F'$-matroid to $f_*(M)$ is $f_*(M^*)$.  Summarizing our observations in this section, we have:

\begin{corollary}
If $M$ is a strong (resp. weak) $F$-matroid with $F$-circuit set $\cC(M)$ and Grassmann-Pl{\"u}cker function (resp. weak Grassmann-Pl{\"u}cker function) $\varphi$, and $f : F \to F'$ is a homomorphism of tracts
, the following coincide:
\begin{enumerate}
\item The strong (resp. weak) $F'$-matroid whose $F'$-circuits are $\{ c' f_*(X) \; : \; c' \in (F')^\times, \; X \in \cC(M) \}$.
\item The strong (resp. weak) $F'$-matroid whose $F'$-cocircuits are $\{ c' f_*(Y) \; : \; c' \in (F')^\times, \; Y \in \cC(M^*) \}$.
\item The strong (resp. weak) $F'$-matroid whose Grassmann-Pl{\"u}cker function (resp. weak Grassmann-Pl{\"u}cker function) is $f_* \varphi$.
\end{enumerate}
\end{corollary}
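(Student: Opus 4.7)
The plan is to assemble this corollary from results already established in the paper, with the only real content being that push-forward commutes with dualization.

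First, the identification of (1) with (3) is exactly the content of Lemma~\ref{lem:compatibility_of_pushforwards}: starting from the $F$-matroid $M_\varphi$ whose $F$-circuits are produced from $\varphi$ via (\ref{eq:CircuitsFromGP}), the push-forward $f_*(M_\varphi)$ (whose $F'$-circuits are described as in (1)) coincides with $M_{f_*\varphi}$ (the $F'$-matroid defined via (3)).

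For the identification of (1) with (2), the key step is to verify that push-forward commutes with duality, i.e., $(f_*M)^*=f_*(M^*)$, which is the observation stated just before the corollary. I would check this via the Grassmann--Pl{\"u}cker description of duality from Theorem~\ref{thm:B}: if $\varphi$ is a GP function for $M$, then
\[
\varphi^*(x_1,\ldots,x_{m-r}) = \mathrm{sign}(x_1,\ldots,x_{m-r},x_1',\ldots,x_r')\cdot\involution{\varphi(x_1',\ldots,x_r')}.
\]
Because any tract homomorphism is multiplicative and, by the uniqueness in (T2), sends the negation element of $F$ to that of $F'$, it intertwines the $\mathrm{sign}$ factor; assuming $f$ is also compatible with the chosen involutions (automatic if e.g.\ either involution is the identity), we obtain $f_*(\varphi^*)=(f_*\varphi)^*$. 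Applying Lemma~\ref{lem:compatibility_of_pushforwards} to both $(M,\varphi)$ and $(M^*,\varphi^*)$ and using the bijection of Theorem~\ref{thm:A} gives
\[
f_*(M^*) \;=\; M_{f_*(\varphi^*)} \;=\; M_{(f_*\varphi)^*} \;=\; (M_{f_*\varphi})^* \;=\; (f_*M)^*,
\]
as required.

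Finally, to close (1) = (2): the $F'$-cocircuits of $f_*M$ are by definition the $F'$-circuits of $(f_*M)^*$, which by the previous step are the $F'$-circuits of $f_*(M^*)$; applying Lemma~\ref{lem:pushforward} to $M^*$, this set is precisely $\{c'f_*(Y):c'\in (F')^\times,\ Y\in\cC(M^*)\}$. The main (mild) obstacle is the compatibility $f_*(\varphi^*)=(f_*\varphi)^*$, which requires that $f$ respects the involutions used to define the inner product; this is essentially a bookkeeping calculation on the scalar formula for $\varphi^*$ and is the only nontrivial verification needed.
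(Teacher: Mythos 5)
Your proposal is correct and takes essentially the same route as the paper, which presents this corollary as a summary of the preceding observations of that section: Lemma~\ref{lem:pushforward}, Lemma~\ref{lem:compatibility_of_pushforwards}, and the compatibility $f_*(M^*)=(f_*M)^*$ (which the paper asserts is straightforward to check using either circuits or Grassmann--Pl{\"u}cker functions, and which you spell out via the Grassmann--Pl{\"u}cker formula, correctly using that a tract homomorphism sends $\epsilon$ to $\epsilon$ so the sign factor is preserved). Your only caveat, compatibility of $f$ with the involutions, is covered by the paper's standing convention that unspecified involutions are the identity; note only that the check is automatic when the involution on $F$ is trivial, whereas triviality of the involution on $F'$ alone would not suffice.
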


\begin{defn} \label{defn:realizable}
Let $f : F \to F'$ be a homomorphism of tracts, and let $M'$ be a strong (resp.~weak) matroid on $E$ with coefficients in $F'$.  We say that $M'$ is {\bf realizable with respect to $f$} if there is a strong (resp.~weak) matroid $M$ over $F$ such that $f_*(M)=M'.$

If $F'={\mathbb K}$ is the Krasner hyperfield, so that $M'$ is a matroid in the usual sense, we say that $M'$ is {\bf strongly realizable over $F$} (resp.~{\bf weakly realizable over $F$}) if there is a strong (resp.~weak) matroid $M$ over $F$ such that $\psi_*(M)=M'$, where $\psi : F \to {\mathbb K}$ is the canonical homomorphism.
\end{defn}


\subsection{Perfect tracts and doubly distributive partial hyperfields}
\label{sec:perfect}

Although the notions of weak and strong matroids over tracts do not coincide in general, they do agree for a special class which we call {\em perfect tracts}.
As a key example, the tracts associated to {\em doubly distributive partial hyperfields} are perfect; this follows from some results of Dress and Wenzel in \cite{DressWenzelPM}, as we will explain in this section.

We say that a partial hyperfield $P$ is {\bf doubly distributive} if for any $x$, $y$, $z$ and $t$ in $P$ we have $(x \boxplus y) (z \boxplus t) = xz \boxplus xt \boxplus yz \boxplus yt$. It follows that $$\left(\bigboxplus_{i \in I} x_i  \right)\left(\bigboxplus_{j \in J} y_j \right) = \bigboxplus_{\substack{i \in I \\ j \in J}} x_i y_j$$ for any finite families $(x_i)_{i \in I}$ and $(y_j)_{j \in J}$. Not all (partial) hyperfields have this property. For example, the triangle and phase hyperfields are not doubly distributive, whereas the Krasner, sign and tropical hyperfields are.

We can build a fuzzy ring in the sense of Dress--Wenzel from a doubly distributive partial hyperfield $P=(G,R)$ by setting $K = \NN[G]$, $\epsilon = -1$, and $K_0 = \{ \sum g_i \in K \; : \; 0 \in \boxplus g_i \}$.  (Double distributivity is needed to verify axiom (FR7) from \S\ref{sec:fuzzyringtract}.)

This fuzzy ring is in fact a fuzzy integral domain, in the sense that for $x,y \in K$ with $x \cdot y \in K_0$ we have either $x \in K_0$ or 
$y \in K_0$, and is distributive in the sense that $x \cdot (y_1 + y_2) = x \cdot y_1 + x \cdot y_2$.

Furthermore, if we have a strong matroid $M$ over $F$ on a set $E$ with $F$-circuit set $\mathcal C$, then $\mathcal C$ presents a matroid with coefficients in $K$ in the sense of \cite{DressWenzelPM}. This is not completely obvious: in order to prove it, we must analyze a key operation from that paper. Let $r, s \in K^E$ and let $f \in E$. Then we define $r \wedge_f s \in K^E$ by $$(r \wedge_f s)(e) := \begin{cases}0 & \text{if } e = f \\ s(f) \cdot r(e) + (-1) \cdot r(f) \cdot s(e) & \text{if } e \neq f.\end{cases}$$
For $r, s \in K^E$ we write $r \bot s$ to mean $\sum_{e \in E} r(e) \cdot s(e) \in K_0$. Let $\mathcal C^*$ be the set of $F$-cocircuits of $M$. We say that $r \in K^E$ is a {\bf fuzzy vector} of $M$ if $r \bot Y$ for any $Y \in \mathcal C^*$. It follows from Lemma 2.4(i) of \cite{DressWenzelPM} that if $r$ and $s$ are fuzzy vectors then so is $r \wedge_f s$ for any $f \in E$. Since all elements of $\mathcal C$ are fuzzy vectors, the following lemma suffices to establish that $\mathcal C$ presents a matroid with coefficients in $K$:\footnote{We omit the definition, since it is a bit technical.}

\begin{lemma} \label{lem:fuzzypresents}
Let $r$ be a fuzzy vector of $M$ and choose $e \in E$ with $r(e) \not \in K_0$. Then there is some $X \in \mathcal C$ with $e \in \underline X \subseteq \underline r$.
\end{lemma}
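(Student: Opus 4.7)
My plan is to prove the contrapositive: suppose no $X \in \mathcal{C}$ satisfies $e \in \underline{X} \subseteq \underline{r}$; I will deduce that $r(e) \in K_0$, which contradicts the hypothesis.

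First I would translate the non-existence of such an $X$ into a statement about the underlying matroid $\underline{M}$. Since the supports of projective $F$-circuits of $M$ are precisely the circuits of $\underline{M}$, the assumption means that no circuit of $\underline{M}$ contains $e$ and lies inside $\underline{r}$. Equivalently, $e \notin \mathrm{cl}_{\underline{M}}(\underline{r} \setminus \{e\})$. By the standard matroid-theoretic duality between closure and cocircuits (namely, $e \notin \mathrm{cl}(S)$ iff some cocircuit contains $e$ and is disjoint from $S$), there exists a cocircuit $D$ of $\underline{M}$ with $e \in D$ and $D \cap \underline{r} = \{e\}$. Lifting this to an $F$-cocircuit, choose $Y \in \mathcal{C}^*$ with $\underline{Y} = D$.

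Next I would compute the inner product $r \cdot Y = \sum_{f \in E} r(f) \cdot Y(f)$ inside $K = \NN[G]$. For any $f \neq e$, either $f \notin D = \underline{Y}$ (whence $Y(f) = 0$) or $f \in D \cap \underline{r} = \{e\}$ (impossible), so by (FR1) the corresponding summand vanishes. Thus $r \cdot Y = r(e) \cdot Y(e)$. The fuzzy-vector hypothesis $r \perp Y$ gives $r(e) \cdot Y(e) \in K_0$.

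Finally, since $Y(e) \in G = F^\times$ is a unit of $K$, and $K_0$ is a semiring ideal (axiom (FR4), so in particular closed under multiplication by arbitrary elements of $K$), we conclude
\[
r(e) \;=\; Y(e)^{-1} \cdot \bigl(r(e) \cdot Y(e)\bigr) \;\in\; K_0,
\]
contradicting $r(e) \notin K_0$. The main subtleties are (i) invoking the correct matroid-theoretic lemma relating closure to cocircuits in $\underline{M}$ (which is where the orthogonality with cocircuits gets leveraged, via lifting $D$ to $Y$), and (ii) carefully noting that $Y(e)$ is a genuine unit of $K$ so that the ideal property of $K_0$ lets us divide out. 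Neither step requires doubly distributivity or any of the special structure used elsewhere in the section; it is purely a consequence of the definitions of fuzzy vector and $F$-cocircuit together with the cryptomorphism between projective $F$-cocircuits and cocircuits of $\underline{M}$.
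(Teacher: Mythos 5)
Your proof is correct and is essentially the paper's argument run contrapositively: the paper notes that orthogonality with $F$-cocircuits forbids any cocircuit meeting $\underline{r}$ exactly in $\{e\}$ (the same $r\cdot Y = r(e)Y(e)\in K_0$ computation with $Y(e)$ a unit), hence $e$ is not a coloop of $\underline{M}|\underline{r}$ and lies in a circuit contained in $\underline{r}$, whereas you produce the offending cocircuit via the closure characterization --- two phrasings of the same circuit/cocircuit duality fact. One cosmetic slip: your case split ``either $f\notin D$ or $f\in D\cap\underline{r}=\{e\}$'' omits the case $f\in D\setminus\underline{r}$, where the summand vanishes because $r(f)=0$; the conclusion is unaffected.
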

\begin{proof}
For any $Y \in \mathcal C^*$ we have $\underline r \cap \underline Y \neq \{e\}$ since $r \bot Y$, so there is no cocircuit of $\underline M$ which meets $\underline r$ only in $\{e\}$. Thus $e$ is not a coloop of $\underline M | \underline r$, and so there is some circuit $C$ of $\underline M | \underline r$ containing $e$. It suffices to take $X$ to be any element of $\mathcal C$ with $\underline X = C$.
\end{proof}

Furthermore, $\mathcal C^*$ presents the dual matroid with coefficients to the one presented by $\mathcal C$.

Now we can apply Theorem 2.7 of \cite{DressWenzelPM} to obtain:

\begin{theorem}
Let $M$ be a strong matroid over a doubly distributive partial hyperfield $P$, let $r$ be a fuzzy vector of $M$ and let $s$ be a fuzzy vector of $M^*$. Then $r \bot s$.
\end{theorem}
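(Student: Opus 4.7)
The plan is to reduce the assertion to a direct invocation of Theorem 2.7 of \cite{DressWenzelPM}, exploiting the bridge between strong matroids over doubly distributive partial hyperfields and matroids with coefficients in fuzzy rings that has been built in the preceding paragraphs.

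First I would record the data: the doubly distributive partial hyperfield $P=(G,R)$ gives rise to the fuzzy ring $K=\NN[G]$ with $\epsilon=-1$ and $K_0=\{\sum g_i\in K : 0\in\bigboxplus g_i\}$, and as noted just before the statement, double distributivity is exactly what is needed to verify (FR7). Next, I would assemble the structural ingredients that turn the pair $(\Ccal,\Ccal^*)$ into the kind of object to which the Dress--Wenzel theory applies. Namely, Lemma~\ref{lem:fuzzypresents} shows that $\Ccal$ presents a matroid with coefficients in $K$, and the dual version of the same lemma (applied to $M^*$, whose $F$-circuits are $\Ccal^*$ by Theorem~\ref{thm:B}) shows that $\Ccal^*$ presents the dual matroid with coefficients; the fact that the underlying matroids of $M$ and $M^*$ are dual is again Theorem~\ref{thm:B}. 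Thus we are precisely in the setup of \cite[\S 2]{DressWenzelPM}, with $M$ and $M^*$ corresponding to a dual pair of matroids with coefficients in the fuzzy integral domain $K$.

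With this dictionary in hand, Theorem 2.7 of \cite{DressWenzelPM} asserts exactly that every fuzzy vector of the primal matroid is $\bot$-orthogonal to every fuzzy vector of its dual. Unwinding the notation: a fuzzy vector of $M$ in our sense is by definition an element $r\in K^E$ with $r\bot Y$ for every $Y\in\Ccal^*$, which is precisely the notion of vector of the presented matroid with coefficients; similarly on the dual side. Therefore Theorem 2.7 applies to the pair $(r,s)$ and yields $\sum_{e\in E}r(e)\cdot s(e)\in K_0$, which is the statement $r\bot s$.

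The main obstacle, and essentially the only non-formal ingredient, is verifying that our notions of \emph{presenting a matroid with coefficients} and of \emph{fuzzy vector} match the ones used in \cite{DressWenzelPM} closely enough that Theorem 2.7 applies off the shelf. This matching is where Lemma 2.4(i) of \cite{DressWenzelPM} (closure of fuzzy vectors under the operation $\wedge_f$) is used crucially, together with Lemma~\ref{lem:fuzzypresents}: the former guarantees that the span of $\Ccal$ under iterated $\wedge_f$-combinations stays within the fuzzy vectors, and the latter guarantees that every ``elementary'' fuzzy vector has a circuit of $M$ inside its support. Once this translation is checked carefully, the rest of the argument is a citation.
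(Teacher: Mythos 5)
Your proposal is correct and follows essentially the same route as the paper: the paper likewise deduces the theorem by verifying that $\mathcal C$ and $\mathcal C^*$ present a dual pair of matroids with coefficients in the fuzzy ring $K=\NN[G]$ (using the operation $\wedge_f$, Lemma 2.4(i) of \cite{DressWenzelPM}, and Lemma~\ref{lem:fuzzypresents}) and then citing Theorem 2.7 of \cite{DressWenzelPM}.
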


This has an important consequence which can be expressed without reference to the fuzzy ring $K$. Given a strong matroid $M$ over a tract $F$ on a set $E$ with $F$-circuit set $\mathcal C$ and $F$-cocircuit set $\mathcal C^*$, a {\bf vector} of $M$ is an element of $F^E$ which is orthogonal to everything in $\Ccal^*$. Similarly a {\bf covector} of $M$ is an element of $F^E$ which is orthogonal to everything in $\Ccal$. We say that $F$ is {\bf perfect} if, for any strong matroid $M$ over $F$, all vectors are orthogonal to all covectors.

\begin{cor}
Any doubly distributive partial hyperfield is perfect.
\end{cor}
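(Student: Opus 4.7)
The plan is to deduce the corollary directly from the preceding Theorem by a translation of vocabulary. Set $F = P$ and let $K = \mathbb{N}[G]$ with $K_0 = \{\sum g_i \in K : 0 \in \boxplus g_i\}$ be the fuzzy ring attached to $P$ in the previous paragraphs. Since $F \subseteq K$ as sets, every element of $F^E$ is automatically an element of $K^E$, and on this common domain the two candidate notions of orthogonality agree: for $X, Y \in F^E$, the tract orthogonality $X \perp Y$ (i.e.\ $\sum_e X(e) Y(e) \in N_G$) and the fuzzy-ring orthogonality $X \bot Y$ (i.e.\ $\sum_e X(e) Y(e) \in K_0$) are literally the same condition, by the definition of $K_0$.

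With this dictionary in hand, let $r$ be a vector of $M$ and $s$ a covector of $M$ in the tract sense. Then $r \perp Y$ for every $Y \in \mathcal{C}^*$, so $r$ is a fuzzy vector of $M$ in the sense introduced just before Lemma~\ref{lem:fuzzypresents}. Similarly, $s$ is orthogonal to every $X \in \mathcal{C}$; but $\mathcal{C}$ is the set of $P$-cocircuits of $M^*$ (since $M^{**} = M$ by duality), so $s$ is a fuzzy vector of $M^*$. The preceding Theorem, applied to the matroid with coefficients presented by $\mathcal{C}$ and to its dual presented by $\mathcal{C}^*$, then yields $r \bot s$ in $K$, which, by the identification above, is exactly $r \perp s$ in $P$. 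Hence $P$ is perfect.

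The heavy lifting---showing that $\mathcal{C}$ and $\mathcal{C}^*$ present mutually dual matroids with coefficients in $K$ (which is where double distributivity enters, through axiom (FR7)), and then invoking Theorem~2.7 of \cite{DressWenzelPM}---has already been carried out in the build-up to the Theorem, so nothing further is required here. I expect the only real obstacle to guard against is a subtle mismatch of conventions, chiefly ensuring that fuzzy vectors of the Dress--Wenzel matroid with coefficients, when restricted to $F^E \subseteq K^E$, coincide with vectors of $M$ over $P$; this is precisely what the identification of the two orthogonality relations guarantees, so the deduction goes through without further difficulty.
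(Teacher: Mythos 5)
Your proposal is correct and follows essentially the same route as the paper: the corollary is an immediate consequence of the preceding theorem, once one observes that tract orthogonality in $F^E$ coincides with the fuzzy-ring relation $\bot$ (since $N_G$ and $K_0$ are defined by the same condition in $\NN[G]$), so that vectors of $M$ are fuzzy vectors of $M$ and covectors of $M$ are fuzzy vectors of $M^*$ (using $M^{**}=M$). The paper leaves exactly this dictionary implicit, and your write-up supplies it faithfully.
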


Note, however, that there are perfect hyperfields which are not doubly distributive. For example, it is not hard to check that any weak hyperfield $W(G,\epsilon)$ is perfect, since any hypersum with more than 3 nonzero summands in a weak hyperfield contains 0.


We will show the following in \S\ref{sec:perfectproof}:\footnote{For fuzzy rings, this also follows from Theorem 3.4 of \cite{DressWenzelPM}, but our argument is different.}

\begin{theorem}\label{thm:perfect}
Any weak matroid $M$ over a perfect tract $F$ is strong.
\end{theorem}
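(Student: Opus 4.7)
My plan is to verify the cryptomorphic condition ${\rm (C3)}''$ from Theorem~\ref{thm:C3primeprime}, which by that theorem is equivalent to $M$ being strong. First I would carry out the following reduction to an orthogonality statement. Fix a basis $B$ of $\underline M$, an $F$-circuit $X \in \Ccal$, and $f \in E$; the goal is to show $X(f) - \sum_{e \in E \setminus B} X(e) X_{B,e}(f) \in N_G$. For $f \in E \setminus B$ this is immediate since $X_{B,e}(f) = \delta_{ef}$. For $f \in B$, let $Y := Y_{B,f}$ be the fundamental $F$-cocircuit normalized so that $Y(f) = 1$. Since $\underline{X_{B,e}} \subseteq B \cup \{e\}$ and $\underline{Y} \subseteq \{f\} \cup (E \setminus B)$, we have $\underline{X_{B,e}} \cap \underline Y \subseteq \{e,f\}$, so the weak dual pair axiom ${\rm (DP3)}'$ of Theorem~\ref{thm:Cprime} gives $X_{B,e} \cdot Y \in N_G$. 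Expanding this inner product has at most two nonzero terms, and Lemma~\ref{lem:negatives} yields the classical duality identity $Y(e) = -X_{B,e}(f)$. Substituting this back, the expression $X(f) - \sum_{e} X(e) X_{B,e}(f)$ becomes exactly the inner product $X \cdot Y$. Thus ${\rm (C3)}''$ reduces to the orthogonality $X \perp Y$ for every $F$-circuit $X$ and every fundamental $F$-cocircuit $Y$ of $M$.

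Next I would establish this orthogonality by induction on $|E|$. The base case, in which $|E|$ is too small to support modular families of the relevant sizes, is immediate because (C3) then coincides with ${\rm (C3)}'$ or is vacuous. For the inductive step, assume the theorem for all weak $F$-matroids on ground sets of size less than $|E|$. Given $X \in \Ccal$ and $Y \in \Ccal^*$, by standard matroid-minor manipulations together with Theorem~\ref{thm:D}, one can find a partition $A_0 \sqcup A_1 = E \setminus (\underline X \cup \underline Y)$ so that in the minor $M' := M / A_0 \setminus A_1$ the element $X$ remains an $F$-circuit and $Y$ an $F$-cocircuit. If $\underline X \cup \underline Y \subsetneq E$ then the ground set of $M'$ is strictly smaller than $E$, so by induction $M'$ is a strong $F$-matroid; applying perfectness of $F$ to $M'$ (noting that the circuit $X$ is in particular a vector and the cocircuit $Y$ a covector of $M'$) gives $X \perp Y$ in $M'$, which forces $X \perp Y$ in $M$ since the inner product is supported in the ground set of $M'$.

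The hard part will be the boundary case $\underline X \cup \underline Y = E$, where no proper minor of $M$ contains both $X$ as a circuit and $Y$ as a cocircuit. Here the inductive reduction breaks down and one must establish the orthogonality by a different route. A natural strategy is to construct an auxiliary strong $F$-matroid $N$, possibly on a slightly enlarged ground set or built directly from the fundamental family $\{X_{B,e} : e \in E \setminus B\}$ together with $Y$, such that $X$ appears as a vector of $N$ and $Y$ as a covector of $N$; perfectness of $F$ applied to $N$ would then yield the required relation $X \cdot Y \in N_G$. Designing this construction correctly, and checking that it really does satisfy the cryptomorphic axioms of a strong $F$-matroid, is where I expect the bulk of the technical work to lie. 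Once that boundary case is dispatched, the reduction in the first paragraph closes the proof.
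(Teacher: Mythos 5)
Your first paragraph is essentially sound: the identity $Y_{B,f}(e) = -X_{B,e}(f)$ does follow from ${\rm (DP3)}'$ and Lemma~\ref{lem:negatives}, and since every cocircuit of $\underline{M}$ is the fundamental cocircuit of some element with respect to some basis, your reduction amounts to proving full orthogonality between ${\mathcal C}$ and ${\mathcal C}^*$, i.e.\ (DP3) — which is also the paper's target (it then concludes strongness via Theorem~\ref{thm:C} rather than via ${\rm (C3)}''$). The problems come afterwards. The claimed reduction when $\underline X \cup \underline Y \subsetneq E$ is not available: there need not exist a partition $A_0 \sqcup A_1$ of $E \setminus (\underline X \cup \underline Y)$ for which $X$ remains a circuit and $Y$ a cocircuit of $M/A_0 \setminus A_1$. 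Concretely, take $\underline M = U_{3,7}$, $\underline X = \{1,2,3,4\}$ (a circuit) and $\underline Y = \{1,2,3,4,5\}$ (a cocircuit), so $E \setminus (\underline X \cup \underline Y) = \{6,7\}$ and $|\underline X \cap \underline Y| = 4$: contracting any nonempty $A_0$ drops the rank to at most $2$, so the $4$-element set $\underline X$ cannot stay a circuit, while if $A_0 = \emptyset$ then both $6$ and $7$ are deleted and in $U_{3,5}$ the cocircuits are $3$-element sets, so $\underline Y$ does not stay a cocircuit. Nor can you rescue the step by only asking that $X$ and $Y$ restrict to a vector and a covector of some minor on $\underline X \cup \underline Y$: checking that $X$ restricts to a vector requires orthogonality of $X$ against the lifts to $M$ of the cocircuits of that minor, which is exactly the statement under proof, and an induction on $|E|$ gives no purchase on those lifts.

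The second gap is that the case $\underline X \cup \underline Y = E$ — which the example above shows is in effect the general case — is left entirely unproven; the ``auxiliary strong matroid'' is never constructed, and this is where all the content of the theorem lies. The paper's proof supplies the two ideas you are missing. It inducts on $k = |\underline X \cap \underline Y|$ (base case $k = 3$ being ${\rm (DP3)}'$), not on $|E|$. It first reduces by minors (Theorem~\ref{thm:D}) to the situation $\underline X = \underline Y$, spanning and cospanning, and then passes to a minor $N$ on ground set exactly $\underline X$ obtained by contracting at least one and deleting at least one of the remaining elements, so that $\underline X$ is neither a circuit nor a cocircuit of $\underline N$. Consequently every circuit--cocircuit intersection in $N$ has size at most $k-1$, so the induction hypothesis makes $N$ a strong $F$-matroid; and the same hypothesis, applied to $M$ itself, shows that $X\restric_{\underline X}$ is a vector and $Y\restric_{\underline X}$ a covector of $N$, because the relevant lifted cocircuits (resp.\ circuits) meet $\underline X$ (resp.\ $\underline Y$) in at most $k-1$ elements. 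Perfection of $F$ applied to $N$ then gives $X \perp Y$. In short: in the hard case one should not try to keep $X$ and $Y$ as a circuit and a cocircuit of the minor, but only as a vector and a covector of a strong minor, and the induction must run on the intersection size so that this vector/covector property is actually available.
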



\section{Proofs}
\label{sec:proofsection}

In this section, we provide proofs of the main theorems of the paper.
We closely follow the arguments of Anderson--Delucchi from
\cite{AndersonDelucchi}; when the proof is a straightforward modification of a corresponding result in {\em loc.~cit.}, we sometimes omit details.

In order to simplify the notation, we assume throughout this section that the involution $\tau: x \mapsto \overline{x}$ is trivial.\footnote{{ This is in fact a harmless assumption, since one can deduce the general case of the theorems in \S\ref{sec:Duality} and \ref{sec:minors} from this special one.
To see this, first suppose we have proved Theorem~\ref{thm:B} in the special case $\tau = {\rm id}$.  Then Theorem~\ref{thm:B} for $(M,\tau)$ follows from the special case $(\overline{M},{\rm id})$, where $\overline{M}$ is the matroid whose $F$-circuits are obtained by replacing each $F$-circuit $C$ of $M$ with its image $\overline{C}$ under $\tau$.  The other theorems in \S\ref{sec:Duality} and \ref{sec:minors} follow similarly.}}

\subsection{Weak Grassmann-Pl{\"u}cker functions and Duality}

Given a weak Grassmann-Pl{\"u}cker function $\varphi : E^r \to F$ of rank $r$ on the ground set $E$, we set 
\[
{\mathbf B}_\varphi := \{ \{ b_1,\ldots,b_r \} \; | \; \varphi(b_1,\ldots,b_r) \neq 0 \}.
\]
Recall that this is the set of bases of a matroid of rank $r$, which we denote by $\underline{M}_\varphi$ (rather than the typographically more awkward $\underline{M_\varphi}$) and call the {\bf underlying matroid} of $\varphi$.

In what follows, we fix a total order on $E$. Let $|E| = m$.

\begin{defn}
Let $\varphi$ be a rank $r$ weak Grassmann-Pl{\"u}cker function on $E$, and for every ordered tuple $(x_1,x_2,\ldots,x_{m-r}) \in E^{m-r}$ let 
$x_1',\ldots,x_r'$ be an ordering of $E \backslash \{ x_1,x_2,\ldots,x_{m-r} \}$.  Define the {\bf dual weak Grassmann-Pl{\"u}cker function} $\varphi^* : E^{m-r} \to F$ by 
\[
\varphi^*(x_1,\ldots,x_{m-r}) := {\rm sign}(x_1,\ldots,x_{m-r},x_1',\ldots,x_r') \noinvolution{\varphi(x_1',\ldots,x_r')}. 
\]
\end{defn}

Note that, up to a global change in sign, $\varphi^*$ is independent of the choice of ordering of $E$.

\begin{lemma} \label{lem:Lemma3.2}
$\varphi^*$ is a rank $(m-r)$ weak Grassmann-Pl{\"u}cker function over $F$, and the underlying matroid $\underline{M}_{\varphi^*}$ is the matroid dual of $\underline{M}_\varphi$. If $\varphi$ is a Grassmann-Pl\"ucker function then so is $\varphi^*$.
\end{lemma}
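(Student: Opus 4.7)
The plan is to verify the three Grassmann--Pl\"ucker axioms for $\varphi^*$ in turn, together with the statement about underlying matroids, leaving the cryptomorphic axiom (GP3)$'$ (and (GP3) in the strong case) for last as the main technical obstacle.

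First I would dispense with the easy axioms. Axiom (GP1) holds because $\varphi \not\equiv 0$ gives some tuple $(x_1',\ldots,x_r')$ of distinct elements in the support of $\varphi$, and on the complementary tuple $(x_1,\ldots,x_{m-r})$ we have $\varphi^*(x_1,\ldots,x_{m-r})=\pm\varphi(x_1',\ldots,x_r')\neq 0$. Axiom (GP2) follows from two standard facts: swapping two entries of $(x_1,\ldots,x_{m-r})$ inverts the sign of the permutation $(x_1,\ldots,x_{m-r},x_1',\ldots,x_r')$, leaving $\varphi(x_1',\ldots,x_r')$ unchanged; and if two entries coincide, the complementary tuple has a repetition (after permutation), so $\varphi$ vanishes on it. The independence (up to global sign) from the choice of ordering of $E\setminus\{x_1,\ldots,x_{m-r}\}$ is also immediate.

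Next, I would identify the underlying matroid. By definition, $\varphi^*(x_1,\ldots,x_{m-r})\neq 0$ iff the complementary set $\{x_1',\ldots,x_r'\}$ lies in the support of $\varphi$, i.e.\ iff it is a basis of $\underline{M}_\varphi$. Hence the support of $\varphi^*$ consists precisely of the complements of bases of $\underline{M}_\varphi$, which by standard matroid theory is the set of bases of $\underline{M}_\varphi^*$. In particular, $\mathbf{B}_{\varphi^*}$ is the set of bases of a matroid of rank $m-r$, namely $\underline{M}_\varphi^*$.

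The main obstacle is (GP3)$'$ for $\varphi^*$, together with (GP3) in the strong case. Fix tuples $I^*=(x_1,\ldots,x_{m-r+1})$ and $J^*=(y_1,\ldots,y_{m-r-1})$ and consider
\[
\Sigma := \sum_{k=1}^{m-r+1}(-1)^k\,\varphi^*(x_1,\ldots,\hat{x}_k,\ldots,x_{m-r+1})\cdot\varphi^*(x_k,y_1,\ldots,y_{m-r-1}).
\]
Set $I := E\setminus\{y_1,\ldots,y_{m-r-1}\}$ and $J := E\setminus\{x_1,\ldots,x_{m-r+1}\}$, so $|I|=r+1$, $|J|=r-1$, and (for the weak case) $|I\setminus J|=|I^*\setminus J^*|=3$. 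For each $k$, when $\varphi^*(x_1,\ldots,\hat x_k,\ldots,x_{m-r+1})\neq 0$, unfolding the definition rewrites it as $\varepsilon_k\,\varphi(\text{ordering of }J\cup\{x_k\})$, and similarly $\varphi^*(x_k,y_1,\ldots,y_{m-r-1})$ rewrites as $\eta_k\,\varphi(\text{ordering of }I\setminus\{x_k\})$, where $\varepsilon_k,\eta_k\in\{\pm 1\}$ are the signs of the corresponding sorting permutations. A direct (but bookkeeping-heavy) computation, identical to the one carried out in \cite{AndersonDelucchi} and the classical matroid-duality arguments, shows that $(-1)^k\varepsilon_k\eta_k$ coincides with the sign prescribed by the (GP3) relation for $\varphi$ on the pair $(I,J)$, with $x_k$ playing the role of the element moved from $I$ into $J$. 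Thus $\Sigma$ equals (up to a global unit) the left-hand side of the Grassmann--Pl\"ucker relation for $\varphi$ applied to $(I,J)$, which lies in $N_G$ by assumption. In the weak case we only need this for $|I\setminus J|=3$, which is exactly what $|I^*\setminus J^*|=3$ yields; in the strong case, the same reduction applies unconditionally, giving (GP3) for $\varphi^*$. The crux of the proof is therefore the sign-matching identity for $\varepsilon_k\eta_k(-1)^k$, which I expect to be the only substantive calculation.
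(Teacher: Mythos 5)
Your proposal is correct and follows essentially the same route as the paper: the paper likewise treats (GP1), (GP2), and the basis/duality statement as immediate and reduces (GP3)/(GP3)$'$ for $\varphi^*$ to the corresponding relation for $\varphi$ via complementation, deferring the sign bookkeeping to the computation in \cite[Proof of Lemma 3.2]{AndersonDelucchi}, exactly as you do. Your write-up is in fact somewhat more explicit than the paper's (e.g.\ noting $|I\setminus J|=|I^*\setminus J^*|$ so the weak case only needs the 3-term relations), but it is the same argument.
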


\begin{proof}
The fact that ${\mathbf B}_{\varphi^*}$ is the set of bases for  $\underline{M}_\varphi^*$ is immediate from the definitions.  To see that $\varphi^*$ is a rank $(m-r)$ weak Grassmann-Pl{\"u}cker function (resp. Grassmann-Pl{\"u}cker function), it suffices to prove (GP3)$'$ (resp. (GP3)) since (GP1) and (GP2) are clear.  This also follows from \cite[Proof of Lemma 3.2]{AndersonDelucchi}.
\end{proof}

\subsection{Weak Grassmann-Pl{\"u}cker functions, Contraction, and Deletion}

Let $\varphi$ be a rank $r$ weak Grassmann-Pl{\"u}cker function on $E$, and let $A \subset E$.  

\begin{defn}
\begin{enumerate}
\item (Contraction)
Let $\ell$ be the rank of $A$ in $\underline M_{\varphi}$, and let $\{a_1,a_2,\ldots,a_\ell \}$ be a maximal $\varphi$-independent subset of $A$.  Define $\varphi / A : (E \backslash A)^{r - \ell} \to F$ by
\[
(\varphi / A)(x_1,\ldots,x_{r - \ell}) := \varphi(x_1,\ldots,x_{r - \ell},a_1,\ldots,a_\ell).
\]
\item (Deletion)
Let $k$ be the rank of $E \backslash A$ in $\underline M_{\varphi}$, and choose $a_1,\ldots,a_{r-k} \subseteq A$ such that $\{ a_1,\ldots,a_{r-k} \}$ is a basis of $\underline M_{\varphi}/(E\setminus A)$.  Define $\varphi \backslash A : (E \backslash A)^{k} \to F$ by
\[
(\varphi \backslash A)(x_1,\ldots,x_k) := \varphi(x_1,\ldots,x_k,a_1,\ldots,a_{r-k}).
\]
\end{enumerate}
\end{defn}

Note that for different choices of the $a_i$ the objects defined here may be scaled by a constant factor, so that strictly speaking these operations are defined only for scaling-equivalence classes of Grassmann-Pl\"ucker functions. 

The proof of the following lemma is the same as the proofs of Lemmas 3.3 and 3.4 of  \cite{AndersonDelucchi}:

\begin{lemma} \label{lem:MinorLemma}
\begin{enumerate}
\item Both $\varphi / A$ and $\varphi \backslash A$ are weak Grassmann-Pl{\"u}cker functions, and they are Grassmann-Pl\"ucker functions if $\varphi$ is. Their definitions are independent of all choices up to global multiplication by a nonzero element of $F$.  
\item $\underline{M}_{\varphi / A} = \underline{M}_{\varphi} / A$ and $\underline{M}_{\varphi \backslash A} = \underline{M}_{\varphi} \backslash A$.
\item $(\varphi \backslash A)^* = \varphi^* / A$.
\end{enumerate}
\end{lemma}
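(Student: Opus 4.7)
The plan is to verify the three claims in order, exploiting alternation (GP2) of $\varphi$ to suppress nuisance terms whenever we lift a Grassmann--Pl{\"u}cker relation from the minor back to $\varphi$.

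For part (1), fix the distinguished auxiliary tuple $\bfa = (a_1,\ldots,a_\ell)$ (resp.\ $(a_1,\ldots,a_{r-k})$) used to define $\varphi/A$ (resp.\ $\varphi\backslash A$). Axiom (GP1) for each minor reduces to the existence of a basis of $\underline{M}_\varphi$ extending $\bfa$ (resp.\ containing $\bfa$ disjoint from a basis of $E\setminus A$), which holds by matroid basis exchange; axiom (GP2) is immediate since both minors are obtained from $\varphi$ by fixing some trailing arguments. For (GP3)$'$ applied to $\varphi/A$ with indexing tuples $I=(x_1,\ldots,x_{r-\ell+1})$ and $J=(y_1,\ldots,y_{r-\ell-1})$ from $E\setminus A$ satisfying $|I\setminus J|=3$, I would apply the corresponding (GP3)$'$ relation for $\varphi$ to the augmented tuples $I\cup\bfa$ and $J\cup\bfa$ (still of sizes $r+1$ and $r-1$, with $|{\tilde I}\setminus{\tilde J}|=3$). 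Any summand obtained by distinguishing an element of $\bfa$ vanishes by (GP2), since then some $a_i$ would appear twice in one of the two $\varphi$-evaluations; the surviving summands are precisely the expansion of the desired relation for $\varphi/A$, with matching signs. The argument for $\varphi\backslash A$ is symmetric. For the full (GP3) in the strong case one runs the same argument starting from (GP3) for $\varphi$, requiring no change of signs. Independence from choices follows because replacing $\bfa$ by another tuple with the same underlying set rescales $\varphi/A$ (resp.\ $\varphi\backslash A$) by the sign of the permutation and by the ratio of $\varphi$-values on the two tuples --- a single nonzero scalar in $F$.

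For part (2), I would compute the support of each minor directly. A tuple $(x_1,\ldots,x_{r-\ell})$ in $E\setminus A$ satisfies $(\varphi/A)(x_1,\ldots,x_{r-\ell})\neq 0$ iff $\{x_1,\ldots,x_{r-\ell}\}\cup\{a_1,\ldots,a_\ell\}$ is a basis of $\underline{M}_\varphi$, which by the definition of contraction is equivalent to $\{x_1,\ldots,x_{r-\ell}\}$ being a basis of $\underline{M}_\varphi/A$; hence $\underline{M}_{\varphi/A}=\underline{M}_\varphi/A$. Similarly, the definition of $\bfa$ as a basis of $\underline{M}_\varphi/(E\setminus A)$ together with the ``subset of a basis'' characterization of independence gives $\underline{M}_{\varphi\backslash A}=\underline{M}_\varphi\backslash A$.

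For part (3), both sides are weak Grassmann--Pl{\"u}cker functions on $E\setminus A$ of rank $m-|A|-k$, where $k=\operatorname{rk}(E\setminus A)$ in $\underline{M}_\varphi$; by part (2) and Lemma~\ref{lem:Lemma3.2} they have the same underlying matroid, namely $(\underline{M}_\varphi\backslash A)^*=\underline{M}_\varphi^*/A$. Since equivalence classes of weak Grassmann--Pl{\"u}cker functions on a common matroid are determined by a single nonzero value, it suffices to evaluate both sides on one tuple and check they agree up to a global sign (which is absorbed in the equivalence class). For a chosen ordered basis $(x_1,\ldots,x_{m-|A|-k})$ of $(\underline{M}_\varphi\backslash A)^*$, I would write $(\varphi\backslash A)^*(x_1,\ldots)$ as the sign of the completion permutation inside $E\setminus A$ times $\varphi(x_1',\ldots,x_k',a_1,\ldots,a_{r-k})$ for some ordering of the complement in $E\setminus A$; and compute $(\varphi^*/A)(x_1,\ldots)$ as $\varphi^*(x_1,\ldots,\tilde a_1,\ldots,\tilde a_{|A|-(r-k)})$ for a maximal $\underline{M}_\varphi^*$-independent subset of $A$. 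Using that such a maximal set is precisely the complement in $A$ of $\{a_1,\ldots,a_{r-k}\}$, and unpacking the definition of $\varphi^*$, the two values agree up to the sign of a single permutation relating the two orderings of $E$.

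The main technical obstacle I anticipate is the sign bookkeeping in part (3): one must carefully reconcile the permutation sign arising in the definition of $\varphi^*$ with the reorderings implicit in choosing maximal independent sets and basis extensions. Everything else is algebraic routine once the ``kill $a_i$-repeats by (GP2)'' trick of part (1) is in place.
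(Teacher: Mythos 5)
Your core device---padding $I$ and $J$ with the fixed tuple $(a_1,\ldots,a_\ell)$ (resp.\ $(a_1,\ldots,a_{r-k})$) and using (GP2) to kill every summand whose distinguished element is some $a_i$, because that $a_i$ then repeats inside the second $\varphi$-factor---is the right way to lift (GP3) and ${\rm (GP3)}'$ to $\varphi/A$ and $\varphi\backslash A$, and together with your support computation in (2) this matches the argument the paper invokes (it simply defers to Lemmas 3.3 and 3.4 of \cite{AndersonDelucchi}). But the independence-of-choices claim in (1) is not actually proved: you only treat replacing the tuple $\bfa$ by a reordering of the \emph{same} underlying set, whereas the substantive case is replacing $\{a_1,\ldots,a_\ell\}$ by a \emph{different} maximal independent subset $\{a_1',\ldots,a_\ell'\}$ of $A$ (resp.\ a different basis of $\underline{M}_\varphi/(E\setminus A)$). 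What must be shown there is that the ratio $\varphi(x_1,\ldots,x_{r-\ell},a_1,\ldots,a_\ell)\,\varphi(x_1,\ldots,x_{r-\ell},a_1',\ldots,a_\ell')^{-1}$ is the same for \emph{every} completion $(x_1,\ldots,x_{r-\ell})$; your phrase ``the ratio of $\varphi$-values on the two tuples'' presupposes exactly this. It is not automatic: it has to be extracted from the (three-term) Grassmann--Pl{\"u}cker relations by exchanging one $a_i$ at a time, in the style of Lemma~\ref{lem:blergh}.

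Part (3) contains a second genuine gap: you reduce to checking one tuple on the grounds that an equivalence class of weak Grassmann--Pl{\"u}cker functions is determined by its underlying matroid together with a single nonzero value. That principle is false---over a field, distinct points of the Grassmannian can have the same matroid of nonzero Pl{\"u}cker coordinates, and over ${\mathbb T}$ there are many non-proportional valuated matroids with the same support---so agreement of supports plus agreement at one tuple does not give equivalence. The repair is the computation you already sketch, but carried out at an \emph{arbitrary} tuple: taking $\{b_1,\ldots,b_{|A|-r+k}\} = A\setminus\{a_1,\ldots,a_{r-k}\}$ (which is indeed a maximal $\underline{M}_\varphi^*$-independent subset of $A$), both $(\varphi\backslash A)^*$ and $\varphi^*/A$ evaluate at $(x_1,\ldots,x_{m-|A|-k})$ to $\varphi(x_1',\ldots,x_k',a_1,\ldots,a_{r-k})$ times a sign, and the two signs differ by the block-swap factor $(-1)^{k(|A|-r+k)}$ times constants depending only on the fixed orderings of $E$, $E\setminus A$ and $A$---a single global sign, which is what the statement requires. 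As written, though, the single-tuple reduction does not establish (3), and (1) is missing the well-definedness argument that the lemma explicitly asserts.
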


\subsection{Dual Pairs from Grassmann-Pl{\"u}cker functions}

Let $\varphi$ be a rank $r$ weak Grassmann-Pl{\"u}cker function on $E$ with underlying matroid $\underline M_{\varphi}$.

\begin{lemma}\label{lem:blergh}
Let $C$ be a circuit of $M_{\varphi}$, and let $e,f \in C$.  The quantity
\[
\frac{\varphi(e,x_2,\ldots,x_r)}{\varphi(f,x_2,\ldots,x_r)}
\]
is independent of the choice of $x_2,\ldots,x_r$ such that $\{ f,x_2,\ldots,x_r \}$ is a basis for $M_\varphi$ containing $C \backslash e$.  
\end{lemma}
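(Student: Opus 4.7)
My plan is to prove the independence by reducing to a single-element exchange between two bases of the form described, and then applying the 3-term Grassmann--Pl\"ucker relation ${\rm (GP3)}'$.

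First, I would observe that all four relevant values of $\varphi$ are nonzero. Since $C$ is a circuit and $C \setminus e \subseteq \{f,x_2,\ldots,x_r\}$, the set $C$ is the fundamental circuit of $e$ with respect to the basis $B = \{f,x_2,\ldots,x_r\}$; by standard matroid basis exchange, $(B \setminus f) \cup e = \{e,x_2,\ldots,x_r\}$ is then also a basis of $\underline{M}_\varphi$, so $\varphi(e,x_2,\ldots,x_r)\neq 0$. Thus both the numerator and denominator in the stated quotient lie in $F^\times$.

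Next, suppose $B = \{f,x_2,\ldots,x_r\}$ and $B' = \{f,y_2,\ldots,y_r\}$ are two bases containing $S := C \setminus e$. Then $B \setminus S$ and $B' \setminus S$ are bases of the contracted matroid $\underline{M}_\varphi / S$, so by symmetric basis exchange in $\underline{M}_\varphi / S$ they are connected by a sequence of single-element swaps, each intermediate set again a basis of $\underline{M}_\varphi / S$. Adjoining $S$ to each step gives a chain of bases of $\underline{M}_\varphi$, each containing $S$ and each obtained from the previous by a single exchange. Thus it suffices to prove the invariance of the ratio when $B' = (B \setminus x_i) \cup y$ for some $x_i \notin S$ (in particular $x_i \notin C$) and $y \notin B$.

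For this single-exchange case, I would apply ${\rm (GP3)}'$ with $I = \{e, x_i, y\} \cup \bar{x}$ and $J = \{f\} \cup \bar{x}$, where $\bar{x} = (x_2,\ldots,\hat{x}_i,\ldots,x_r)$; note $|I \setminus J| = |\{e,x_i,y\}| = 3$. Terms in the resulting sum in which the distinguished element lies in $\bar{x}$ vanish by the alternating property of $\varphi$. The key simplification is that $\varphi(e,f,\bar{x}) = 0$: since $C \setminus \{e,f\} \subseteq \bar{x}$ (as $x_i \notin C$), the $r$-element set $\{e,f\} \cup \bar{x}$ contains the circuit $C$ and is therefore not a basis. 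After this cancellation, ${\rm (GP3)}'$ reduces to
\begin{equation*}
-\,\varphi(e,y,\bar{x})\,\varphi(f,x_i,\bar{x})\;+\;\varphi(e,x_i,\bar{x})\,\varphi(f,y,\bar{x})\;\in\;N_G.
\end{equation*}
Since both summands are in $G$, Lemma~\ref{lem:negatives}(a) converts this into the equality $\varphi(e,x_i,\bar{x})\,\varphi(f,y,\bar{x}) = \varphi(e,y,\bar{x})\,\varphi(f,x_i,\bar{x})$ in $G$, which after absorbing the common sign from reordering each $r$-tuple into the canonical form $(?,x_2,\ldots,x_r)$ yields the desired equality of quotients. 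The only subtlety is verifying the sets of zeros among the GP-summands and the reduction to a single exchange that keeps $S$ in every intermediate basis; these are direct matroid arguments as sketched above.
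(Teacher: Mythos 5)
Your proof is correct and follows essentially the same route as the paper's: reduce to two bases containing $C \setminus e$ that differ in a single element, apply the 3-term relation ${\rm (GP3)}'$ so that the third term vanishes (since an $r$-set containing $C$ is dependent), convert the resulting two-term membership in $N_G$ into an equality via Lemma~\ref{lem:negatives}, and chain through single exchanges. Your version merely fills in details the paper leaves implicit (the contraction argument guaranteeing a chain of bases all containing $C \setminus e$, and the nonvanishing of the relevant values of $\varphi$), and your choice of $I$ and $J$ in ${\rm (GP3)}'$ differs superficially but produces the identical two-term relation.
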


\begin{proof}
(cf.~\cite[Lemma 4.1]{AndersonDelucchi}) 
Let $\{ f,x_2,\ldots,x_{r-1},x_r' \}$ be another basis for $M_\varphi$ containing $C \backslash e$.  
By Axiom (GP3)$'$, we have
\[
\varphi(f,x_2,\ldots,x_r) \cdot \varphi(e,x_2,\ldots,x_{r-1},x_r') -\varphi(e,x_2,\ldots,x_r) \cdot \varphi(f,x_2,\ldots,x_{r-1},x_r') \in N_G
\]
which implies, by Lemma~\ref{lem:negatives}, that
\[
\varphi(f,x_2,\ldots,x_r) \cdot \varphi(e,x_2,\ldots,x_{r-1},x_r') = \varphi(e,x_2,\ldots,x_r) \cdot \varphi(f,x_2,\ldots,x_{r-1},x_r').
\]
This proves the lemma for $\varphi$-bases which differ by a single element, and the general case follows by induction on the number of elements by which two chosen bases differ.
\end{proof}

\begin{defn} 
\label{defn:Cphi}
Define ${\mathcal C}_\varphi$ to be the collection of all $X \in F^E$ such that: 
\begin{enumerate}
\item $\underline{X}$ is a circuit of $\underline{M_{\varphi}}$
\item For every $e,f \in E$ and every basis $B=\{ f,x_2,\ldots,x_r \}$ with $\underline X \backslash e \subseteq B$, we have
\[
\frac{X(f)}{X(e)} = -\frac{\varphi(e,x_2,\ldots,x_r)}{\varphi(f,x_2,\ldots,x_r)}.
\]
\end{enumerate}
\end{defn}

It is easy to see that ${\mathcal C}_\varphi$ depends only on the equivalence class of $\varphi$.  Set ${\mathcal D}_\varphi := {\mathcal C}_{\varphi^*}$.

\begin{lemma} \label{lem:Prop4.3}
\begin{enumerate}
\item The sets ${\mathcal C}_\varphi$ and ${\mathcal D}_\varphi$ form a weak dual pair of $F$-signatures of $M_\varphi$ in the sense of \S\ref{sec:DualPairs}.
\item If $\varphi$ is a Grassmann-Pl\"ucker function then ${\mathcal C}_\varphi$ and ${\mathcal D}_\varphi$ form a dual pair.
\item ${\mathcal C}_{\varphi / e} = \cC_\varphi /e$ and ${\mathcal C}_{\varphi \backslash e} = \cC_\varphi \backslash e$.
\end{enumerate}
\end{lemma}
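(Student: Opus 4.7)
The plan is to verify in turn the signature conditions (DP1) and (DP2) and the orthogonality condition (DP3)$'$ (resp.\ (DP3) in the strong case), and then check the minor compatibility of part~(3). Because $\mathcal{D}_\varphi = \mathcal{C}_{\varphi^*}$ and Lemma~\ref{lem:Lemma3.2} identifies $\underline{M}_{\varphi^*}$ with $\underline{M}_\varphi^*$, (DP2) reduces to (DP1) applied to the dual weak Grassmann--Pl\"ucker function $\varphi^*$. For (DP1) I would show that for each circuit $C$ of $\underline{M}_\varphi$ there exists $X\in \mathcal{C}_\varphi$ with $\underline{X}=C$, unique up to scaling by $F^\times$: fix $e_0\in C$ and a basis $B\supseteq C\setminus e_0$, set $X(e_0)=1$, $X(f)=0$ for $f\notin C$, and $X(f) = -\varphi(e_0,B\setminus f)/\varphi(B)$ for $f\in C\setminus e_0$. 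Lemma~\ref{lem:blergh} makes this independent of the auxiliary basis $B$ and verifies the ratio condition of Definition~\ref{defn:Cphi} for all pairs $e,f\in C$; closure under the $F^\times$-action on $F^E$ and uniqueness up to scaling are immediate from that ratio condition.

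The heart of the argument is the orthogonality. Given $X\in\mathcal{C}_\varphi$ with support $C$ and $Y\in\mathcal{D}_\varphi$ with support $D$, elementary matroid theory rules out $|C\cap D|=1$, so I only need to treat $|C\cap D|\ge 2$. Fixing $e_0\in C\cap D$ and a basis $B$ of $\underline{M}_\varphi$ containing $C\setminus e_0$, I would substitute the ratio formulas of Definition~\ref{defn:Cphi} for $X$, and (using the defining formula for $\varphi^*$ immediately preceding Lemma~\ref{lem:Lemma3.2}) for $Y$, into $\bigl(X(e_0)Y(e_0)\bigr)^{-1}(X\cdot Y)$; after clearing a common factor this rewrites, up to a global sign, as a sum of signed products $\varphi(I\setminus x_k)\,\varphi(x_k,J)$ for a specific $(r+1)$-tuple $I$ and $(r-1)$-tuple $J$ satisfying $|I\setminus J| = |C\cap D|$. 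When $|C\cap D|\le 3$, axiom (GP3)$'$ then places $X\cdot Y$ in $N_G$, giving (DP3)$'$. In the strong case of part~(2), the identical manipulation yields an instance of the full Grassmann--Pl\"ucker relation (\ref{eq:GP3}) for arbitrary $|C\cap D|$, whose membership in $N_G$ is (GP3).

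For part~(3), Lemma~\ref{lem:MinorLemma} identifies $\underline{M}_{\varphi/e}$ with $\underline{M}_\varphi/e$ and $\underline{M}_{\varphi\setminus e}$ with $\underline{M}_\varphi\setminus e$, so it suffices to compare, circuit by circuit, the ratio formulas of Definition~\ref{defn:Cphi} applied to $\varphi$ against those applied to $\varphi/e$ (resp.\ $\varphi\setminus e$); a basis of $\underline{M}_{\varphi/e}$ extended by $e$ is a basis of $\underline{M}_\varphi$, and the definition of $\varphi/e$ (resp.\ $\varphi\setminus e$) makes the two ratios literally the same, so the two signature sets coincide.

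The main obstacle is the orthogonality computation. A single basis $B$ of $\underline{M}_\varphi$ cannot in general be chosen to contain $C\setminus e_0$ while simultaneously satisfying $B\cap D=\{e_0\}$, which is what one would need in order to evaluate the ratio formula for $Y$ directly from a basis of the dual matroid extending $D\setminus e_0$. The remedy is to evaluate the dual ratios for $Y$ against the same basis $B$ already chosen for $X$, at the cost that the resulting expression no longer factors cleanly but instead rearranges into the Grassmann--Pl\"ucker form described above. Tracking the signs coming from the ordering of $E$ in the definition of $\varphi^*$, and verifying that the signs produced this way match those in (\ref{eq:GP3}), is the most delicate piece of bookkeeping in the proof.
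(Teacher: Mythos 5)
Your handling of (DP1), (DP2) and part (3) is fine and matches the paper, but the orthogonality step has a genuine gap, precisely at the point you flag as the ``main obstacle''. The remedy you propose --- evaluating the dual ratios of $Y$ ``against the same basis $B$'' chosen for $X$ --- cannot be carried out. Definition~\ref{defn:Cphi} applied to $\varphi^*$ pins down the coordinates of $Y \in \cD_\varphi$ only through bases of $\underline{M}_{\varphi^*}$ containing $\underline{Y}\setminus e$ for some $e \in \underline{Y}$. The only dual basis your construction provides is $E\setminus B$; but since $B \supseteq \underline{X}\setminus e_0$, the set $B$ contains all of $(\underline{X}\cap\underline{Y})\setminus e_0$, so as soon as $|\underline{X}\cap\underline{Y}|\geq 3$ the complement $E\setminus B$ misses at least two elements of $\underline{Y}$ and contains $\underline{Y}\setminus e$ for no single $e$. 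Hence from $B$ alone you have no formula relating the values $Y(f)$ to values of $\varphi$, and no way to produce the asserted Grassmann--Pl\"ucker sum with $|I\setminus J| = |C\cap D|$ --- and obtaining exactly this count is the whole point in the weak case, since otherwise (GP3)$'$ cannot be invoked.

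The paper resolves this by choosing \emph{two} compatible bases rather than one: first a basis $B$ of $\underline{M}_\varphi$ with $\underline{X}\setminus e_0 \subseteq B$ and $B \cap (\underline{Y}\setminus\underline{X}) = \emptyset$ (possible since $\underline{Y}\setminus\underline{X}$ is coindependent), and then a basis $B^*$ of the dual matroid with $\underline{Y}\setminus e_0 \subseteq B^*$ and $B^* \cap (B\setminus\underline{Y}) = \emptyset$. Setting $I = \{e_0\}\cup B$ and $J = E\setminus(B^*\cup\{e_0\})$, the two disjointness conditions force $I\setminus J = \underline{X}\cap\underline{Y}$, so the Grassmann--Pl\"ucker relation for $(I,J)$ has potentially nonzero terms only at the common support elements; moreover the values $\varphi(x_i,z_1,\ldots,z_{r-1})$ with $\{z_1,\ldots,z_{r-1}\} = J$ are, via the defining formula for $\varphi^*$, exactly the data determining the ratios of $Y$, so that multiplying the relation by suitable units turns it into $X\cdot Y \in N_G$, with ${\rm (GP3)}'$ sufficing when $|\underline{X}\cap\underline{Y}|\leq 3$ and (GP3) in general. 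To repair your argument you must introduce this second, dual basis $B^*$ adapted to $\underline{Y}$ together with both disjointness conditions; without them the identity $|I\setminus J| = |C\cap D|$ that your sketch asserts simply is not available.
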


\begin{proof}
(cf.~\cite[Proposition 4.3]{AndersonDelucchi}) 
We begin by showing that every circuit $C$ of $\underline{M_{\phi}}$ is the support of an element of $\cC_{\varphi}$. Let $y_0$ be any element of $C$ and let $\{y_1 \ldots y_r\}$ be any basis of $\underline{M_{\phi}}$ extending $C\setminus y_0$. Define $X(y_i) = (-1)^{i+1} \varphi(y_0, 
\ldots \hat y_i, \ldots y_r)$ for each $i$ and $X(e) = 0$ everywhere else. Then since $\{y_0, \ldots \hat y_i, \ldots y_r)$ is a basis if and only if $y_i \in C$ we have $\underline X = C$. Now suppose that we have $e, f \in E$. We must show that for every basis $B=\{ f,x_2,\ldots,x_r \}$ with $C \backslash e \subseteq B$, we have
\[
\frac{X(f)}{X(e)} = -\frac{\varphi(e,x_2,\ldots,x_r)}{\varphi(f,x_2,\ldots,x_r)}.
\]
But by Lemma \ref{lem:blergh} it is enough to show this for a single such basis. If $f$ is not in $C$ then both sides are zero, so we may suppose that $f$ is in $C$. Say $e = y_i$ and $f = y_j$. Then taking $(x_2 \ldots x_r)$ to be $(y_0, \ldots \hat y_i, \ldots \hat y_j, \ldots y_r)$ the equation is clear from the definitions. 

The only other nontrivial thing to check is (DP3)$'$ (resp. (DP3)).  To see this, let $X \in \cC_\varphi$ and $Y \in \cD_\varphi$, assuming furthermore that 
$|\underline X \cap \underline Y| \leq 3$ if $\varphi$ is not a strong Grassmann-Pl\"ucker function.
If $\underline{X} \cap \underline{Y} = \emptyset$ then $X \perp Y$ by definition.  Otherwise, we can write $\underline{X} = \{ x_1,\ldots,x_k \}$ and $\underline{Y}=\{ y_1,\ldots,y_\ell \}$ with the elements of $\underline{X} \cap \underline{Y} = \{ x_1,\ldots,x_n \} = \{ y_1,\ldots y_n \}$ written first, so that
$n \geq 1$ and $x_i = y_i$ for $1 \leq i \leq n$.

Since $\underline{X} \backslash x_i$ is independent for all $i=1,\ldots,k$, we must have $k \leq r+1$, and similarly $\ell \leq m-r+1$.
Since $\underline X \setminus x_1$  is independent and $\underline Y \setminus \underline X$ is coindependent in the matroid $\underline M_\varphi$, we can extend $\underline X \setminus x_1$ to a base $B = \{x_2, \ldots x_{r+1}\}$ of $\underline M_\varphi$ disjoint from $\underline Y \setminus \underline X$. Similarly, since $\underline Y \setminus y_1$ is independent and $B - \underline Y$ is coindependent in the matroid $\underline M_{\varphi^*}$, we can extend $Y \setminus y_1$ to a basis $B^*$ of $M_{\varphi^*}$ which is disjoint from $B - \underline Y$. Write $E \backslash (B^* \cup y_1) = \{ z_1,\ldots,z_{r-1} \}$.
If $|\underline X \cap \underline Y| \leq 3$ then $|\{x_1, \ldots, x_{r+1} \} \setminus \{ z_1,\ldots,z_{r-1} \}| =|\underline X \cap \underline Y| \leq 3$.
By either (GP3) or (GP3)$'$, we have
\begin{equation}
\label{eq:Prop4.3a}
\begin{aligned}
&\sum_{i=1}^{r+1} (-1)^i \varphi(x_1,\ldots,\hat{x}_i,\ldots,x_{r+1}) \varphi(x_i,z_1,\ldots,z_{r-1}) \\
&=  \sum_{i=1}^{n} (-1)^i \varphi(x_1,\ldots,\hat{x}_i,\ldots,x_{r+1}) \varphi(x_i,z_1,\ldots,z_{r-1}) \\
&=  \sum_{i=1}^{n} \sigma \cdot \varphi(x_1,\ldots,\hat{x}_i,\ldots,x_{r+1}) \noinvolution{\varphi^*(y_1,\ldots,\hat{y}_i,\ldots,y_{m-r+1})} \in N_G, \\
\end{aligned}
\end{equation}
where 
\[
\sigma = (-1)^{r-1} {\rm sign}(z_1,\ldots,z_{r-1},y_1,\ldots,y_{m-r+1}).
\]

Multiplying both sides of (\ref{eq:Prop4.3a}) 
by $\sigma \cdot \varphi(x_2,\ldots,x_{r+1})^{-1}\noinvolution{\varphi^*(y_2,\ldots,y_{m-r+1})}^{-1}$
gives
\begin{equation}
\label{eq:Prop4.3b}
\sum_{i=1}^{n} X(x_1)^{-1} X(x_i)  \noinvolution{Y(x_i)} \noinvolution{Y(y_1)}^{-1} \in N_G.
\end{equation}

Multiplying both sides of (\ref{eq:Prop4.3b}) by $X(x_1)\noinvolution{Y(y_1)}$ then shows that $X \perp Y$.
\end{proof}

\begin{cor}
\label{cor:Cor4.4}
With notation as in Lemma~\ref{lem:Prop4.3}, we have:
\begin{enumerate}
\item For $X \in \cC_\varphi$ and $x_i,x_j \in \underline{X}$,
\[
\frac{X(x_i)}{X(x_j)} = (-1)^{i-j} \frac{\varphi(x_1,\ldots,\hat{x}_i,\ldots,x_{r+1})}{\varphi(x_1,\ldots,\hat{x}_j,\ldots,x_{r+1})}.
\]
\item For $Y \in \cD_\varphi$ and $y_i, y_j \in \underline{Y}$,
\[
\noinvolution{Y(y_j)} \noinvolution{Y(y_i)}^{-1} = \varphi(y_j,z_1,\ldots,z_{r-1}) \varphi(y_i,z_1,\ldots,z_{r-1})^{-1}.
\]
\end{enumerate}
\end{cor}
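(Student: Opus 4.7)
The plan is to prove part (1) by direct computation from Definition \ref{defn:Cphi} combined with the alternating property (GP2) of $\varphi$, and then to deduce part (2) by applying part (1) to the dual weak Grassmann--Pl\"ucker function $\varphi^*$ and translating back to $\varphi$ via the definition of $\varphi^*$.

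For part (1), I will use the basis $B = \{x_2, \ldots, x_{r+1}\}$ of $\underline{M}_\varphi$ extending $\underline{X} \setminus x_1$, fixed as in the proof of Lemma \ref{lem:Prop4.3}. Since $\underline{X}$ is the fundamental circuit of $x_1$ with respect to $B$, swapping $x_1$ for any $x_i \in \underline{X} \setminus x_1$ yields another basis $\{x_1\} \cup (B \setminus x_i)$. Applying the defining identity of $\cC_\varphi$ with $e = x_1$, $f = x_i$ on this basis gives
\[
\frac{X(x_i)}{X(x_1)} = -\,\frac{\varphi(x_1, x_2, \ldots, \hat{x}_i, \ldots, x_{r+1})}{\varphi(x_i, x_2, \ldots, \hat{x}_i, \ldots, x_{r+1})}.
\]
Since (GP2) lets me transport $x_i$ from the first position to the $(i{-}1)$-st, the denominator equals $(-1)^{i-2}\varphi(x_1, \ldots, \hat{x}_1, \ldots, x_{r+1})$, so
\[
\frac{X(x_i)}{X(x_1)} = (-1)^{i-1}\,\frac{\varphi(x_1, \ldots, \hat{x}_i, \ldots, x_{r+1})}{\varphi(x_1, \ldots, \hat{x}_1, \ldots, x_{r+1})}.
\]
Dividing this formula for two indices $i, j \in \{1, \ldots, k\}$ yields the identity in part (1) (the case $j=1$ being the formula itself, and $i=j$ being trivial).

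For part (2), by Lemma \ref{lem:Lemma3.2} the function $\varphi^*$ is again a weak Grassmann--Pl\"ucker function, and by construction $Y \in \cC_{\varphi^*}$ with $B^* = \{y_2, \ldots, y_{m-r+1}\}$ serving as the basis of $\underline{M}_{\varphi^*}$ extending $\underline{Y} \setminus y_1$. Applying part (1) to $\varphi^*$ gives
\[
\frac{Y(y_j)}{Y(y_i)} = (-1)^{j-i}\,\frac{\varphi^*(y_1, \ldots, \hat{y}_j, \ldots, y_{m-r+1})}{\varphi^*(y_1, \ldots, \hat{y}_i, \ldots, y_{m-r+1})}.
\]
I will then unfold the definition of $\varphi^*$: for $k \in \{i,j\}$ the complement in $E$ of $\{y_1,\ldots,\hat{y}_k,\ldots,y_{m-r+1}\}$ is $\{y_k, z_1, \ldots, z_{r-1}\}$, so
\[
\varphi^*(y_1,\ldots,\hat{y}_k,\ldots,y_{m-r+1}) = \mathrm{sign}(y_1,\ldots,\hat{y}_k,\ldots,y_{m-r+1},y_k,z_1,\ldots,z_{r-1})\,\varphi(y_k,z_1,\ldots,z_{r-1}).
\]
Returning $y_k$ from position $m-r+1$ to its natural $k$-th position costs $(m-r+1)-k$ transpositions, so the ratio of the two sign factors (for $k=j$ over $k=i$) is $(-1)^{i-j}$. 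This cancels the $(-1)^{j-i}$ above, producing the sign-free identity in part (2).

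Both steps are short computations from the definitions, and I do not anticipate any serious obstacle. The only delicate point is the sign bookkeeping in part (2), where the permutation signs arising from the definition of $\varphi^*$ must combine correctly with the $(-1)^{j-i}$ from part (1); the key observation making this work is that the sign displacement of $y_k$ from position $k$ to position $m-r+1$ depends linearly on $k$, so the $k$-dependence in the sign ratio is exactly of the form needed to cancel the overall factor coming from part (1).
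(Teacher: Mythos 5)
Your argument is correct and is essentially the proof the paper delegates to Anderson--Delucchi: part (1) is Definition~\ref{defn:Cphi} applied with $e=x_1$, $f=x_i$ to the basis $B$ together with the alternating property (GP2), and part (2) follows by applying part (1) to $\varphi^*$ and tracking the permutation signs in the definition of the dual Grassmann--Pl{\"u}cker function, exactly as in the cited computation. One small wording slip: the displayed identity for $X(x_i)/X(x_1)$ comes from applying the defining identity of $\cC_\varphi$ to the basis $B$ itself (written with $x_i$ in the first slot), not to $\{x_1\}\cup(B\setminus x_i)$, which does not contain $f=x_i$; the exchanged basis is only relevant for seeing that $\varphi(x_1,\ldots,\hat x_i,\ldots,x_{r+1})\neq 0$ when $x_i\in\underline X$, so that the final ratio is well-defined.
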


\begin{proof}
This follows from the same argument as \cite[Corollary 4.4]{AndersonDelucchi}.
\end{proof}

\subsection{Grassmann-Pl{\"u}cker functions from Dual Pairs}

In the previous section, we associated a (weak) dual pair $(\cC_\varphi,\cD_\varphi)$, depending only on the equivalence class of $\varphi$, to each (weak) Grassmann-Pl{\"u}cker function $\varphi$.
However, we don't yet know that $\cC_\varphi$ and $\cD_\varphi$ satisfy the modular elimination axiom (although this will turn out later to be the case).
In this section, we go the other direction, associating a (weak) Grassmann-Pl{\"u}cker function to a (weak) dual pair.

\begin{theorem}
\label{thm:Prop4.6}
Let $\cC$ and $\cD$ be a weak dual pair of $F$-signatures of a matroid $\underline{M}$ of rank $r$.
Then $\cC = \cC_\varphi$ and $\cD = \cD_\varphi$ for a rank $r$ weak Grassmann-Pl{\"u}cker function $\varphi$ which is
uniquely determined up to equivalence. If $\cC$ and $\cD$ form a dual pair then $\varphi$ is a Grassmann-Pl\"ucker function.
\end{theorem}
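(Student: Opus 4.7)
The plan is to construct $\varphi$ up to equivalence by choosing a reference basis $B_0$ of $\underline M$, fixing an ordering $B_0 = (b_1^0, \ldots, b_r^0)$, and setting $\varphi(b_1^0, \ldots, b_r^0) := 1$. For any other basis $B$, I would propagate the value via a sequence of elementary basis exchanges $B_0 = B^{(0)}, B^{(1)}, \ldots, B^{(k)} = B$, where $B^{(i+1)} = (B^{(i)} \setminus f_i) \cup e_i$. At each step, choose the unique element $X_i \in \cC$ whose support is the fundamental circuit $C(B^{(i)}, e_i)$ and which is normalized so that $X_i(e_i) = 1$, and define the ratio $\varphi(B^{(i+1)})/\varphi(B^{(i)})$ to be $-X_i(f_i)$ with the sign correction dictated by Definition~\ref{defn:Cphi}; extend to all ordered $r$-tuples by alternation (GP2), and set $\varphi = 0$ on non-bases.

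The main obstacle is well-definedness: the value $\varphi(B)$ must not depend on the chosen exchange sequence from $B_0$ to $B$. Standard graph-theoretic reductions on the basis-exchange graph mean it suffices to verify the relation around ``short'' cycles, and the critical case is two consecutive exchanges whose fundamental circuits form a modular pair. To close this case, I would pair the two relevant elements of $\cC$ with a fundamental cocircuit $Y \in \cD$ crossing the affected basis edges, so that $|\underline X \cap \underline Y| \leq 3$ and the weak orthogonality condition (DP3)$'$ applies; the resulting relation $X \cdot Y \in N_G$ forces the two products of circuit ratios (from the two orders of exchange) to agree. This is the direct analogue of the well-definedness argument in \cite[Proposition 4.6]{AndersonDelucchi}, but one must check carefully that every step of their argument uses only triple intersections, since we only have (DP3)$'$ at our disposal.

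Once $\varphi$ is well-defined, (GP1) and (GP2) are automatic, and the support of $\varphi$ is by construction the set of bases of $\underline M$. For the 3-term Grassmann-Pl\"ucker relations (GP3)$'$, given subsets $I, J \subseteq E$ with $|I| = r+1$, $|J| = r-1$, $|I \setminus J| = 3$, I would pick the element $X \in \cC$ supported on the circuit contained in $I$ and the element $Y \in \cD$ supported on the cocircuit contained in $E \setminus J$; orthogonality $X \cdot Y \in N_G$ via (DP3)$'$, combined with the ratio formulas of Corollary~\ref{cor:Cor4.4} applied to $\varphi$ and $\varphi^*$, translates directly into the desired Grassmann-Pl\"ucker relation. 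The identifications $\cC = \cC_\varphi$ and $\cD = \cD_\varphi$ then follow from Definition~\ref{defn:Cphi}: the circuit ratios $X(f)/X(e)$ for $X \in \cC$ exactly match the ratios that $\varphi$ assigns to adjacent bases, and the dual identification uses Lemma~\ref{lem:Lemma3.2}.

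For the strong case, the construction of $\varphi$ is identical, and verifying the full (GP3) proceeds by the same argument but using (DP3) instead of (DP3)$'$, since we no longer need to bound $|\underline X \cap \underline Y|$ by 3. Finally, uniqueness up to equivalence is a formality: if $\varphi'$ is any other weak Grassmann-Pl\"ucker function satisfying $\cC_{\varphi'} = \cC$, then Corollary~\ref{cor:Cor4.4} pins down the ratios $\varphi'(B')/\varphi'(B)$ for adjacent bases in terms of $\cC$ alone, so $\varphi'$ agrees with $\varphi$ after rescaling by $\varphi'(B_0) \in F^\times$.
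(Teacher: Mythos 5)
Your proposal is correct in outline and follows essentially the same route as the paper, whose proof simply defers Steps 1--2 (computing the ratios of $\varphi$ between adjacent bases from $\cC$, checking their consistency, and identifying $\cC = \cC_\varphi$, $\cD = \cD_\varphi$) to the 4.5-page argument of \cite{AndersonDelucchi} and writes out only the Step-3 computation deriving (GP3) from $X \perp Y$, noting that the weak case needs ${\rm (DP3)}'$ only where at most three summands are nonzero. Your basis-exchange bookkeeping for well-definedness and your orthogonality-based derivation of ${\rm (GP3)}'$/(GP3) are the same ingredients, just organized slightly differently.
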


\begin{proof}
The proof of this result, while rather long and technical, is essentially the same as the special case of phased matroids given in \cite[Proposition 3.6]{AndersonDelucchi}.
Rather than reproduce the entire argument, which takes up 4.5 pages of \cite{AndersonDelucchi}, we will content ourselves with indicating the (minor) changes which need to be made in the present context.

Steps 1 and 2 from {\em loc.~cit.} go through without modification. In Step 1, the correct ratios of the values of $\varphi$ between pairs of bases differing in just 2 elements are calculated, and it is shown that these ratios are consistent with one another. In Step 2, these ratios are used to define the function $\varphi$, and it is shown that $\cC = \cC_{\varphi}$ and $\cD = \cD_{\varphi}$.


In Step 3, equations (3) and (4) and the assumption $X \perp Y$ show (with notation from {\em loc.~cit.}) that
\begin{equation}
\label{eq:Prop4.6}
\begin{aligned}
&\sum_{x_i \in C_S \cap D_T} X(x_i) \noinvolution{Y(x_i)} \\
&= \sum_{x_i \in C_S \cap D_T} X(x_0)^{-1} X(x_i) Y(x_i) Y(x_0)^{-1} \\
&= \sum_{x_i \in C_S \cap D_T} (-1)^i \varphi_{\cC}(x_1,\ldots,x_r)^{-1} \varphi_{\cC}(x_0,\ldots,\hat{x}_i,\ldots,x_r)\varphi_{\cC}(x_i,y_2,\ldots,y_r) \varphi_{\cC}(x_0,y_2,\ldots,y_r)^{-1} \in N_G,
\end{aligned}
\end{equation}
and multiplying both sides of (\ref{eq:Prop4.6}) by 
$\varphi_{\cC}(x_1,\ldots,x_r) \varphi_{\cC}(x_0,y_2,\ldots,y_r)$ gives 
\[
\sum_{x_i \in C_S \cap D_T} (-1)^i  \varphi_{\cC}(x_0,\ldots,\hat{x}_i,\ldots,x_r) \varphi_{\cC}(x_i,y_2,\ldots,y_r) \in N_G, 
\]
which is (GP3).

The proof of (GP3)$'$ from (DP3)$'$ is given by the same calculation, but applied only in cases where the sums in question have at most 3 nonzero summands.
\end{proof}

\subsection{From Grassmann-Pl{\"u}cker functions to Circuits}

In this section, we prove that the set $\cC_\varphi$ of elements of $F^E$ induced by a (weak) Grassmann-Pl{\"u}cker function $\varphi$ is the set of $F$-circuits of a (weak) $F$-matroid with support $\underline M_{\varphi}$.
The only non-trivial axiom is the Modular Elimination axiom (C3)$'$ (resp. (C3)).  

\begin{theorem}
\label{thm:Prop5.3}
{
Let $\varphi$ be a strong (resp. weak) Grassmann-Pl{\"u}cker function on $E$.  Then the set $\cC_\varphi \subseteq F^E$ satisfies the strong Modular Elimination axiom ${\rm (C3)}$  (resp. the weak Modular Elimination axiom ${\rm (C3)}'$).
}
\end{theorem}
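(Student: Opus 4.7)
The plan is to construct the eliminated circuit $Z$ explicitly in terms of ratios of values of $\varphi$ (via Definition~\ref{defn:Cphi}), and then verify the required null-set conditions by reducing them to Grassmann--Pl\"ucker relations.

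First, I would exploit the matroid-theoretic fact that modular pairs (resp.\ modular families) of circuits of $\underline M_\varphi$ can always be realized as fundamental circuits relative to a common basis. By Lemma~\ref{lem:white}, if $X,Y\in\cC_\varphi$ form a modular pair then there exist a basis $B=\{x_2,\ldots,x_{r+1}\}$ of $\underline M_\varphi$ and distinct $e_1,e_2\in E\setminus B$ with $\underline X = C(B,e_1)$ and $\underline Y=C(B,e_2)$. Ordinary circuit elimination in $\underline M_\varphi$ then produces the unique circuit $\underline Z \subseteq (\underline X\cup\underline Y)\setminus e$, which necessarily contains the symmetric difference $\underline X\,\Delta\,\underline Y$. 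The analogous statement for modular families of size $k+1$ is a standard matroid fact that I would invoke to handle the strong case.

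Next, I would take $Z$ to be the unique element of $\cC_\varphi$ with support $\underline Z$ produced by Definition~\ref{defn:Cphi}, rescaled by an element of $F^\times$ chosen so that the identity $X(f)+Y(f)=Z(f)$ holds at a single convenient $f\in\underline X\,\Delta\,\underline Y$, where exactly one of $X(f),Y(f)$ is nonzero so the scaling is uniquely pinned down. For an arbitrary coordinate $f\in E$ the claimed membership $X(f)+Y(f)-Z(f)\in N_G$ can, after multiplying through by a common nonzero normalization and applying the ratio formulas from Corollary~\ref{cor:Cor4.4}, be rewritten as an expression of the form
\[
\sum_{i}(-1)^{i}\,\varphi(x_1,\ldots,\hat x_i,\ldots,x_{r+1})\cdot\varphi(x_i,y_1,\ldots,y_{r-1})\in N_G,
\]
for a judiciously chosen pair $(I,J)=(\{x_1,\ldots,x_{r+1}\},\{y_1,\ldots,y_{r-1}\})$ depending on $f$. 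In the weak case the modular pair hypothesis together with the matching coordinate $X(e)=-Y(e)$ lets us choose $(I,J)$ with $|I\setminus J|=3$, so that only three terms of the sum are nonzero on support grounds; the relation then follows from ${\rm(GP3)}'$, yielding ${\rm(C3)}'$. In the strong case the same calculation is carried out with $|I\setminus J|$ possibly larger, simultaneously eliminating all the $e_i$ by means of the full relation (GP3).

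The main obstacle will be the combinatorial bookkeeping: one must arrange $(I,J)$ (and the order of the $x_i$, $y_j$) so that terms of the Grassmann--Pl\"ucker sum not corresponding to $X(f)$, $Y(f)$, or $Z(f)$ vanish for support reasons, and so that the signs coming from (GP2) match those appearing in Definition~\ref{defn:Cphi}. A secondary subtlety is that the scale of $Z$ is fixed at a single coordinate of $\underline X\,\Delta\,\underline Y$, yet we need the identity to hold at every coordinate; this consistency is itself a consequence of the ratio relations of Corollary~\ref{cor:Cor4.4} together with further instances of the Grassmann--Pl\"ucker relations. Once the weak case is cleanly established, the strong case should follow by essentially the same scheme, with longer GP sums encoding the simultaneous elimination of $e_1,\ldots,e_k$ and the realization of the modular family as a family of fundamental circuits playing the role that Lemma~\ref{lem:white} plays for pairs.
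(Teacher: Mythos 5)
Your overall strategy is the paper's: take $Z\in\cC_\varphi$ whose support is the circuit produced by matroid-level elimination, pin down its scale at one coordinate where only $X$ is nonzero (the paper sets $Z(z)=X(z)$ for $z\in\underline X\setminus\bigcup_i\underline X_i$), and then, for each coordinate $f$, reduce the membership $X(f)+\sum_i X_i(f)-Z(f)\in N_G$ to a single instance of (GP3) (resp.\ ${\rm(GP3)}'$) in which all other terms vanish for support reasons. However, the decisive step — \emph{which} $(r+1)$-tuple and $(r-1)$-tuple to feed into the Grassmann--Pl\"ucker relation — is exactly what you leave as ``bookkeeping,'' and the one concrete device you propose for organizing it does not do the job. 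You suggest realizing the whole modular family as fundamental circuits of a single common basis (generalizing Lemma~\ref{lem:white}(5)). Even granting that such a basis exists, it is not the right scaffolding: with respect to it the eliminated circuit $Z$ is in general not a fundamental circuit, and no single GP relation built from that one basis produces the terms $X(f)$, $X_i(f)$, $-Z(f)$ and nothing else. What actually makes the computation work (and what the paper does) is to use \emph{two} bases: setting $A=\underline X\cup\bigcup_i\underline X_i$, one checks that $I=A\setminus\{z,e_1,\ldots,e_k\}$ is a basis of $\underline M|A$ — so that $Z$ and each $X_i$ (but \emph{not} $X$, which has the $k+1$ elements $z,e_1,\ldots,e_k$ outside $I$) are fundamental circuits relative to it — extends $I$ and a basis $J\supseteq\underline X\setminus\{z\}$ of $\underline M|A$ by a common basis $K$ of $\underline M/A$ to bases $B_2=I\cup K$ and $B_1=J\cup K$ of $\underline M$, and then applies (GP3) to the pair $\bigl(\{z\}\cup B_1,\;B_2\setminus\{f\}\bigr)$. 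The ratio formulas of Definition~\ref{defn:Cphi} (via Lemma~\ref{lem:blergh} and Corollary~\ref{cor:Cor4.4}) convert the surviving summands into $\lambda_1\lambda_2\bigl(-Z(f)+X(f)+\sum_i X_i(f)\bigr)$ for fixed units $\lambda_1,\lambda_2$, so each coordinate is settled by one relation; in particular your worried-about ``consistency of the normalization across coordinates'' needs no separate argument with further GP relations.

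A second, smaller point: the weak case is not a stepping stone to be ``extended'' to the strong case; in the paper it is literally the $k=1$ specialization of the strong computation, and the only reason the weak hypothesis ${\rm(GP3)}'$ suffices there is that the two tuples above then differ in only the three elements $z,f,e_1$. Your sketch gets the weak case roughly right in outline, but without exhibiting the tuples and checking that the non-surviving terms vanish (every $x_s\notin A$ kills the first factor because $\underline X\subseteq\{z\}\cup B_1$, every $x_s\in I\setminus\{f\}$ kills the second because it repeats an entry of $B_2\setminus\{f\}$), the proof is not yet there, and the strong case as you describe it would need to be reorganized along the two-basis lines above rather than around a common basis for the modular family.
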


\begin{proof}
We prove the strong case first.
Let $\underline M$ be the matroid on $E$ corresponding to the support of $\varphi$. Suppose we have a modular family $X, X_1, \ldots X_k$ and elements $e_1 \ldots e_k \in E$ as in ${\rm(C3)}$. Let $z$ be any element of $\underline X \setminus \bigcup_{i = 1}^k \underline X_i$. Let $A = \underline X \cup \bigcup_{i = 1}^k \underline X_i$, and consider the matroid $N = \underline M |A$.  Since $A$ has height $k + 1$ in the lattice of unions of circuits of $\underline M$, the rank of $N^*$ is $k + 1$. Thus the rank of $N$ is $|A| - k - 1$. The set $I = A \setminus \{z, e_1, \ldots e_k\}$ has this rank and is spanning, so it is a basis of $N$. Let $Z \in \cC_{\varphi}$ with $\underline Z$ given by the fundamental circuit of $z$ with respect to $I$ and with $Z(z) = X(z)$. It is clear that $Z(e_i) = 0$ for $1 \leq i \leq k$. We must show that for any $f \in E$ we have $-Z(f) + X(f) + \sum_{i=1}^k X_i(f) \in N_G$. This is clear if $f$ is $z$ or one of the $e_i$ or if $f \not \in A$, so we may suppose that $f \in I$.

Let $J$ be a basis of $N$ including $\underline X \setminus\{ z\}$ and let $K$ be a basis of $M/A$. Then $B_1 = J \dot \cup K$ and $B_2 = I \dot \cup K$ are bases of $M$. Let $x_1 = z$ and let $x_2, \ldots, x_{r+1}$ enumerate $B_1$. Let $y_1, \ldots, y_{r-1}$ enumerate $B_2 \setminus \{f\}$. We define the constants $\lambda_1$ and $\lambda_2$ by
$$\lambda_1 = \varphi(x_2, \ldots x_{r+1})X(z)^{-1} \qquad \lambda_2 = \varphi(f, y_1, \ldots, y_{r-1}).$$
Consider any $i$ with $2 \leq i \leq r$. If $x_i \not \in \underline X$ then $\{x_1, \ldots, \hat x_i, \ldots x_{r+1}\}$ is not a basis, so $\varphi(x_1, \ldots, \hat x_i, \ldots x_{r+1}) = 0$. If $x_i \in \underline X$ then $$\frac{X(x_i)}{X(z)} = -\frac{\varphi(z, x_2, \ldots, \hat x_i, \ldots x_{r+1})}{\varphi(x_i, x_2, \ldots  \hat x_i, \ldots x_{r+1})}$$ and in either case it follows that $$\varphi(x_1, \ldots, \hat x_i, \ldots x_{r+1}) = (-1)^i \lambda_1 X(x_i).$$
This formula also clearly holds for $i = 1$.

For $1 \leq i \leq k$, if $f \not \in \underline X_i$ then $\{e_i, y_1, \ldots y_{r-1}\}$ is not a basis of $\underline M$, so $\varphi(e_i, y_1, \ldots y_{r-1}) = 0$. If $f \in \underline X_i$ then we have $$\frac{X_i(f)}{X_i(e_i)} = - \frac{\varphi(e_i, y_1, \ldots, y_{r-1})}{\varphi(f, y_1, \ldots, y_{r-1})}.$$ In either case, it follows that $$\varphi(e_i, y_1, \ldots, y_{r-1}) = \lambda_2 \frac{X_i(f)}{X(e_i)}.$$ Similarly we have $$\varphi(z, y_1, \ldots, y_{r-1}) = -\lambda_2\frac{Z(f)}{X(z)}.$$

Applying (GP3) we have
$$\sum_{s = 1}^{r+1} (-1)^s \varphi(x_1, \ldots, \hat x_s, \ldots, x_{r+1}) \varphi(x_s, y_1, \ldots y_{r-1}) \in N_G.$$
Many of these summands are 0. If $x_s \not \in A$ then $\varphi(x_1, \ldots, \hat x_s, \ldots, x_{r+1}) = 0$. If $x_s \in I \setminus \{f\}$ then $\varphi(x_s, y_1, \ldots, y_{r-1}) = 0$. The only other possibilities are $x_s = z$, $x_s = f$, or $x_s = e_i$ for some $i$. 
So we have
\[
-\lambda_1\lambda_2Z(f) + \lambda_1\lambda_2X(f) + \sum_{i = 1}^k\lambda_1\lambda_2 X_i(f) \\
= \lambda_1\lambda_2 \left(-Z(f) + X(f) + \sum_{i = 1}^k X_i(f)  \right) \in N_G, \\
\]
from which it follows that $- Z(f) + X(f) +  \sum_{i=1}^k X_i(f) \in N_G$.

The proof for weak Grassman-Pl{\"u}cker functions is essentially the same, but in the special case that $k = 1$. This ensures that $|\{x_1, \ldots, x_{r+1}\} \setminus \{y_1, \ldots y_{r-1}\}| = |\{z, f, e_1\}| \leq 3$, so that (GP3)$'$ can be applied instead of (GP3).
\end{proof}

\subsection{From Circuits to Dual Pairs}

We begin with the following result giving a weak version of the modular elimination axiom which holds for pairs of $F$-circuits that are not necessarily modular.

\begin{lemma}
\label{lem:Lemma5.4}
Let $\cC$ be the set of $F$-circuits of a weak $F$-matroid $M$.  Then for all $X,Y \in \cC$, $e,f \in E$ with $X(e)=-Y(e) \neq 0$ and $Y(f) \neq -X(f)$, there is $Z \in \cC$ with 
$f \in \underline{Z} \subseteq (\underline{X} \cup \underline{Y}) \backslash e$.
\end{lemma}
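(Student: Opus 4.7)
The plan is to split the proof into cases depending on where $f$ lies relative to $\underline X$ and $\underline Y$, and further (in the harder subcase) on whether $(X,Y)$ forms a modular pair. First observe that the hypothesis $Y(f) \neq -X(f)$ forces at least one of $X(f), Y(f)$ to be nonzero (else $Y(f)=0=-X(f)$), so $f \in \underline X \cup \underline Y$, and necessarily $f \neq e$ (since $X(e)=-Y(e)$ at $e$).

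If $f \in \underline X \,\Delta\, \underline Y$, classical strong circuit elimination in the underlying matroid $\underline M$ yields a matroid-circuit $C \subseteq (\underline X \cup \underline Y) \setminus e$ with $f \in C$; since the circuits of $\underline M$ are precisely the supports of $F$-circuits of $M$, there is $Z \in \cC$ with $\underline Z = C$, and we are done.

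The substantive case is $f \in \underline X \cap \underline Y$, so both $X(f), Y(f) \in G$. Suppose first that $(X,Y)$ is a modular pair. Applying (C3)$'$ at the element $e$ yields $Z \in \cC$ with $Z(e)=0$ and $X(g)+Y(g)-Z(g) \in N_G$ for all $g \in E$. For $g \notin \underline X \cup \underline Y$ we have $X(g)=Y(g)=0$, so $-Z(g) \in N_G$, and Lemma~\ref{lem:negatives}(c) forces $Z(g)=0$; hence $\underline Z \subseteq (\underline X \cup \underline Y)\setminus e$. For $g=f$: if $Z(f)=0$ then $X(f)+Y(f) \in N_G$, and since both $X(f), Y(f) \in G$, Lemma~\ref{lem:negatives}(a) yields $Y(f) = -X(f)$, contradicting the hypothesis. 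Hence $f \in \underline Z$, completing this subcase.

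It remains to treat $f \in \underline X \cap \underline Y$ with $(X,Y)$ not a modular pair. The plan is to proceed by induction on $|\underline X \cup \underline Y|$. By Lemma~\ref{lem:white}, non-modularity is equivalent to $r(\underline X \cup \underline Y) < |\underline X \cup \underline Y| - 2$, so there exist distinct matroid-circuits $C_1 \neq C_2$ of $\underline M$ with $C_1 \cup C_2 \subsetneq \underline X \cup \underline Y$. Lifting these to $F$-circuits and choosing appropriate scalings, one aims to construct intermediate modular pairs (strictly inside $\underline X \cup \underline Y$) and apply (C3)$'$ together with the inductive hypothesis to produce the desired $Z$. The main obstacle will be propagating the non-cancellation condition $Y(f) \neq -X(f)$ through the induction: the lifted circuits and their scalings must be chosen so that the analogous inequality survives for each pair fed to the inductive hypothesis, which will require a delicate case analysis based on how $f$ and $e$ meet $C_1$ and $C_2$.
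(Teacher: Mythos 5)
Your first two cases are fine: for $f \in \underline{X}\,\Delta\,\underline{Y}$ the claim is purely about the underlying matroid and follows from classical strong circuit elimination, and for a modular pair with $f \in \underline{X}\cap\underline{Y}$ your use of ${\rm (C3)}'$ together with Lemma~\ref{lem:negatives} (parts (a) and (c)) correctly shows $\underline{Z}\subseteq(\underline{X}\cup\underline{Y})\setminus e$ and $Z(f)\neq 0$. (You should also note, as you implicitly need, that $\underline{X}\neq\underline{Y}$: otherwise (C2) gives $X=-Y$, contradicting $Y(f)\neq -X(f)$.)

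However, the third case --- $f \in \underline{X}\cap\underline{Y}$ with $(X,Y)$ not a modular pair --- is where the entire content of the lemma lies, and you have not proved it; you have only announced an induction on $|\underline{X}\cup\underline{Y}|$ and then explicitly flagged, without resolving, the key difficulty: how to choose the intermediate $F$-circuits and their scalings so that at each inductive step one has both an elimination element $e'$ with opposite values and an element at which the non-cancellation hypothesis (the analogue of $Y(f)\neq -X(f)$) still holds, while the union of supports strictly shrinks. Producing such a pair (and verifying the sign bookkeeping in the tract, where ``agreement up to sign'' is all one can extract from $N_G$) is exactly the delicate part; it is also where errors are easy to make, as the errata in Appendix~\ref{sec:errata} concerning the hypotheses of \cite[Lemma 5.4]{AndersonDelucchi} illustrate. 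The paper itself does not reprove this: it imports precisely this inductive argument from the proof of \cite[Lemma 5.4]{AndersonDelucchi} (together with \cite[Proposition 5.1]{AndersonDelucchi}), reinterpreted for tracts. So your outline is pointed in a reasonable direction --- it is essentially the Anderson--Delucchi strategy --- but as written it is a plan, not a proof, and the missing construction is the substance of the result.
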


\begin{proof}
This follows from the proof of \cite[Lemma 5.4]{AndersonDelucchi}, where $X'(g) \leq X(g)$ in {\em loc.~cit.} is interpreted to mean that $X'(g)=0$ or $X'(g)=X(g)$ (and similarly for $Y'(g)$ and $Y(g)$).  
Note that the proof of \cite[Proposition 5.1]{AndersonDelucchi}, which is used in the proof of Lemma 5.4 of {\em loc.~cit.}, holds {\em mutatis mutandis} for weak matroids over a hyperfield $F$.
\end{proof}

The proof of the following result diverges somewhat from the treatment of the analogous assertion in \cite{AndersonDelucchi}.

\begin{theorem}
\label{thm:Prop5.6}
Let $\cC$ be the $F$-circuit set of a weak $F$-matroid $M$.  There is a unique $F$-signature $\cD$ of $\underline{M}^*$  such that $(\cC, \cD)$ form a weak dual pair of
$F$-signatures of $\underline{M}$. 
If $M$ is a strong $F$-matroid then $(\cC, \cD)$ form a dual pair.
\end{theorem}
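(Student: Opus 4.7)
The plan is to construct $\cD$ cocircuit-by-cocircuit: for each cocircuit $D$ of $\underline M$, I will produce (uniquely up to $F^\times$-scaling) an element $Y_D \in F^E$ with $\underline{Y_D} = D$, and then take $\cD$ to be the union of the resulting $F^\times$-orbits. The element $Y_D$ is specified by prescribing all ratios $Y_D(e_1)/Y_D(e_2)$ for $e_1, e_2 \in D$ via two-element orthogonality.

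Uniqueness is the easy direction. Given $e_1, e_2 \in D$, a standard matroid argument produces a basis $B$ of $\underline M$ with $B \cap D = \{e_1\}$, and the fundamental circuit $C(B, e_2)$ then meets $D$ in exactly $\{e_1, e_2\}$ (to see $e_1 \in C(B,e_2)$, note that $(B \setminus \{e_1\}) \cup \{e_2\}$ is a basis, since $B \setminus \{e_1\} \subseteq E \setminus D$ spans the hyperplane $E \setminus D$). If $(\cC, \cD)$ is a weak dual pair and $Y \in \cD$ has $\underline Y = D$, then for any $F$-circuit $X \in \cC$ with $\underline X = C(B,e_2)$, orthogonality $X \cdot Y \in N_G$ collapses to the two-term sum $X(e_1)Y(e_1) + X(e_2)Y(e_2) \in N_G$, forcing $Y(e_1)/Y(e_2) = -X(e_2)/X(e_1)$ by Lemma~\ref{lem:negatives}.

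For existence, I take these ratios as the definition of $Y_D$ and must verify consistency: first, the ratio $-X(e_2)/X(e_1)$ should be independent of the choice of $X \in \cC$ with $\underline X \cap D = \{e_1, e_2\}$; second, the ratios must be transitive across triples $e_1, e_2, e_3 \in D$. Both checks are done by applying weak modular elimination (C3)$'$ to a suitable modular pair of $F$-circuits whose supports both meet $D$ in $\{e_1, e_2\}$, eliminating at $e_1$: the resulting $F$-circuit $Z$ must have support disjoint from $D$ (since otherwise $\underline Z \cap D$ would be the single element $e_2$, impossible for a circuit meeting the cocircuit $D$), so $Z(e_2)=0$, and the equation $X(e_2) - X'(e_2) - Z(e_2) \in N_G$ forces $X(e_2) = X'(e_2)$ after rescaling so that $X(e_1)=X'(e_1)$. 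Having defined $Y_D$, one verifies (DP3)$'$ by cases on $|\underline X \cap D|$: sizes $0$ and $2$ are trivial or by construction, size $1$ is impossible, and size $3$ is handled by expressing a scalar multiple of $X$ via (C3)$'$ as an elimination of two $F$-circuits $X', X''$ each meeting $\underline Y$ in two elements, then combining their already-known orthogonalities with $Y$. The strong case uses (C3) instead of (C3)$'$ to reduce the general intersection size to the size-two case by a modular family decomposition.

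The main obstacle is arranging for the modular pairs needed in the consistency argument and in the size-3 case of (DP3)$'$: one needs to find $F$-circuits with the correct support structure relative to $D$, establish that the relevant pairs are actually modular in $\underline M$ (which involves locating them among fundamental circuits with respect to a shared basis, by Lemma~\ref{lem:white}), and track the resulting signs and scalings under (C3)$'$. The classical proof that cocircuits form the circuits of the dual matroid uses special features of integer coefficients that are not available here, so everything must be recast in terms of modular pairs in $\underline M$ and the modular elimination axiom in $F$.
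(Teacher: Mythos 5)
Your overall architecture is the same as the paper's: build $\cD$ cocircuit-by-cocircuit, with ratios read off from $F$-circuits meeting the cocircuit $D$ in exactly two elements, and then verify (DP3)$'$/(DP3) by using modular elimination to reduce everything to two-element circuit--cocircuit intersections; your uniqueness argument is correct and matches the intended one. But two steps of your plan need repair. First, the consistency check you flag as ``the main obstacle'' is not merely a matter of bookkeeping: two circuits whose supports both meet $D$ in $\{e_1,e_2\}$ need \emph{not} form a modular pair, so ${\rm (C3)}'$ cannot be applied to them directly. The workable route (this is Claim 1 of Anderson--Delucchi, which the paper simply cites) is to note that any such circuit is the unique circuit in $A\cup\{e_1,e_2\}$ for some maximal independent subset $A$ of $E\setminus D$, and to pass between two choices $A,A'$ by single-element exchanges: when $|A\setminus A'|=1$ the two circuits lie in $A\cup A'\cup\{e_1,e_2\}$, a set of nullity $2$, so they \emph{are} modular by Lemma~\ref{lem:white}, and your elimination-at-$e_1$ computation then applies; transitivity over triples in $D$ is handled by fundamental circuits with respect to the shared basis $A\cup\{e_1\}$, again modular by Lemma~\ref{lem:white}. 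As written, your proposal leaves this unproved.

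Second, your size-$3$ (and general) step runs the elimination axiom backwards. ${\rm (C3)}'$ lets you eliminate \emph{from} a modular pair you already have; to ``express a scalar multiple of $X$ as an elimination of two circuits $X',X''$ each meeting $\underline Y$ in two elements'' you would need a modular pair, each meeting $\underline Y$ twice, whose unique elimination output is $\underline X$, and it is not clear such a configuration always exists. The paper applies elimination the other way: take a basis $B=I\cup\{z\}$ with $I\subseteq E\setminus\underline Y$ containing $\underline X\setminus\underline Y$, apply ${\rm (C3)}'$ (resp.\ (C3)) to $X$ together with suitably scaled fundamental circuits $X_i=C(B,e_i)$, and the output $Z$ automatically meets $\underline Y$ in exactly two elements; this always works and yields the same kind of three-way relation. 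Finally, be careful with ``combining their already-known orthogonalities'': in a general tract $N_G$ is not closed under addition, so you cannot add null relations (this is precisely the error discussed in \S\ref{sec:meaculpa}). The combination is legitimate here only because the orthogonality relations for circuits meeting $\underline Y$ twice are two-term, hence exact equalities by Lemma~\ref{lem:negatives}(a); substituting these into $X\cdot Y$ turns it into a $G$-multiple of the single elimination identity evaluated at the one coordinate $z$, which lies in $N_G$ by (T3). Your routing through two-element intersections is exactly what makes this possible, but the proposal should make that one-membership computation explicit.
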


\begin{proof}
Let $D$ be a cocircuit of $\underline{M}$.  As in the proof of \cite[Proposition 5.6]{AndersonDelucchi}, choose a maximal independent subset $A$ of $D^c$.  For $e,f \in D$, choose
$X_{D,e,f} \in \cC$ with support equal to the unique circuit $C_{D,e,f}$ of $\underline{M}$ with support contained in $A \cup \{ e,f \}$.  Define $\cD$ to be the collection of all
$W \in F^E$ with support some cocircuit $D$ such that 
\begin{equation}
\label{eq:Prop5.6}
\frac{W(e)}{W(f)} = -\frac{\noinvolution{X_{D,e,f}(f)}}{\noinvolution{X_{D,e,f}(e)}}
\end{equation}
for all $e,f \in D$.

By the proof of Claim 1 in \cite[Proof of Proposition 5.6]{AndersonDelucchi}, 
the set $\cD$ is well-defined and independent of the choice of $X_{D,e,f}$.

It remains to prove (DP3) (resp. (DP3)$'$). Let $X \in \cC$ and $Y \in \cD$, and if $M$ is a weak but not a strong $F$-matroid assume furthermore that $|\underline X \cap \underline Y| \leq 3$. If $\underline X \cap \underline Y$ is empty then we are done, so suppose that $\underline X \cap \underline Y$ is nonempty.
Since $\underline M$ is a matroid, $\underline X \cap \underline Y$ must contain at least two elements, so let $\underline X \cap \underline Y = \{z, e_1 \ldots e_k\}$ with $k \geq 1$. We may assume without loss of generality that $Y(z) = 1$. Let $I$ be a basis of $\underline M \backslash \underline Y$ including $\underline X \setminus \underline Y$. Then $B = I \cup \{z\}$ is a basis of $\underline M$. For $1 \leq i \leq k-1$ let $X_i \in \Ccal$ with $\underline X_i$ the fundamental circuit of $e_i$ with respect to $B$ and $X_i(e_i) = -X(e_i)$. Let $C$ be the fundamental circuit of $e_k$ with respect to $B$. 

We have $\underline X \setminus B \subseteq \{e_1, \ldots, e_k\}$, and for any $e \in \underline X \cap B$ the fundamental cocircuit of $e$ with respect to $B$ must meet $\underline X$ again, and must do so in some element of $\underline X \setminus B$. Thus $\underline X \subseteq C \cup \bigcup_{i = 1}^{k-1} \underline X_i$, which has height $k$ in the lattice of unions of circuits of $\underline M$. It follows that $X$ and the $X_i$ form a modular family (resp. a modular pair). So there is some $Z \in \cC$ with $Z(e_i) = 0$ for $1 \leq i \leq k-1$ and $-Z(f) + X(f) + \sum_{i = 1}^{k-1}X_i(f) \in N_G$ for any $f \in E$. Applying this with $f = e_k$ gives $Z(e_k) = X(e_k)$. For $1 \leq i \leq k-1$ we have $$\noinvolution{Y(e_i)} = \frac{\noinvolution{Y(e_i)}}{\noinvolution{Y(z)}} = -
\frac{X_i(z)}{X_i(e_i)},$$
so that $X_i(z) = -X_i(e_i)\noinvolution{Y(e_i)} = X(e_i)\noinvolution{Y(e_i)}$. Similarly, we have $Z(z) = -Z(e_k)\noinvolution{Y(e_k)} = -X(e_k)\noinvolution{Y(e_k)}$. This gives 
\[
\begin{aligned}
X \cdot Y &= X(e_k)Y(e_k) + X(z)Y(z) + \sum_{i = 1}^{k-1}X(e_i)Y(e_i) \\
&= -Z(z) + X(z) + \sum_{i = 1}^{k-1}X_i(z) \\
&\in N_G.
\end{aligned}
\]
Thus $X \perp Y$.
\end{proof}

\subsection{Cryptomorphic axiom systems for $F$-matroids}

We can finally prove the main theorems
from \S\ref{sec:MatroidsOverTracts}.
We begin by proving Theorems~\ref{thm:A} and \ref{thm:C} together in the following result:

\begin{theorem} \label{thm:maintheorem}
Let $E$ be a finite set.  There are natural bijections between the following three kinds of objects:
\begin{itemize}
\item[(C)] Collections $\cC \subset F^E$ satisfying {\rm (C0),(C1),(C2),(C3)}.
\item[(GP)] Equivalence classes of Grassmann-Pl{\"u}cker functions on $E$ satisfying {\rm (GP1),(GP2),(GP3)}.
\item[(DP)] Matroids $\underline{M}$ on $E$ together with a dual pair $(\cC,\cD)$ satisfying {\rm (DP1),(DP2),(DP3)}.
\end{itemize}
\end{theorem}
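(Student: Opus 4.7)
The plan is to prove the theorem by traversing the cycle (C) $\to$ (DP) $\to$ (GP) $\to$ (C), invoking the technical results already established in this section, and then verifying that composing the three arrows returns the starting data. Combined with the analogous (and simpler) weak analogue, this will simultaneously yield Theorems~\ref{thm:A} and~\ref{thm:C}.

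For (C) $\to$ (DP): starting from $\cC \subseteq F^E$ satisfying (C0)--(C3), observe that strong modular elimination implies the weak version (take $k=1$). Hence the collection of supports $\{\underline{X} : X \in \cC\}$ satisfies circuit elimination by Lemma~\ref{lem:Delucchi}, giving an underlying matroid $\underline{M}$ whose circuits are exactly these supports. Theorem~\ref{thm:Prop5.6} then produces a unique $F$-signature $\cD$ of $\underline{M}^*$ with $(\cC,\cD)$ a weak dual pair, and its strong assertion says that under the full (C3) the pair actually satisfies (DP3), not merely (DP3)$'$. For (DP) $\to$ (GP): this is precisely Theorem~\ref{thm:Prop4.6}, which from a dual pair for a matroid $\underline{M}$ constructs a Grassmann-Pl\"ucker function $\varphi$ (unique up to equivalence) with $\cC = \cC_\varphi$ and $\cD = \cD_\varphi$. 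For (GP) $\to$ (C): take $\cC_\varphi$ as in Definition~\ref{defn:Cphi}. Axioms (C0) and (C1) are immediate from the defining formula, (C2) follows from Corollary~\ref{cor:Cor4.4} (the explicit ratios determine each projective $F$-circuit from its support), and the strong modular elimination axiom (C3) is Theorem~\ref{thm:Prop5.3}.

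It remains to check that going all the way around the cycle returns the starting object, but this is automatic from the previous steps. Starting with $\cC$, the composition passes to $(\cC,\cD)$ via Theorem~\ref{thm:Prop5.6} and then to $\varphi$ via Theorem~\ref{thm:Prop4.6}; the very statement of the latter gives $\cC_\varphi = \cC$. Conversely, starting with a Grassmann-Pl\"ucker function $\varphi$, Lemma~\ref{lem:Prop4.3} gives a dual pair $(\cC_\varphi,\cD_\varphi)$, and the uniqueness half of Theorem~\ref{thm:Prop4.6} guarantees that the Grassmann-Pl\"ucker function reconstructed from this dual pair is equivalent to $\varphi$. The analogous traversal starting from a dual pair $(\cC,\cD)$ closes in the same way, using both that $\cC$ determines a unique dual $\cD$ (Theorem~\ref{thm:Prop5.6}) and that $\varphi$ determines $(\cC_\varphi,\cD_\varphi)$.

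The substantive obstacles have already been handled: establishing (C3) from (GP3) (Theorem~\ref{thm:Prop5.3}) required a careful choice of a Grassmann-Pl\"ucker relation calibrated against a modular family, and constructing $\varphi$ from a dual pair (Theorem~\ref{thm:Prop4.6}) required the most intricate argument, adapted from Anderson--Delucchi. With those in hand, the present theorem is essentially a bookkeeping exercise assembling the arrows. The only point that demands a little care is ensuring consistency between the strong and weak layers: the weak Grassmann-Pl\"ucker relations (GP3)$'$ suffice to build the dual pair and circuits of a weak $F$-matroid, while the strong relations (GP3) are needed to upgrade (DP3)$'$ to (DP3) and the weak modular elimination (C3)$'$ to the full (C3); this parallel structure is exactly what is reflected in the paired statements of Lemma~\ref{lem:Prop4.3}, Theorem~\ref{thm:Prop5.3}, Theorem~\ref{thm:Prop5.6}, and Theorem~\ref{thm:Prop4.6}.
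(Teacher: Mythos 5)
Your proposal is correct and follows essentially the same route as the paper, which assembles the bijections from the same three technical results (Theorem~\ref{thm:Prop5.3} for (GP)$\Rightarrow$(C), Theorem~\ref{thm:Prop5.6} for (C)$\Rightarrow$(DP), and Theorem~\ref{thm:Prop4.6} for (DP)$\Rightarrow$(GP)). The only difference is that you spell out explicitly why the cycle composes to the identity, a point the paper leaves implicit in the uniqueness clauses of those results.
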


\begin{proof}

(GP)$\Rightarrow$(C): If $\varphi$ is a Grassmann-Pl{\"u}cker function, Theorem~\ref{thm:Prop5.3} shows that the set $C_\varphi$ from Definition~\ref{defn:Cphi} satisfies (C0)-(C3).

\medskip

(C)$\Rightarrow$ (DP): If $\cC$ satisfies (C0)-(C3) and $M$ denotes the corresponding $F$-matroid, 
Theorem~\ref{thm:Prop5.6} shows that there is a unique signature $\cD$ of $\underline{M}^*$ such that $(\cC,\cD)$ is a dual pair of $F$-signatures of $\underline{M}$.

\medskip

(DP)$\Rightarrow$(GP): If $(\cC,\cD)$ is a dual pair of $F$-signatures of a rank $r$ matroid $\underline{M}$, Theorem~\ref{thm:Prop4.6} shows that there is a unique equivalence class of Grassmann-Pl{\"u}cker function $\varphi: E^r \to F$ such that $\cC = \cC_{\varphi}$ and $\cD = \cD_{\varphi}$.
\end{proof}

Similarly we have:
\begin{theorem} \label{thm:maintheorem'}
Let $E$ be a finite set.  There are natural bijections between the following three kinds of objects:
\begin{itemize}
\item[(C)] Collections $\cC \subset F^E$ satisfying {\rm (C0),(C1),(C2),(C3)}$'$.
\item[(GP)] Equivalence classes of Grassmann-Pl{\"u}cker functions on $E$ satisfying {\rm (GP1),(GP2),(GP3)}$'$.
\item[(DP)] Matroids $\underline{M}$ on $E$ together with a dual pair $(\cC,\cD)$ satisfying {\rm (DP1),(DP2),(DP3)}$'$.
\end{itemize}
\end{theorem}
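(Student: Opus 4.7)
The plan is to reproduce the three-step cycle used in the proof of Theorem~\ref{thm:maintheorem}, but substituting each strong ingredient with its weak counterpart, all of which have already been proved in the excerpt. No new calculations are needed, only the observation that every lemma we relied on in the strong case has a weak analog available.

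First, for (GP)$\Rightarrow$(C): given a weak Grassmann-Pl\"ucker function $\varphi$, form $\cC_\varphi$ as in Definition~\ref{defn:Cphi}. Axiom (C0) holds by construction and (C1) is immediate from the scaling-homogeneity of formula~(2) in that definition. For (C2), note that since ${\rm supp}(\cC_\varphi)$ consists of the circuits of the matroid $\underline M_\varphi$, two elements of $\cC_\varphi$ with comparable supports have equal supports equal to some circuit $C$, and Lemma~\ref{lem:blergh} combined with the defining ratios in Definition~\ref{defn:Cphi} shows any two such elements differ by a nonzero scalar. Finally, (C3)$'$ is exactly the weak half of Theorem~\ref{thm:Prop5.3}.

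Second, for (C)$\Rightarrow$(DP): starting from a collection $\cC$ satisfying (C0)--(C2) and (C3)$'$, which by definition is the set of $F$-circuits of a weak $F$-matroid $M$, Theorem~\ref{thm:Prop5.6} produces the unique $F$-signature $\cD$ of $\underline M^*$ such that $(\cC,\cD)$ is a weak dual pair, giving (DP1), (DP2), and (DP3)$'$ at once. Third, for (DP)$\Rightarrow$(GP): given a weak dual pair $(\cC,\cD)$ of $F$-signatures of a rank $r$ matroid, Theorem~\ref{thm:Prop4.6} furnishes a weak Grassmann-Pl\"ucker function $\varphi$, unique up to equivalence, with $\cC=\cC_\varphi$ and $\cD=\cD_\varphi$, so (GP1), (GP2), and (GP3)$'$ hold.

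To conclude the cycle is compatible, I would check that going (GP)$\to$(C)$\to$(DP)$\to$(GP) returns the same equivalence class of $\varphi$: indeed, by Lemma~\ref{lem:Prop4.3} the weak dual pair produced in the middle step is precisely $(\cC_\varphi,\cD_\varphi)$, and then the uniqueness clause of Theorem~\ref{thm:Prop4.6} identifies the Grassmann-Pl\"ucker function it yields with $\varphi$ up to equivalence. The remaining round-trips follow likewise from uniqueness: (C)$\to$(DP)$\to$(GP)$\to$(C) returns the original $\cC$ because the $\cC$-component of a weak dual pair reconstructed from its Grassmann-Pl\"ucker function is $\cC_\varphi$ again, and (DP)$\to$(GP)$\to$(C)$\to$(DP) returns the original dual pair by uniqueness of $\cD$ in Theorem~\ref{thm:Prop5.6}.

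I do not anticipate a genuine obstacle: all of the technical work — in particular the subtle modular-family step in Theorem~\ref{thm:Prop5.6} and the verification of (GP3)$'$ from (DP3)$'$ in Theorem~\ref{thm:Prop4.6} — was already carried out in the earlier sections under the hypothesis $|\underline X\cap\underline Y|\le 3$, which is precisely the weak framework. The only thing to be careful about is to consistently invoke the weak versions of the three key theorems rather than their strong counterparts, and in particular to note that the weak modular elimination axiom (C3)$'$ is the minimum needed for Theorem~\ref{thm:Prop5.6} to yield a weak (rather than strong) dual pair.
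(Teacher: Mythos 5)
Your proposal is correct and follows exactly the route the paper intends: the paper proves Theorem~\ref{thm:maintheorem} via the cycle Theorem~\ref{thm:Prop5.3} $\Rightarrow$ Theorem~\ref{thm:Prop5.6} $\Rightarrow$ Theorem~\ref{thm:Prop4.6} and then states Theorem~\ref{thm:maintheorem'} with ``Similarly we have,'' i.e.\ by invoking the weak halves of those same results, which is precisely what you do. Your explicit verification of (C0)--(C2) and of the round-trip compatibilities via Lemma~\ref{lem:Prop4.3} and the uniqueness clauses is a slightly more careful write-up of the same argument, not a different one.
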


\subsection{Duality for $F$-matroids}

In this section, we prove Theorems~\ref{thm:B} and \ref{thm:D}.  We begin with the following preliminary result:

\begin{lemma}
\label{lem:Prop5.8}
Let $\cC \subseteq F^E$ be the set of $F$-circuits of a (weak) $F$-matroid $M$.  Then the set of elements of $\cC^\perp \backslash \{ 0 \}$ of minimal 
non-empty support is exactly the signature $\cD$ of $\underline{M}^*$ given by Theorem~\ref{thm:Prop5.6}.
\end{lemma}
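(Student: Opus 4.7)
The plan is to prove the two inclusions $\cD \subseteq \mathrm{SuppMin}(\cC^\perp\setminus\{0\})$ and $\mathrm{SuppMin}(\cC^\perp\setminus\{0\}) \subseteq \cD$.

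For the first inclusion, I would invoke Theorem~\ref{thm:Prop5.6} to get that $(\cC,\cD)$ is a (weak) dual pair, so (in the strong case) $\cD \subseteq \cC^\perp$ via (DP3); each $W \in \cD$ satisfies $\underline W$ is a cocircuit of $\underline M$, hence has minimal support among elements of $\cC^\perp\setminus\{0\}$ once the second inclusion is established. (In the weak case the direct orthogonality requires care, but the minimality part is formally identical.)

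For the harder second inclusion, suppose $Y \in \cC^\perp\setminus\{0\}$ has minimal non-empty support. The first step is to show that $\underline Y$ contains a cocircuit of $\underline M$. For this, I observe that for every $X \in \cC$ we have $X \cdot Y \in N_G$, and if $|\underline X \cap \underline Y| = 1$ this inner product would reduce to a single non-zero summand, contradicting Lemma~\ref{lem:negatives}(c). So $\underline Y$ meets every circuit of $\underline M$ in $0$ or $\geq 2$ elements. Standard matroid theory now applies: if $\underline Y$ contained no cocircuit, then $E\setminus\underline Y$ would be spanning in $\underline M$, so for any $e\in\underline Y$ a basis $B\subseteq E\setminus\underline Y$ would give a fundamental circuit $C(B,e)$ with $|C(B,e)\cap\underline Y|=1$, a contradiction. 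Hence $\underline Y$ contains some cocircuit $D$. Applying Theorem~\ref{thm:Prop5.6} (in the strong case, where $\cD\subseteq\cC^\perp$) we pick $W\in\cD$ with $\underline W = D \subseteq \underline Y$; minimality forces $\underline Y = D = \underline W$.

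The final step is to show $Y$ is a scalar multiple of $W$. Fix $f\in D$ and set $\alpha := Y(f)/W(f)$. For any other $e\in D$, use the $F$-circuit $X_{D,e,f}\in\cC$ from the proof of Theorem~\ref{thm:Prop5.6}, whose support lies in $A\cup\{e,f\}$ for a maximal $\underline M$-independent $A\subseteq E\setminus D$; then $\underline{X_{D,e,f}}\cap\underline Y = \{e,f\}$, so the orthogonality $X_{D,e,f}\cdot Y\in N_G$ reduces to a two-term relation, and Lemma~\ref{lem:negatives}(a) yields
\[
\frac{Y(e)}{Y(f)} = -\frac{X_{D,e,f}(f)}{X_{D,e,f}(e)}.
\]
This is precisely formula~\eqref{eq:Prop5.6} defining $\cD$, which gives the same ratio for $W$; hence $Y(e)=\alpha W(e)$ for every $e\in D$, so $Y=\alpha W\in\cD$. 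The main obstacle is Step 1 (producing a cocircuit inside $\underline Y$); the fundamental-circuit trick against a basis of the putative spanning set $E\setminus\underline Y$ makes this clean, and everything else is a calculation dictated by the construction of $\cD$.
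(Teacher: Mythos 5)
Your argument for the strong case is correct and is essentially the Anderson--Delucchi argument that the paper simply cites: Step 1 (orthogonality forbids a circuit meeting $\underline Y$ in exactly one element, so $E \setminus \underline Y$ cannot be spanning and $\underline Y$ contains a cocircuit $D$) and Step 3 (the two-term relations against the circuits $X_{D,e,f}$, via Lemma~\ref{lem:negatives}(a), pin down all ratios $Y(e)/Y(f)$ and force $Y$ to be a scalar multiple of the element of $\cD$ supported on $D$) are exactly the right computations. The slight circularity in your first inclusion is harmless, since what it really needs is only Step 1 of your second inclusion, which does not use support-minimality.

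The genuine gap is the weak case, which the lemma as stated also covers and which you dismiss with the parenthetical ``in the weak case the direct orthogonality requires care, but the minimality part is formally identical.'' It is not formally identical: both places where you invoke strongness are essential. For a weak $F$-matroid, Theorem~\ref{thm:Prop5.6} only provides the weak dual pair property ${\rm (DP3)}'$, so you get neither $\cD \subseteq \cC^\perp$ (needed for your first inclusion) nor $W \in \cC^\perp$ for the element $W \in \cD$ with $\underline W = D \subseteq \underline Y$ (needed to force $\underline Y = D$ from minimality). Nor can this be repaired by a more careful run of the same argument: if every element of $\cD$ were orthogonal to every element of $\cC$, then $(\cC,\cD)$ would satisfy {\rm (DP3)} and Theorem~\ref{thm:C} would force $\cC$ to satisfy {\rm (C3)}, i.e.\ $M$ would be strong; so for a weak-but-not-strong matroid (such as Example~\ref{eg:danscex} over ${\mathbb P}$) full circuit--cocircuit orthogonality genuinely fails. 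As written, your proposal therefore establishes the lemma only for strong $F$-matroids; the weak case requires either reinterpreting the orthogonality being used (e.g.\ testing only against circuits meeting $Y$ in at most three elements, in the spirit of ${\rm (DP3)}'$) or an argument of a different kind, and you should at least flag explicitly that your method does not extend.
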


\begin{proof}
This is proved exactly like \cite[Proof of Proposition 5.8]{AndersonDelucchi}.
\end{proof}

\begin{proof}[Proof of Theorem~\ref{thm:B}:]
This follows from Theorem~\ref{thm:maintheorem}, Lemma~\ref{lem:Lemma3.2}, and Proposition~\ref{lem:Prop4.3} and \ref{lem:Prop5.8}, exactly as in \cite[Proof of Theorem B]{AndersonDelucchi}.
\end{proof}

\begin{proof}[Proof of Theorem~\ref{thm:D}:]
(cf.~\cite[Proof of Theorem D]{AndersonDelucchi}) This follows from Theorem~\ref{thm:maintheorem} and Lemmas~\ref{lem:MinorLemma} and \ref{lem:Prop4.3}.
\end{proof}

\subsection{Proof of Theorem~\ref{thm:C3primeprime}} \label{sec:C3primeprime}

{ 
In this section, we prove Theorem~\ref{thm:C3primeprime}.

\begin{proof}[Proof of Theorem~\ref{thm:C3primeprime}:]
Suppose first that (C3) holds.  Then in particular ${\rm (C3)}'$ holds, ${\mathcal C}$ is the set of $F$-circuits of a weak $F$-matroid $M$, and
the support of ${\mathcal C}$ is the set of circuits of the underlying matroid $\underline{M}$.
Let $X \in {\mathcal C}$ and let $B$ be a basis of $\underline{M}$.  Write $\underline{X} \backslash B = \{ e_1,\ldots,e_k \}$ and set $e = e_1$.
For $1 \leq i \leq k$ let $X_i = X(e) X_{B,e_i}$.
One checks easily that $X_2,\ldots,X_k$ and $-X$ satisfy the hypotheses of (C3), and thus 
there is an $F$-circuit $Z$ such that $Z(e_i) = 0$ for $1 \leq i \leq k$ and $-Z(f) -X(f) + X_2(f) + \cdots + X_k(f) \in N_G$
for every $f \in E$.

We must have $Z(e) \neq 0$ or else $\underline{Z} \subseteq B$, which is impossible.
As $f \in B$ for all other $f \in \underline{Z}$ and $Z(e) = -X(e)$, we must have $Z = -X(e)X_{B,e} = -X_1$.
Thus $-X(f) + X_1(f) + X_2(f) + \cdots + X_k(f) \in N_G$ for all $f \in E$, establishing ${\rm (C3)}''$.

Now assume that ${\rm (C3)}''$ holds.  
Suppose we have a modular family $X, X_1, \ldots X_k$ and elements $e_1 \ldots e_k \in E$ as in ${\rm(C3)}$.
As in the proof of Theorem~\ref{thm:Prop5.3}, if $z$ is any element of $\underline X \setminus \bigcup_{i = 1}^k \underline X_i$, $A = \underline X \cup \bigcup_{i = 1}^k \underline X_i$, and $N = \underline M |A$, then
$I = A \setminus \{z, e_1, \ldots e_k\}$ is a basis of $N$.
Let $J$ be a basis of $M/A$.  Then $B = I \dot \cup J$ is a basis of $M$, 
and $X_i = -X(e_i) X_{B,e_i}$ for all $i=1,\ldots,k$.

Let $Z \in \cC$ with $\underline Z$ given by the fundamental circuit of $z$ with respect to $I$ and with $Z(z) = X(z)$. It is clear that $Z(e_i) = 0$ for $1 \leq i \leq k$, and it follows by inspection that $-Z(f) +X(f) + \sum_{i=1}^k X_i(f) \in N_G$ for all $f \in E$, establishing (C3).
\end{proof}
}

\subsection{Strong and weak matroids coincide over perfect tracts}\label{sec:perfectproof}
In this section, we prove Theorem \ref{thm:perfect}.
We will need to consider, for each natural number $k$, the following weakening of (DP3):
\begin{itemize}
\item[(DP3)$_{k}$] $X \perp Y$ for every pair $X \in \Ccal$ and $Y \in \Dcal$ with $|\underline X \cap \underline Y| \leq k$.
\end{itemize}
So (DP3)$_3$ is just ${\rm (DP3)}'$, and (DP3) is equivalent to the conjunction of all the (DP3)$_k$.

\begin{proof}[Proof of Theorem \ref{thm:perfect}]
We will show by induction on $k$ that any weak $F$-matroid satisfies (DP3)$_k$ for all $k \geq 3$. The base case $k = 3$ is true by definition. So let $k > 3$ and suppose that every weak $F$-matroid satisfies (DP3)$_{k-1}$. Let $M$ be a weak $F$-matroid, and choose $X \in \Ccal$ and $Y \in \Dcal$ with $|\underline X \cap \underline Y| \leq k$. We must show that $X \perp Y$. This follows from (DP3)$_{k-1}$ if $|\underline X \cap \underline Y| \leq k-1$, so we may suppose that $|\underline X \cap \underline Y| = k$.

By contracting $\underline X \setminus \underline Y$ and deleting $\underline Y \setminus \underline X$ if necessary\footnote{These operations were introduced in Subsection \ref{sec:minors}.}, we may assume without loss of generality that $\underline X = \underline Y$. By contracting a basis of 
{$\underline{M}/\underline X$} if necessary, 
we may assume without loss of generality that $\underline X$ is spanning in {$\underline{M}$}. Similarly we may assume without loss of generality that $\underline Y$ is cospanning in {$\underline{M}$}. So the rank and the corank of $M$ are both $k-1$, which means that $M$ has $2k-2$ elements. So $M$ has at least $k-2 \geq 2$ elements outside $\underline X$. None of these elements can be coloops (since $\underline X$ is spanning) or loops (since it is cospanning). Let $N$ be a minor of $M$ with ground set $\underline X$, in which at least one of the edges outside $\underline X$ has been contracted and at least one has been deleted. This ensures that $\underline X$ is not a circuit of $\underline N$, and dually it also ensures that $\underline X$ is not a cocircuit of $\underline N$.

For any cocircuit $W$ of $N$ there is some cocircuit $\hat W$ of $M$ with $W = \hat W \restric_{\underline X}$. Then $|\underline X \cap \underline{\hat W}| = |\underline W| \leq k-1$, so $X \perp \hat W$, from which it follows that $X \restric_{\underline X} \perp W$. So $X \restric_{\underline X}$ is a vector of $N$. Similarly $Y\restric_{\underline X}$ is a covector of $N$. Any intersection of a circuit with a cocircuit of $\underline N$ has at most $k-1$ elements. Since $N$ is a weak $F$-matroid, it satisfies (DP3)$_{k-1}$ by the induction hypothesis, and so it is in fact a strong $F$-matroid. Since $F$ is perfect, it follows that $X \restric_{\underline X} \perp Y \restric_{\underline X}$ and so $X \perp Y$, as required.

We have now shown that any weak matroid over $F$ satisfies (DP3)$_k$ for all $k$, and so is strong.
\end{proof}

\appendix

\section{Errata to \cite{AndersonDelucchi}}
\label{sec:errata}

Since we rely so heavily in this paper on \cite{AndersonDelucchi}, we include the following list of errata.

\medskip

Most of the errors in \cite{AndersonDelucchi} are minor and localized, but there is one major problem which affects the paper globally.
(A similar error is present in the arXiv versions 1 through 3 of the present paper.)
The difficulty is in the third paragraph of the proof of Claim 3 on page 831.
The authors write that if $X$ is not orthogonal to $W$ then neither is $X'$. But in order for that conclusion to follow, 
one would need to know that $X'$ agrees with $X$ on the domain of $X'$.  However, there is no reason to expect this to hold.
Indeed, as Example~\ref{eg:danscex} shows, Theorem A in \cite{AndersonDelucchi} does not hold. 
\medskip

In addition, we mention the following less serious mistakes:

\begin{enumerate}
\item In Definition 2.4, there should be an additional axiom that the zero vector is not a phased circuit.  And axiom (C1) should say ${\rm supp}(X) \subseteq {\rm supp}(Y)$ rather than ${\rm supp}(X) = {\rm supp}(Y)$.
\item In the proof of Lemma 3.2, $E \backslash (X \cap Y)$ should be $E \backslash (X \cup Y)$.
\item In the first bulleted point of \S{4.2} (top of page 822), $b_0$ should be $b_1$.
\item In the statement of Lemma 5.2, $X(e)=Y(e)$ should be $X(e)=-Y(e)$ and ${\mathcal C}$ should be ${\mathcal C}_\varphi$.   Note that Lemma 5.2 is not actually used in any of the subsequent arguments.
\item In the statements of Proposition 5.3 and Lemma 5.4, the hypothesis $X(f) \neq Y(f)$ should be replaced with $X(f) \neq -Y(f)$.  And in the third line from the end of the proof of Lemma 5.4, $X(f) \neq Y(f) = Y'(f)$ should be $-X(f) \neq Y(f) = Y'(f)$. 
\item In Lemmas 4.5 and Proposition 5.6, the correct hypotheses are that ${\mathcal C}$ and ${\mathcal D}$ form a dual pair of circuit signatures for some matroid $M$.  This is all that is used in the proofs, and if one makes the stronger assumption in Proposition 5.6 that ${\mathcal C},{\mathcal D}$ are the phased circuits (resp.~cocircuits)
of a phased matroid then the proof of Corollary 5.7 is incomplete. 
\item In the second line of the proof of Proposition 5.3, the authors refer to the cocircuits of the phased matroid defined by $\varphi$, but one doesn't actually know at this point in their chain of reasoning that the modular elimination axiom holds for what eventually ends up being the phased matroid defined by $\varphi$.  Their proof is nevertheless correct.
\end{enumerate}

\begin{remark}
In Definition 2.4, the authors write $Z(g) \leq \max\{X(g),Y(g)\}$ in the ``else'' case, but this inequality can be replaced with equality; this follows from the ``symmetric difference'' part of \cite[Lemma 2.7.1]{WhiteCG}.
The latter result also implies that axiom (ME) in Definition 2.4 (and also in Proposition A.21) can be replaced with a stronger axiom in which one asks for a {\bf unique} $Z \in {\mathcal C}$ with the stated properties.
\end{remark}

\section{Fuzzy rings simplified (written by Oliver Lorscheid)}
\label{sec:Lorscheid}

In this appendix, we show that every fuzzy ring is weakly isomorphic to a fuzzy ring of a particularly simple form. To be more precise, we describe a full subcategory $\FuzzGrRings$ of the category $\Fuzz$ of fuzzy rings together with weak morphisms that is equivalent to $\Fuzz$.

The objects of $\FuzzGrRings$ are defined as the fuzzy rings $(K;+;\cdot;\epsilon;K_0)$ for which the triple $(K,+,\cdot)$ is a semiring that is isomorphic to the group semiring $\N[G]$ of an abelian group $G$.

Note that if $(K,+,\cdot)$ is a group semiring, then the axioms (FR0), (FR1), (FR2) and (FR7) of a fuzzy ring are automatically satisfied and axiom (FR3) is equivalent to the fact that $\epsilon\in G$. Note further that $G=K^\ast$. 

Thus $\FuzzGrRings$ consists of quintuples $(K;+;\cdot;\epsilon;K_0)$ for which $(K,+,\cdot)$ is a commutative semiring equal to $\N[K^\ast]$ and such that $\epsilon\in K^\ast$ and $K_0\subseteq K$ satisfy the following axioms:
\begin{enumerate}
 \item[(FR4)] $K_0$ is a proper semiring ideal, i.e.\ $K_0+K_0\subseteq K_0$, $K\cdot K_0\subseteq K_0$, $0\in K_0$ and $1\notin K_0$.
 \item[(FR5)] For $\alpha\in K^\ast$, we have $1+\alpha\in K_0$ if and only if $\alpha=\epsilon$.
 \item[(FR6)] If $x_1,x_2,y_1,y_2\in K$ and $x_1+y_1,x_2+y_2\in K_0$, then $x_1\cdot x_2+\epsilon \cdot y_1\cdot y_2\in K_0$.
\end{enumerate}

\begin{proposition}
 The inclusion functor $\FuzzGrRings\to\Fuzz$ is an equivalence of categories. In particular, a fuzzy ring $(K;+;\cdot;\epsilon;K_0)$ is weakly isomorphic to the fuzzy ring $(K';+;\cdot;\epsilon';K_0')$ that is defined as follows: 
 \begin{itemize}
  \item $(K',+,\cdot)=\N[K^\ast]$ as semirings;
  \item $\epsilon'=1\cdot \epsilon$, considered as an element of $K^\ast\subseteq\N[K^\ast]=K'$;
  \item $K_0'=\bigl\{ \,\sum n_xx \in\N[K^\ast] \, \bigl| \,\sum n_xx \in K_0\text{ as an element of }K\, \bigr\}$.
 \end{itemize}
\end{proposition}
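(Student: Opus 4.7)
The plan is to produce, for every fuzzy ring $K = (K; +; \cdot; \epsilon; K_0) \in \Fuzz$, the explicit object $K' \in \FuzzGrRings$ described in the second half of the statement, and exhibit a weak isomorphism $K \cong K'$; essential surjectivity of $\FuzzGrRings \hookrightarrow \Fuzz$ follows, and fullness/faithfulness then come from the same bookkeeping, because a weak morphism is determined by its restriction to units together with its effect on the null ideal. Concretely, I would set $K' = \N[K^\ast]$ with its group-semiring structure, $\epsilon' = \epsilon \in K^\ast \subseteq K'$, and introduce the canonical semiring homomorphism $\pi \colon K' \to K$ which evaluates a formal sum $\sum n_x x$ in $K$; this $\pi$ is well-defined thanks to (FR0)--(FR2), restricts to the identity on $K^\ast$, and in particular identifies $(K')^\ast$ with $K^\ast$. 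By construction $K_0' = \pi^{-1}(K_0)$.

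Next I would verify that $K'$ satisfies (FR4)--(FR6), which is the standard check that these axioms pull back along a semiring surjection that restricts to an isomorphism on units. For (FR4), $K_0'$ is the preimage of a semiring ideal so is itself a semiring ideal, and $1 \notin K_0'$ because $\pi(1) = 1 \notin K_0$. For (FR5), an element $\alpha \in (K')^\ast = K^\ast$ satisfies $1 + \alpha \in K_0'$ iff $1 + \alpha \in K_0$, iff $\alpha = \epsilon = \epsilon'$. For (FR6), if $x_1 + y_1, x_2 + y_2 \in K_0'$ then $\pi(x_1) + \pi(y_1), \pi(x_2) + \pi(y_2) \in K_0$, and (FR6) in $K$ together with the fact that $\pi$ is multiplicative and sends $\epsilon'$ to $\epsilon$ yields $x_1 x_2 + \epsilon' y_1 y_2 \in K_0'$.

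To produce the weak isomorphism, I would use $\pi \colon K' \to K$ and a map $\iota \colon K \to K'$ in the opposite direction, each given by the identity $K^\ast \to K^\ast = (K')^\ast$ on units, and verify they are mutually inverse as weak morphisms. Here I would invoke the fact that a weak morphism of fuzzy rings $f \colon K_1 \to K_2$ is, up to equivalence, the data of a monoid map $K_1^\ast \cup \{0\} \to K_2$ sending $\epsilon_1 \mapsto \epsilon_2$, together with the condition that formal $\N$-linear combinations of units lying in $(K_1)_0$ are sent to elements of $(K_2)_0$. The compatibility $\pi(K_0') \subseteq K_0$ is tautological, $\iota(K_0) \subseteq K_0'$ is the very definition of $K_0'$, and both compositions agree with the identity on $K^\ast \cup \{0\}$, hence are identities as weak morphisms.

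The principal obstacle is pinning down the precise notion of weak morphism so that this description is accurate; in particular, one needs that weak morphisms really are insensitive to the specific way a non-unit element of the codomain is expressed as a sum of units, so that $\iota$ is well-defined. Once this is in hand, the construction $K \mapsto K'$ is functorial, fullness and faithfulness follow because the multiplicative group together with compatibility with the null set fully determines a weak morphism on either side, and combining essential surjectivity with fullness and faithfulness gives the equivalence of categories $\FuzzGrRings \simeq \Fuzz$.
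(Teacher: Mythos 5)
Your proposal is correct and follows essentially the same route as the paper's proof: construct $K'=\N[K^\ast]$ with $K_0'$ the preimage of $K_0$ under evaluation, note that (FR4)--(FR6) transfer directly from $K$, and check that the identity map on units gives mutually inverse weak morphisms in the two directions. The ``principal obstacle'' you flag about $\iota$ being well-defined dissolves under the actual Dress--Wenzel definition, since a weak morphism is specified only on units (and $0$) with the null-set condition imposed on $\N$-linear combinations of units, exactly as in the description you invoke and as the paper uses.
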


\begin{proof}
 It is clear that the inclusion functor $\FuzzGrRings\to\Fuzz$ is fully faithful. Thus it suffices to show that this functor is essentially surjective. This follows from the latter claim of the proposition.
 
 To begin with, we reason that the quintuple $(K';+;\cdot;\epsilon';K_0')$ is indeed a fuzzy ring. As observed before, it is enough to verify axioms (FR4), (FR5) and (FR6). Axioms (FR4) and (FR6) follow immediately from the corresponding properties for $K$. Axiom (FR5) follows from the corresponding property for $K$ and the fact that $(K')^\ast=(\N[K^\ast])^\ast=K^\ast$. 
 
 In what follows, we show that the identity map $f:(K')^\ast\to K^\ast$, with respect to the identification $(K')^\ast=K^\ast$, defines a weak isomorphism $K'\to K$ of fuzzy rings. 
 
 To begin with, we verify that $f$ is a weak morphism. If $\sum n_x x \in K_0'$, then $\sum n_x x\in K_0$ by the very definition of $K_0'$. Thus $f$ is a weak morphism.
 
 We continue with the verification that the identity map $g:K^\ast\to (K')^\ast$ defines a weak morphism $K\to K'$. Consider a sum $\sum x_i$ of elements $x_1,\dotsc,x_n\in K^\ast$ that is contained in $K_0$. The corresponding element of $K'$ is $\sum n_x x$, where $n_x$ equals the number of indices $i$ between $1$ and $n$ for which $x_i=x$. Again by the definition of $K_0'$, this is an element of $K_0'$. This shows that $g$ defines a weak morphism $K\to K'$.
 
 Since $f$ and $g$ are mutual inverse maps, the corresponding weak morphisms between $K$ and $K'$ are mutual inverse weak isomorphisms, which completes the proof of the proposition.
\end{proof}


\bibliographystyle{alpha}
\bibliography{Hyperfield}

\newcommand{\etalchar}[1]{$^{#1}$}
\def\cprime{$'$} \def\cprime{$'$} \def\cprime{$'$}
\begin{thebibliography}{BLVS{\etalchar{+}}99}

\bibitem[AD12]{AndersonDelucchi}
Laura Anderson and Emanuele Delucchi.
\newblock Foundations for a theory of complex matroids.
\newblock {\em Discrete Comput. Geom.}, 48(4):807--846, 2012.

\bibitem[And19]{AndersonVectors}
Laura Anderson.
\newblock Vectors of matroids over tracts.
\newblock {\em J. Combin. Theory Ser. A}, 161:236--270, 2019.

\bibitem[BB17]{BakerBowlerHyperfield}
Matthew Baker and Nathan Bowler.
\newblock Matroids over hyperfields.
\newblock Preprint. Available at {\tt arxiv:math.CO/1601.01204}, 31 pages,
  2017.

\bibitem[Ber90]{BerkovichBook}
Vladimir~G. Berkovich.
\newblock {\em Spectral theory and analytic geometry over non-{Archimedean}
  fields}, volume~33 of {\em Mathematical Surveys and Monographs}.
\newblock American Mathematical Society, Providence, RI, 1990.

\bibitem[BJ87]{BlandJensen}
Robert~G. Bland and David~L. Jensen.
\newblock Weakly oriented matroids.
\newblock Cornell University School of OR/IE Technical Report No. 732, 1987.

\bibitem[BLV78]{BlandLasVergnas}
Robert~G. Bland and Michel Las~Vergnas.
\newblock Orientability of matroids.
\newblock {\em J. Combinatorial Theory Ser. B.}, 24(1):94--123, 1978.

\bibitem[BLVS{\etalchar{+}}99]{OM}
Anders Bj{\"o}rner, Michel Las~Vergnas, Bernd Sturmfels, Neil White, and
  G{\"u}nter~M. Ziegler.
\newblock {\em Oriented matroids}, volume~46 of {\em Encyclopedia of
  Mathematics and its Applications}.
\newblock Cambridge University Press, Cambridge, second edition, 1999.

\bibitem[BPR06]{BasuPollackRoy}
Saugata Basu, Richard Pollack, and Marie-Fran{\c{c}}oise Roy.
\newblock {\em Algorithms in real algebraic geometry}, volume~10 of {\em
  Algorithms and Computation in Mathematics}.
\newblock Springer-Verlag, Berlin, second edition, 2006.

\bibitem[CC10]{ConnesConsaniAbsolute}
Alain Connes and Caterina Consani.
\newblock From monoids to hyperstructures: in search of an absolute arithmetic.
\newblock In {\em Casimir force, {C}asimir operators and the {R}iemann
  hypothesis}, pages 147--198. Walter de Gruyter, Berlin, 2010.

\bibitem[CC11]{ConnesConsani}
Alain Connes and Caterina Consani.
\newblock The hyperring of ad\`ele classes.
\newblock {\em J. Number Theory}, 131(2):159--194, 2011.

\bibitem[Del11]{Delucchi}
Emanuele Delucchi.
\newblock Modular elimination in matroids and oriented matroids.
\newblock {\em European J. Combin.}, 32(3):339--343, 2011.

\bibitem[Dre86]{Dress}
Andreas W.~M. Dress.
\newblock Duality theory for finite and infinite matroids with coefficients.
\newblock {\em Adv. in Math.}, 59(2):97--123, 1986.

\bibitem[DW91]{DressWenzelGP}
Andreas W.~M. Dress and Walter Wenzel.
\newblock Grassmann-{P}l\"ucker relations and matroids with coefficients.
\newblock {\em Adv. Math.}, 86(1):68--110, 1991.

\bibitem[DW92a]{DressWenzelVM}
Andreas W.~M. Dress and Walter Wenzel.
\newblock Valuated matroids.
\newblock {\em Adv. Math.}, 93(2):214--250, 1992.

\bibitem[DW92b]{DressWenzelPM}
Andreas~W.M. Dress and Walter Wenzel.
\newblock Perfect matroids.
\newblock {\em Advances in Mathematics}, 91(2):158 -- 208, 1992.

\bibitem[FM16]{FinkMoci}
Alex Fink and Luca Moci.
\newblock Matroids over a ring.
\newblock {\em J. Eur. Math. Soc. (JEMS)}, 18(4):681--731, 2016.

\bibitem[Fre13]{Frenk}
Bart Frenk.
\newblock Tropical varieties, maps, and gossip.
\newblock Ph.D. thesis. Available at {\tt
  http://alexandria.tue.nl/extra2/750815.pdf}, 167 pages, 2013.

\bibitem[GG18]{GiansiracusaGrassmann}
Jeffrey Giansiracusa and Noah Giansiracusa.
\newblock A {G}rassmann algebra for matroids.
\newblock {\em Manuscripta Math.}, 156(1-2):187--213, 2018.

\bibitem[IR10]{IRsupertropicalalgebra}
Zur Izhakian and Louis Rowen.
\newblock Supertropical algebra.
\newblock {\em Adv. Math.}, 225(4):2222--2286, 2010.

\bibitem[IR11]{IRmatrixalgebra}
Zur Izhakian and Louis Rowen.
\newblock Supertropical matrix algebra.
\newblock {\em Israel J. Math.}, 182:383--424, 2011.

\bibitem[Jun18]{JunHyperringScheme}
Jaiung Jun.
\newblock Algebraic geometry over hyperrings.
\newblock {\em Adv. Math.}, 323:142--192, 2018.

\bibitem[KL72]{KleimanLaksov}
Steven~L. Kleiman and Dan Laksov.
\newblock Schubert calculus.
\newblock {\em Amer. Math. Monthly}, 79:1061--1082, 1972.

\bibitem[Mar96]{MarshallBook}
Murray~A. Marshall.
\newblock {\em Spaces of orderings and abstract real spectra}, volume 1636 of
  {\em Lecture Notes in Mathematics}.
\newblock Springer-Verlag, Berlin, 1996.

\bibitem[Mar06]{Marshall}
Murray~A. Marshall.
\newblock Real reduced multirings and multifields.
\newblock {\em J. Pure Appl. Algebra}, 205(2):452--468, 2006.

\bibitem[Mas85]{Massouros}
Ch.~G. Massouros.
\newblock Methods of constructing hyperfields.
\newblock {\em Internat. J. Math. Math. Sci.}, 8(4):725--728, 1985.

\bibitem[MS15]{MaclaganSturmfels}
Diane Maclagan and Bernd Sturmfels.
\newblock {\em Introduction to tropical geometry}, volume 161 of {\em Graduate
  Studies in Mathematics}.
\newblock American Mathematical Society, Providence, RI, 2015.

\bibitem[MT01]{MurotaTamura}
Kazuo Murota and Akihisa Tamura.
\newblock On circuit valuation of matroids.
\newblock {\em Adv. in Appl. Math.}, 26(3):192--225, 2001.

\bibitem[Oxl92]{Oxley}
James~G. Oxley.
\newblock {\em Matroid theory}.
\newblock Oxford Science Publications. The Clarendon Press, Oxford University
  Press, New York, 1992.

\bibitem[PvZ10]{PvZLifts}
R.~A. Pendavingh and S.~H.~M. van Zwam.
\newblock Lifts of matroid representations over partial fields.
\newblock {\em J. Combin. Theory Ser. B}, 100(1):36--67, 2010.

\bibitem[PvZ13]{PvZSkew}
R.~A. Pendavingh and S.~H.~M. van Zwam.
\newblock Skew partial fields, multilinear representations of matroids, and a
  matrix tree theorem.
\newblock {\em Adv. in Appl. Math.}, 50(1):201--227, 2013.

\bibitem[SW96]{SempleWhittle}
Charles Semple and Geoff Whittle.
\newblock Partial fields and matroid representation.
\newblock {\em Adv. in Appl. Math.}, 17(2):184--208, 1996.

\bibitem[Tut58]{Tutte}
W.~T. Tutte.
\newblock A homotopy theorem for matroids. {I}, {II}.
\newblock {\em Trans. Amer. Math. Soc.}, 88:144--174, 1958.

\bibitem[Vir10]{ViroDequant}
Oleg~Y. Viro.
\newblock Hyperfields for {T}ropical {G}eometry {I}. {H}yperfields and
  dequantization.
\newblock Preprint. Available at {\tt arxiv:math.AG/1006.3034}, 45 pages, 2010.

\bibitem[Vir11]{Viro}
Oleg~Y. Viro.
\newblock On basic concepts of tropical geometry.
\newblock {\em Tr. Mat. Inst. Steklova}, 273(Sovremennye Problemy
  Matematiki):271--303, 2011.

\bibitem[Whi87]{WhiteCG}
Neil White, editor.
\newblock {\em Combinatorial geometries}, volume~29 of {\em Encyclopedia of
  Mathematics and its Applications}.
\newblock Cambridge University Press, Cambridge, 1987.

\bibitem[Whi97]{Whittle}
Geoff Whittle.
\newblock On matroids representable over {${\rm GF}(3)$} and other fields.
\newblock {\em Trans. Amer. Math. Soc.}, 349(2):579--603, 1997.

\end{thebibliography}


\end{document}